\documentclass[11pt,a4paper]{amsart}

\usepackage[utf8]{inputenc}
\usepackage[T1]{fontenc}
\usepackage[margin=1.1in]{geometry}
\usepackage{mathtools,amssymb,mathrsfs}
\usepackage{needspace}
\usepackage[dvipsnames]{xcolor}
\definecolor{linkblue}{RGB}{56,84,124}
\usepackage{hyperref}
\hypersetup{
	colorlinks=true,
	linkcolor=linkblue,
	citecolor=linkblue,
	urlcolor=linkblue
}

\mathtoolsset{showonlyrefs}
\numberwithin{equation}{section}

\newcommand{\IR}{\mathbb R}
\newcommand{\br}{{\normalfont{\boldsymbol r}}}
\newcommand{\dd}{\,\mathrm d}
\newcommand{\dual}[3]{\left\langle #1,#2\right\rangle_{#3}}
\newcommand{\norm}[1]{\left\|#1\right\|}
\newcommand{\divGh}{\operatorname{div}_{\widehat\Gamma}}
\newcommand{\curl}{\operatorname{\mathbf{curl}}}
\newcommand{\ii}{\imath}

\theoremstyle{plain}
\newtheorem{theorem}{Theorem}[section]
\newtheorem{assumption}{Assumption}[section]
\newtheorem{lemma}{Lemma}[section]
\newtheorem{proposition}{Proposition}[section]
\newtheorem{corollary}{Corollary}[section]
\theoremstyle{remark}
\newtheorem{remark}{Remark}[section]

\title[Shape Holomorphy of the Helmholtz BIOs and the Maxwell EFIE]{Shape Holomorphy of the Helmholtz Calder\'on Projector and the Maxwell EFIE on $\mathscr C^{1,1}$ Surfaces and Screens}
\author[Fernando Henr\'iquez]{Fernando Henr\'iquez}
\date{}

\makeatletter
\let\ams@setauthors\@setauthors
\def\@setauthors{%
  \ams@setauthors
  \begin{center}
    \vspace{7pt}
    \normalfont\footnotesize
    Institute for Analysis and Scientific Computing,\\
    Technische Universit\"{a}t Wien\\
    Wiedner Hauptstrasse 8-10, A-1040 Wien, Austria\\[2pt]
    \texttt{fernando.henriquez@asc.tuwien.ac.at}
  \end{center}%
}
\makeatother

\begin{document}
\begin{abstract}
We establish the holomorphic dependence of the Helmholtz Calder\'on
projector on closed $\mathscr C^{1,1}$ surfaces and of the operator
associated with the Maxwell electric field integral equation (EFIE)
on closed and open surfaces of the same regularity.
More precisely, after transporting the boundary integral operators (BIOs)
to fixed reference spaces, we prove that the resulting maps admit bounded
holomorphic extensions to complex neighborhoods of compact sets of
admissible boundary transformations, a property referred to as
\emph{shape holomorphy}.
A key step in our proof consists in extending the 
results in [Lanza de Cristoforis and Rossi,
\emph{Analytic Methods of Analysis and Differential Equations: AMADE 2006}
(2008), pp.~193--220]
and [Dalla Riva, Luzzini, and Musolino,
\emph{Inverse Problems} 38 (2022), 055004]
from Schauder to fractional Sobolev spaces.

For the EFIE, the single layer representation and the surface Piola
transformation give the result in Maxwell trace spaces, whereas the result for 
open surfaces or \emph{screens} is obtained by introducing suitable restriction and extension by zero
operators. As opposed to existing results, we do not rely on an affine-parametric representation
of the boundary deformations, thus allowing for more general parametric shape variations. 

Finally, we establish shape holomorphy
of the \emph{shape-to-solution} map for the EFIE, subject to interior electric
Maxwell nonresonance on closed surfaces but with no such condition on screens.
Under suitable summability and stability assumptions, these shape holomorphy
results provide an analytic basis for the dimension-robust convergence of
interpolation and quadrature methods used in computational uncertainty
quantification (UQ) of scattering problems with
high-dimensional, shape-parametric uncertain geometry.
\end{abstract}
\maketitle

\section{Introduction}

Many computational problems in science and engineering require the repeated
solution of boundary value problems on domains with varying geometry.
Such problems arise in shape optimization, inverse shape identification,
engineering design, and uncertainty quantification (UQ).
In these settings, one is interested in the dependence of the solution,
or of a quantity of interest (QoI), on the shape of the domain.
In many query and real time applications, solving the full boundary value
problem for every admissible geometry may be computationally prohibitive.

Following a parametric approach, families of admissible domains can be
represented through transformations of a fixed reference domain.
These transformations may depend on finitely many parameters, as is often
the case in geometric design, or on countably many parameters, as in models
of spatially distributed boundary uncertainty.
For well posed problems, transporting the solutions to the reference
domain yields a solution map with values in a fixed function space.
Approximating this map efficiently becomes challenging when the number of
parameters is large. For tensor product methods, for instance, the
computational effort grows exponentially with the number of parameters,
leading to the so called curse of dimensionality in the parameter space.

A strategy to overcome this difficulty relies on extending the parametric
maps holomorphically to products of complex domains whose sizes reflect
the influence of the individual parameters \cite{Mujica1986}.
Uniform bounds on these extensions, together with suitable summability
assumptions on the parametrization, allow us to exploit this anisotropic
dependence and obtain approximation rates that are independent of the
parametric dimension \cite{ChkifaCohenSchwab2015}.
This form of parametric holomorphy is used in the analysis of sparse
polynomial interpolation \cite{ChkifaCohenSchwab2014} and higher order
quasi Monte Carlo integration \cite{DickLeGiaSchwab2016}.
It also underlies the analysis of artificial neural network surrogates
for parametric solution maps \cite{SchwabZech2019} and reduced basis
methods \cite{CohenDeVore2016}.

To apply these approximation results to a boundary value problem, one must
establish the required holomorphic dependence on the geometric parameters.
For problems admitting a boundary integral formulation, this leads to the
study of the corresponding boundary integral operators (BIOs)
\cite{ColtonKress2013}.
In this work, we consider the complete Calder\'on projector for the Helmholtz
equation on closed $\mathscr C^{1,1}$ surfaces and the Maxwell electric field
integral equation (EFIE) operator on closed surfaces and screens of the same
regularity. After transport to a fixed reference boundary, these operators
act between fixed fractional Sobolev and Maxwell trace spaces.
The purpose of this work is to establish a holomorphic extension of the
transported operators, in operator norm, to a complex neighborhood of the
admissible boundary transformations. We refer to this property as
\emph{shape holomorphy}. For boundary integral equations (BIEs) that are well
posed, holomorphy of the transported operators and right hand sides implies
holomorphic dependence of the solutions.

\subsection{Previous Work}

The dependence of BIOs on the boundary has been studied extensively
through shape derivatives.
For smooth boundaries and smooth deformations, electromagnetic BIOs
transported to fixed Maxwell trace spaces are infinitely G\^ateaux
differentiable \cite[Theorems~6.2 and~6.3]{CostabelLeLouer2012ShapeII}.
Such differentiability with respect to real boundary perturbations
does not by itself imply the complex Fr\'echet differentiability
required for shape holomorphy.

A different approach starts from the real analytic dependence of
layer potentials on their supporting boundary and density
\cite{LanzaRossi2004}.
For the Helmholtz equation, this dependence is established in
\cite{LanzaRossiHelmholtz2008}
and \cite[Lemma~2.5]{DallaRivaLuzziniMusolino2022}.
For the Laplace equation,
\cite[Corollaries~3.3 and~3.4]{DallaRivaLuzziniMusolino2022ESAIM}
establishes real analyticity and a holomorphic extension of the complete
Calder\'on projector in H\"older spaces on $\mathscr C^{1,\alpha}$
boundaries.
The deformation variable in these analyses is an embedding of a fixed
reference boundary and is not restricted to an affine parametric
representation.
The Helmholtz results cited above are the starting point for our
analysis. They are formulated in H\"older spaces, and their transfer to
the fractional Sobolev trace spaces is a central part of the present work.

Shape holomorphy in fractional Sobolev spaces has also been studied in
two dimensions, for both closed curves and open arcs.
For smooth closed curves, shape holomorphy of the Laplace Calder\'on
projector is established in \cite{HS21}, and the corresponding analysis
for the Helmholtz BIOs is developed in \cite{Henriquez2021}.
For collections of open arcs, \cite{PHJ24} establishes shape holomorphy
in weighted Sobolev spaces.
For parametrically varying Lipschitz boundaries in three dimensions,
\cite{DolzHenriquez2024} establishes shape holomorphy in an $L^2$
setting, allowing polyhedral boundaries.
That analysis does not establish holomorphy of the complete Calder\'on
projector on the fractional Sobolev spaces of Dirichlet and Neumann
traces.
Shape holomorphy has also been established for volume integral equations
in \cite{HenriquezLabarca2026}.

For the Maxwell EFIE on closed $\mathscr C^{1,1}$ surfaces,
\cite[Section~2.2 and Theorem~6.1]{EscapilJerez2026EFIE} establishes
parametric holomorphy of the transported operator on its natural energy
space for affine parametric boundary transformations with countably
many parameters.
Under interior electric nonresonance, the same work establishes
holomorphy of the solution of the EFIE and the far-field, and derives
sparse approximation estimates
\cite[Theorems~6.2, 7.2, and~7.3]{EscapilJerez2026EFIE}.

\Needspace{8\baselineskip}
\subsection{Contributions}

We establish shape holomorphy of the complete Helmholtz Calder\'on
projector on closed $\mathscr C^{1,1}$ surfaces
(Theorem~\ref{thm:c11-helmholtz-calderon-holomorphy}), and of the
Maxwell EFIE operator on closed
$\mathscr C^{1,1}$ surfaces (Theorem~\ref{thm:c11-efie-holomorphy})
and on screens of the same regularity
(Theorem~\ref{thm:c11-screen-holomorphy}).
For each compact family of admissible boundary transformations, the
transported operators admit bounded holomorphic extensions to a complex
neighborhood of that family, with values in the corresponding spaces of
bounded operators on the reference boundary.

Our proof first transfers the existing H\"older analyticity results
for the scalar Helmholtz operators to fractional Sobolev spaces.
The transfer uses Lax's lemma in the form given in
\cite[Theorem~4.13]{Kress2014}, based on
\cite{Lax1954}.
The single layer representation of the EFIE and the surface Piola
transformation then yield shape holomorphy on closed surfaces.
For screens, fixed restriction and extension by zero operators reduce
the holomorphy argument to the closed surface result.

The boundary transformation itself is the variable in our analysis,
rather than the coefficients of a prescribed affine parametrization
as in \cite[Section~2.2 and Theorem~6.1]{EscapilJerez2026EFIE}.
This separates the regularity of the boundary integral operators with
respect to the geometry from the choice of a parametrization.
The same operator result can therefore be used for different
parametrizations, including nonlinear ones, without repeating the
holomorphy argument for each representation.
For analytic parametrizations, parametric holomorphy follows by
composition. More generally, one may consider nonlinear
parametrizations of Gevrey regularity, as in
\cite[Section~2]{DjurdjevacKaarniojaSchillingsZepernick2026}.
If their ranges lie in a compact family of admissible transformations
and their parametric derivative bounds hold in the
$\mathscr C^{1,1}(\widehat\Gamma;\mathbb R^3)$ norm, composition with
the holomorphic operator maps preserves the Gevrey order
\cite[Section~2.4]{HarbrechtSchmidlinSchwab2024}.

We also establish shape holomorphy of the EFIE solutions on closed
surfaces and screens, provided that the data transported to the reference
boundary admit bounded holomorphic extensions
(Corollaries~\ref{cor:c11-closed-solution-holomorphy}
and~\ref{cor:c11-screen-solution-holomorphy}).
For closed surfaces, we exclude interior electric Maxwell eigenvalues;
no such restriction is needed for screens.
In the closed-surface setting, we further establish bounded holomorphic
extensions of the electric far-field
(Corollary~\ref{cor:c11-closed-far-field-holomorphy}),
its pointwise squared magnitude, and its squared $L^2$ norm
(Corollary~\ref{cor:c11-squared-far-field-holomorphy}).

\subsection{Outline}

This work is organized as follows.
Section~\ref{sec:c11-preliminaries} collects the geometric and functional
preliminaries, including the extension of operators from H\"older to
fractional Sobolev spaces.
Section~\ref{sec:c11-helmholtz-holomorphy} establishes shape holomorphy
of the four scalar Helmholtz BIOs and the corresponding Calder\'on projector.
Section~\ref{sec:c11-efie} introduces the Maxwell trace spaces and the
surface Piola transformation, and treats the EFIE on closed surfaces,
its solution, and the electric far-field.
Section~\ref{sec:c11-screens} treats the screen EFIE and its solution,
including the screen spaces and Piola transformation.
Section~\ref{sec:c11-conclusions} contains the concluding remarks.
Technical results and deferred proofs are collected in the appendices.

\subsection{Notation}
\label{sec:c11-notation}
Function and distribution spaces are over $\mathbb C$ unless a real
target space is explicitly specified. In particular, $L^p(\Sigma)$,
$H^s(\Sigma)$, and $\mathscr C^{k,\alpha}(\Sigma)$ consist of
complex-valued functions or distributions; their vector and matrix
counterparts are likewise complex unless stated otherwise.
The notation $\mathscr C^{k,\alpha}(\Sigma;\mathbb R^3)$ and
$\mathscr C^{k,\alpha}(\Sigma;\mathbb C^3)$ specifies spaces of
real and complex vector-valued functions on $\Sigma$, respectively.

For complex Banach spaces $E$ and $F$, $\mathscr L(E,F)$ denotes the
space of bounded complex linear operators from $E$ to $F$, equipped with the
operator norm
\begin{equation}
	\|T\|_{\mathscr L(E,F)}
	\coloneqq \sup_{u\in E\setminus\{0\}}\frac{\|Tu\|_F}{\|u\|_E},
	\quad \forall T\in\mathscr L(E,F).
\end{equation}
We write $\mathscr L_{\text{\normalfont iso}}(E,F)$ for the set of bounded linear
isomorphisms from $E$ to $F$ with bounded inverses.
We write $E'\coloneqq \mathscr L(E,\mathbb C)$ for the continuous
dual. The evaluation pairing
$\langle\cdot,\cdot\rangle_{E'\times E}:E'\times E\to\mathbb C$
is defined by
\begin{equation}
	\dual{f}{u}{E'\times E}\coloneqq f(u),
	\quad \forall f\in E',\quad\forall u\in E.
\end{equation}
This pairing is complex bilinear, with no complex conjugation.
The Sobolev and Maxwell duality pairings use this convention.
Inner products are Hermitian, linear in the first argument and
conjugate linear in the second.
For all $T\in\mathscr L(E,F)$, its dual operator
$T':F'\to E'$ is defined by
\begin{equation}
	\dual{T'f}{u}{E'\times E}
	=\dual{f}{Tu}{F'\times F},
	\quad \forall f\in F',\quad\forall u\in E.
\end{equation}
The map $T\mapsto T'$ from $\mathscr L(E,F)$ to $\mathscr L(F',E')$
is bounded and complex linear.


\section{Preliminaries}
\label{sec:c11-preliminaries}

\subsection{Boundary Transformations}
Throughout this work, let $\widehat{\Gamma}$ be the boundary
of a bounded, simply connected $\mathscr{C}^{1,1}$ domain $\widehat{\text{D}}$ with connected
boundary, i.e.,
$\widehat{\Gamma} \coloneqq \partial \widehat{\text{D}}$, which we
refer to as the reference boundary.
We consider a collection $\{\Gamma_\br\}_{\br \in \mathfrak{T}}$
of boundaries obtained as images of the reference boundary
under $\br \in \mathfrak{T}$, where $\mathfrak{T} \subset\mathscr C^{1,1}(\widehat\Gamma;\IR^3)$
is the set of boundary transformations.
We impose the following assumptions.

\begin{assumption}[Admissible boundary transformations]
\label{ass:c11-deformations}
Let $\mathfrak T\subset\mathscr C^{1,1}(\widehat\Gamma;\IR^3)$ satisfy
the following conditions.
\begin{itemize}
 \item[\normalfont{(i)}] The set $\mathfrak T$ is compact.
 \item[\normalfont{(ii)}] For each $\br\in\mathfrak T$, the image
 $\Gamma_\br\coloneqq \br(\widehat\Gamma)$ is the boundary of a bounded, simply connected
 $\mathscr C^{1,1}$ domain $\normalfont\text{D}_\br$ with connected boundary.
 \item[\normalfont{(iii)}] For each $\br\in\mathfrak T$, the map
 $\br:\widehat\Gamma\to\Gamma_\br\subset\mathbb R^3$ is a $\mathscr C^{1,1}$
 diffeomorphism.
\end{itemize}
\end{assumption}


Let $\widehat{\boldsymbol n}$ and $\boldsymbol n_\br$ denote the
outward unit normals on $\widehat\Gamma$ and $\Gamma_\br$, respectively,
and let $\mathsf I_3$ denote the identity matrix. We set
$\boldsymbol N_\br\coloneqq\boldsymbol n_\br\circ\br$.
At each $\widehat{\mathbf x}\in\widehat\Gamma$, the matrix
$\widehat P(\widehat{\mathbf x})\coloneqq
\mathsf I_3-\widehat{\boldsymbol n}(\widehat{\mathbf x})
\widehat{\boldsymbol n}(\widehat{\mathbf x})^\top$
is the orthogonal projection onto the tangent plane
$T_{\widehat{\mathbf x}}\widehat\Gamma$.
Its action on a vector $\boldsymbol v\in\mathbb C^3$ is
$\widehat P\boldsymbol v=\boldsymbol v-
\widehat{\boldsymbol n}(\widehat{\boldsymbol n}\cdot\boldsymbol v)$.

For each $\br\in\mathfrak T$, we define the surface gradients by
\begin{equation}
	\nabla_{\widehat\Gamma}u
	\coloneqq\widehat P\,(\nabla\widetilde u)|_{\widehat\Gamma}
	\quad\text{and}\quad
	\nabla_{\Gamma_\br}v
	\coloneqq(\mathsf I_3-\boldsymbol n_\br\boldsymbol n_\br^\top)
	             (\nabla\widetilde v)|_{\Gamma_\br},
\end{equation}
for all $u\in\mathscr C^1(\widehat\Gamma)$ and
$v\in\mathscr C^1(\Gamma_\br)$.
Here $\widetilde u$ and $\widetilde v$ are arbitrary local
$\mathscr C^1$ extensions of $u$ and $v$, respectively.
These definitions are independent of the chosen extensions, and the
surface gradients admit bounded extensions
\cite[Section~2 and Proposition~3.6]{BCS2002}
\begin{equation}
	\nabla_\Sigma\in\mathscr L\bigl(
	H^{\frac12}(\Sigma),H^{-\frac12}(\Sigma;\mathbb C^3)\bigr),
	\quad\Sigma\in\{\widehat\Gamma,\Gamma_\br\}.
\end{equation}

For each $\br\in\mathfrak T$, we define its tangential differential by
\begin{equation}
	F_\br\coloneqq D_{\widehat\Gamma}\br
	=D\widetilde\br|_{\widehat\Gamma}\,\widehat P
	\in\mathscr C^{0,1}(\widehat\Gamma;\IR^{3\times3}).
	\label{eq:c11-tangential-differential}
\end{equation}
Here $\widetilde\br\in\mathscr C^{1,1}(U;\mathbb R^3)$ is any local
extension of $\br$, where $U\subset\mathbb R^3$ is open.
The tangential differential is independent of the extension and satisfies
$F_\br\widehat{\boldsymbol n}=0$.
Since $\br$ is a $\mathscr{C}^{1,1}$ diffeomorphism, the restriction of
$F_\br(\widehat{\mathbf x})$ to $T_{\widehat{\mathbf x}}\widehat\Gamma$
is an isomorphism onto $T_{\br(\widehat{\mathbf x})}\Gamma_\br$.
We write $F_\br^{-1}(\widehat{\mathbf x})$ for the inverse from
$T_{\br(\widehat{\mathbf x})}\Gamma_\br$ to
$T_{\widehat{\mathbf x}}\widehat\Gamma$, extended by zero on
$\operatorname{span}\{\boldsymbol N_\br(\widehat{\mathbf x})\}$. Thus
\begin{equation}
	F_\br^{-1}F_\br=\widehat P
	\quad\text{and}\quad
	F_\br^{-1}\boldsymbol N_\br=0.
	\label{eq:c11-tangential-inverse-convention}
\end{equation}
The product $F_\br F_\br^{-1}$ is the projection
$\mathsf I_3-\boldsymbol N_\br\boldsymbol N_\br^\top$ onto
$T_{\br(\widehat{\mathbf x})}\Gamma_\br$ at each
$\widehat{\mathbf x}\in\widehat\Gamma$.

For $q\in\mathscr C^{0,1}(\widehat\Gamma;E)$, with
$E\in\{\mathbb R,\mathbb R^3,\mathbb R^{3\times3},
       \mathbb C,\mathbb C^3,\mathbb C^{3\times3}\}$, we
introduce the Lipschitz seminorm as
\begin{equation}
	|q|_{\mathscr C^{0,1}(\widehat\Gamma;E)}
	\coloneqq \sup_{\substack{\widehat{\mathbf{x}},\widehat{\mathbf{y}}
	                     \in\widehat\Gamma\\
	                   \widehat{\mathbf{x}}\ne\widehat{\mathbf{y}}}}
	  \frac{\|q(\widehat{\mathbf{x}})-q(\widehat{\mathbf{y}})\|_E}
	       {\|\widehat{\mathbf{x}}-\widehat{\mathbf{y}}\|}.
\end{equation}
Here $\|\cdot\|_E$ is the absolute value, Euclidean norm, or Frobenius
norm, respectively.
For all $\boldsymbol u\in\mathscr C^{1,1}(\widehat\Gamma;\mathbb R^3)$
and $\boldsymbol v\in\mathscr C^{1,1}(\widehat\Gamma;\mathbb C^3)$,
we use the norms
\begin{equation}
	\begin{aligned}
	\|\boldsymbol u\|_{\mathscr C^{1,1}(\widehat\Gamma;\mathbb R^3)}
	&\coloneqq \|\boldsymbol u\|_{\mathscr C^1(\widehat\Gamma;\mathbb R^3)}
	  +|D_{\widehat\Gamma}\boldsymbol u|_
	      {\mathscr C^{0,1}(\widehat\Gamma;\mathbb R^{3\times3})},\\
	\|\boldsymbol v\|_{\mathscr C^{1,1}(\widehat\Gamma;\mathbb C^3)}
	&\coloneqq \|\boldsymbol v\|_{\mathscr C^1(\widehat\Gamma;\mathbb C^3)}
	  +|D_{\widehat\Gamma}\boldsymbol v|_
	      {\mathscr C^{0,1}(\widehat\Gamma;\mathbb C^{3\times3})}.
	\end{aligned}
\end{equation}
Here the tangential differentials have the same matrix representation
as in~\eqref{eq:c11-tangential-differential}.
At each point of the reference boundary, let
$\boldsymbol e_1,\boldsymbol e_2$ be orthonormal tangent vectors
with $\boldsymbol e_1\times\boldsymbol e_2=\widehat{\boldsymbol n}$.
The identity
\begin{equation}
	\operatorname{cof}(F_\br)\widehat{\boldsymbol n}
	=(F_\br\boldsymbol e_1)\times(F_\br\boldsymbol e_2)
\end{equation}
shows that the surface Jacobian is
\begin{equation}
	J_{\text S,\br}
	=\bigl\|\operatorname{cof}(F_\br)\widehat{\boldsymbol n}\bigr\|\ne0.
	\label{eq:c11-surface-jacobian}
\end{equation}
The surface Jacobian is positive because $F_\br$ is invertible between
the tangent planes.

\begin{lemma}
\label{lem:c11-jacobian-multiplier}
Let $\mathfrak T$ satisfy Assumption~\ref{ass:c11-deformations}.
For each $\br\in\mathfrak T$, the functions $J_{\text S,\br}$ and
$J_{\text S,\br}^{-1}$ belong to $\mathscr C^{0,1}(\widehat\Gamma)$.
Multiplication by either function is bounded on
$\mathscr C^{0,\alpha}(\widehat\Gamma)$ for $0<\alpha<1$,
and on $H^s(\widehat\Gamma)$ for $-1\leq s\leq1$.
\end{lemma}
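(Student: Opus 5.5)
We first show that $J_{\text S,\br}$ is Lipschitz on $\widehat\Gamma$. Since $\br\in\mathscr C^{1,1}$, the tangential differential $F_\br$ belongs to $\mathscr C^{0,1}(\widehat\Gamma;\IR^{3\times3})$. The unit normal $\widehat{\boldsymbol n}$ is also Lipschitz, because $\widehat\Gamma$ is a $\mathscr C^{1,1}$ boundary. The cofactor matrix $\operatorname{cof}(F_\br)$ has entries that are quadratic polynomials in the entries of $F_\br$. Hence $\operatorname{cof}(F_\br)\widehat{\boldsymbol n}$ is a product of bounded Lipschitz functions, and so it lies in $\mathscr C^{0,1}(\widehat\Gamma;\IR^3)$.

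The Euclidean norm is $1$-Lipschitz, so $J_{\text S,\br}=\|\operatorname{cof}(F_\br)\widehat{\boldsymbol n}\|$ is Lipschitz. For the reciprocal, we need a uniform lower bound $J_{\text S,\br}\ge c_\br>0$. This follows because $J_{\text S,\br}$ is continuous and pointwise nonzero by~\eqref{eq:c11-surface-jacobian}, and $\widehat\Gamma$ is compact. Then
\begin{equation}
	|J_{\text S,\br}^{-1}(\widehat{\mathbf x})-J_{\text S,\br}^{-1}(\widehat{\mathbf y})|
	\le c_\br^{-2}\,|J_{\text S,\br}(\widehat{\mathbf x})-J_{\text S,\br}(\widehat{\mathbf y})|,
\end{equation}
which shows that $J_{\text S,\br}^{-1}\in\mathscr C^{0,1}(\widehat\Gamma)$.

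Next we treat the multiplier properties. Fix $m\in\mathscr C^{0,1}(\widehat\Gamma)$. For $0<\alpha<1$ the product rule gives
\begin{equation}
	|mu|_{\mathscr C^{0,\alpha}}
	\le \|m\|_\infty|u|_{\mathscr C^{0,\alpha}}
	+\operatorname{diam}(\widehat\Gamma)^{1-\alpha}|m|_{\mathscr C^{0,1}}\|u\|_\infty,
\end{equation}
so multiplication by $m$ is bounded on $\mathscr C^{0,\alpha}(\widehat\Gamma)$.

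On $L^2(\widehat\Gamma)$, multiplication by $m$ is trivially bounded by $\|m\|_\infty$. For $s=1$ we work locally. Since $\widehat\Gamma$ is $\mathscr C^{1,1}$, it is covered by finitely many charts that are bi-Lipschitz, indeed $\mathscr C^{1,1}$, and with a Lipschitz partition of unity the $H^1(\widehat\Gamma)$ norm is equivalent to the sum of the $H^1(\IR^2)$ norms of the localized pullbacks. The pulled-back multiplier $m\circ\chi$ is Lipschitz, hence in $W^{1,\infty}$. Rademacher's theorem together with the Leibniz rule $\nabla(mu)=u\nabla m+m\nabla u$ for $W^{1,\infty}\cdot H^1$ products then yields boundedness on $H^1$.

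Interpolation between $L^2$ and $H^1$ gives boundedness on $H^s(\widehat\Gamma)$ for $0\le s\le1$. For $-1\le s<0$ we argue by duality. The space $H^{s}(\widehat\Gamma)$ is the dual of $H^{-s}(\widehat\Gamma)$ with respect to the bilinear $L^2$ pairing, and multiplication by $m$ is symmetric for that pairing. The dual of the bounded operator on $H^{-s}$ therefore extends multiplication by $m$ boundedly to $H^s$.

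The main obstacle is making sure that $H^s(\widehat\Gamma)$ for $|s|\le1$ is well defined on a $\mathscr C^{1,1}$ surface. It is, since the chart transitions are $\mathscr C^{1,1}$ and $H^1$ is invariant under bi-Lipschitz changes of variables. We also need the interpolation and duality identities to hold in this regularity class; we take these as standard, citing the usual Sobolev theory on Lipschitz and $\mathscr C^{1,1}$ boundaries.
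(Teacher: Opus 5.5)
Your proposal is correct and follows essentially the same route as the paper's proof in Appendix~\ref{sec:proof_c11-jacobian-multiplier}: Lipschitz continuity of $\operatorname{cof}(F_\br)\widehat{\boldsymbol n}$ plus the reverse triangle inequality, a positive minimum from compactness giving the $c_\br^{-2}$ Lipschitz bound for $J_{\text S,\br}^{-1}$, the H\"older product estimate, $L^2$ and $H^1$ multiplier bounds followed by interpolation, and duality with respect to the bilinear pairing for $-1\leq s<0$. The only difference is that you justify the $H^1$ bound through charts and Rademacher's theorem, a step the paper states without detail.
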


\begin{proof}
The proof is given in Appendix~\ref{sec:proof_c11-jacobian-multiplier}.
\end{proof}

To study complex Fr\'echet differentiability with respect to the
boundary transformation, we introduce, for $\delta>0$, the complex
neighborhood
\begin{equation}
	\mathfrak T_\delta
	\coloneqq \bigl\{\br\in\mathscr C^{1,1}(\widehat\Gamma;\mathbb C^3):
    \exists \widetilde\br \in \mathfrak{T} \text{ such that }
	\|\br-\widetilde\br\|_{\mathscr C^{1,1}(\widehat\Gamma;\mathbb C^3)}
	<\delta\bigr\}.
\end{equation}

The surface Jacobian, its reciprocal, the transported normal, and the
tangential inverse admit bounded holomorphic extensions to a common
complex neighborhood of $\mathfrak T$.
\begin{lemma}
\label{lem:c11-geometric-holomorphy}
Let $\mathfrak T$ be a set of boundary transformations
satisfying Assumption~\ref{ass:c11-deformations}.
Then there exists $\delta>0$ such that the following assertions hold.
\begin{itemize}
    \item[(i)]
    The maps
    $\mathfrak T\ni\br\mapsto J_{\text S,\br} \in \mathscr C^{0,1}(\widehat\Gamma)$ and
    $\mathfrak T\ni\br\mapsto J_{\text S,\br}^{-1} \in \mathscr C^{0,1}(\widehat\Gamma)$
    admit bounded holomorphic extensions to $\mathfrak T_\delta$.
    \item[(ii)]
    The maps
    $\mathfrak T\ni\br\mapsto\boldsymbol N_\br
      \in\mathscr C^{0,1}(\widehat\Gamma;\mathbb C^3)$
    and
    $\mathfrak T\ni\br\mapsto F_\br^{-1}
      \in\mathscr C^{0,1}(\widehat\Gamma;\mathbb C^{3\times3})$
    admit bounded holomorphic extensions to $\mathfrak T_\delta$.
\end{itemize}
\end{lemma}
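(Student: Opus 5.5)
The plan is to build every map in the lemma from the tangential differential $F_\br$ by polynomial operations, and then use three standard facts. Products in the Banach algebra $\mathscr C^{0,1}(\widehat\Gamma)$ are holomorphic. Inversion of an element bounded away from zero is holomorphic. Composition with a holomorphic function, such as a branch of the square root near the positive reals, is holomorphic.

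First, the map $\br\mapsto F_\br=D_{\widehat\Gamma}\br$ is complex linear and bounded from $\mathscr C^{1,1}(\widehat\Gamma;\mathbb C^3)$ into $\mathscr C^{0,1}(\widehat\Gamma;\mathbb C^{3\times3})$, by the definition of the norm, so it is entire. I take $\boldsymbol e_1,\boldsymbol e_2$ as fixed Lipschitz tangent frames locally; alternatively I use the frame-free formula $\operatorname{cof}(F_\br)\widehat{\boldsymbol n}$, whose entries are quadratic polynomials in the entries of $F_\br$ and $\widehat{\boldsymbol n}$. Since $\widehat{\boldsymbol n}\in\mathscr C^{0,1}$ for a $\mathscr C^{1,1}$ surface, the vector $\boldsymbol m_\br\coloneqq\operatorname{cof}(F_\br)\widehat{\boldsymbol n}$ is a continuous quadratic polynomial map into $\mathscr C^{0,1}(\widehat\Gamma;\mathbb C^3)$, hence entire. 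Next I set $q_\br\coloneqq\boldsymbol m_\br\cdot\boldsymbol m_\br$, using the bilinear dot product without conjugation; this is again entire. For real $\br\in\mathfrak T$ it equals $J_{\text S,\br}^2>0$.

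The key step is a uniform lower bound, and this is where compactness of $\mathfrak T$ enters. The map $\br\mapsto\min_{\widehat\Gamma}J_{\text S,\br}$ is continuous on $\mathfrak T$ in the $\mathscr C^{1}$ topology and positive by \eqref{eq:c11-surface-jacobian}, so it has a minimum $c_0>0$. Thus $q_\br\geq c_0^2$ on $\mathfrak T$, and $\mathfrak T$ is bounded. Since $q$ is locally Lipschitz with constants uniform on bounded sets, there is $\delta>0$ with $\|q_\br-q_{\widetilde\br}\|_{\mathscr C^0}<c_0^2/2$ whenever $\|\br-\widetilde\br\|<\delta$. Consequently, on $\mathfrak T_\delta$ the values of $q_\br$ lie in the disc of radius $c_0^2/2$ about $[c_0^2,C]$. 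This region lies in $\{\operatorname{Re}z>c_0^2/2\}$, where the principal square root and $z\mapsto z^{-1/2}$ are holomorphic.

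I then define $J_{\text S,\br}\coloneqq\sqrt{q_\br}$ and $J_{\text S,\br}^{-1}\coloneqq q_\br^{-1/2}$ on $\mathfrak T_\delta$. Holomorphy into $\mathscr C^{0,1}$ of $\br\mapsto f\circ q_\br$, for $f$ holomorphic on a neighbourhood of the range, is the step I expect to need the most care. I would handle it via the holomorphic functional calculus in the commutative Banach algebra $\mathscr C^{0,1}(\widehat\Gamma)$. The spectrum of $q_\br$ there is its range (after adding a constant, since inverses of Lipschitz functions bounded away from zero are Lipschitz), so the Riesz–Dunford integral over a fixed contour depends holomorphically on $q_\br$. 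Boundedness follows from uniform resolvent bounds along the contour, with $\|q_\br\|_{\mathscr C^{0,1}}$ uniformly bounded on $\mathfrak T_\delta$. This gives (i).

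For (ii), I set $\boldsymbol N_\br\coloneqq J_{\text S,\br}^{-1}\boldsymbol m_\br$, a product of holomorphic maps. This extends the real normal because $\boldsymbol m_\br$ is the outward-oriented normal vector, after fixing orientation by continuity on the connected set $\mathfrak T$. For $F_\br^{-1}$, I use a closed formula that is polynomial in $F_\br$, $\widehat{\boldsymbol n}$, $\boldsymbol N_\br$ and $J_{\text S,\br}^{-1}$. Namely, $F_\br^{-1}=G_\br^{-1}(\mathsf I_3-\boldsymbol N_\br\boldsymbol N_\br^\top)$ with $G_\br\coloneqq F_\br+\boldsymbol N_\br\widehat{\boldsymbol n}^\top$, which maps $\widehat{\boldsymbol n}\mapsto\boldsymbol N_\br$ and agrees with $F_\br$ on tangents. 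Then $G_\br^{-1}=\operatorname{cof}(G_\br)^\top/\det G_\br$, and $\det G_\br=\boldsymbol N_\br\cdot\boldsymbol m_\br=J_{\text S,\br}$ on $\mathfrak T$. This equality persists on $\mathfrak T_\delta$ by the identity theorem, or by direct computation, so it is invertible there. The identities \eqref{eq:c11-tangential-inverse-convention} hold for real $\br$, which confirms that this formula is the extension. Shrinking $\delta$ finitely many times gives a common neighbourhood.
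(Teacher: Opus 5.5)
Your argument is correct in substance, but it is organized differently from the paper's.

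\textbf{How the routes differ.} The paper works locally. Near each $\br_0\in\mathfrak T$ it uses
\begin{itemize}
\item $J_{\text S,\br}=\sqrt{\boldsymbol a_\br\cdot\boldsymbol a_\br}$, with the root chosen positive at $\br_0$;
\item the pseudo-inverse $F_\br^{-1}=(F_\br^\top F_\br+\widehat{\boldsymbol n}\widehat{\boldsymbol n}^\top)^{-1}F_\br^\top$;
\item the normal obtained by normalizing $(\mathsf I_3-F_\br F_\br^{-1})\boldsymbol N_{\br_0}$.
\end{itemize}
It then glues these local extensions by uniqueness of analytic continuation and a finite covering. This forces it to check that real maps in each small ball are still $\mathscr C^{1,1}$ diffeomorphisms.

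You use compactness only once, to get $q_\br\geq c_0^2$ on $\mathfrak T$. This gives $\operatorname{Re}q_\br>c_0^2/2$ on all of $\mathfrak T_\delta$. A single branch of $z\mapsto z^{\pm1/2}$ then yields one global formula with an explicit $\delta$, and no patching or identity theorem is needed. Your Riesz--Dunford argument in the Banach algebra $\mathscr C^{0,1}(\widehat\Gamma)$ also makes rigorous the holomorphy and boundedness of $\br\mapsto\sqrt{q_\br}$ in the Lipschitz norm, which the paper only asserts.

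Your formula $F_\br^{-1}=G_\br^{-1}(\mathsf I_3-\boldsymbol N_\br\boldsymbol N_\br^\top)$ is also correct. Justify $\det G_\br\neq0$ by the direct computation $\det G_\br=\boldsymbol m_\br\cdot\boldsymbol N_\br=J_{\text S,\br}^{-1}q_\br=J_{\text S,\br}$, which holds on all of $\mathfrak T_\delta$. The identity-theorem route does not work as stated, because $\mathfrak T$ itself has empty interior.

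\textbf{The step that needs repair: orientation of the normal.} Assumption~\ref{ass:c11-deformations} does not make $\mathfrak T$ connected, and it does not require $\br$ to preserve orientation. For the antipodal map of the unit sphere, $J_{\text S,\br}^{-1}\boldsymbol m_\br=-\boldsymbol N_\br$.

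In general $J_{\text S,\br}^{-1}\boldsymbol m_\br=\sigma(\br)\boldsymbol N_\br$, with $\sigma(\br)\in\{\pm1\}$ constant on the connected surface $\widehat\Gamma$. Moreover, $\sigma$ is locally constant on $\mathfrak T$. Indeed, the divergence theorem gives $\int_{\widehat\Gamma}\br\cdot\boldsymbol m_\br\,\dd s=3\sigma(\br)|\text{D}_\br|$, which is a continuous function of $\br$ and never vanishes on $\mathfrak T$.

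Hence $\mathfrak T$ splits into the two disjoint compact sets $\sigma^{-1}(\pm1)$, which lie at positive distance. For $\delta$ below half that distance, $\mathfrak T_\delta$ splits into two disjoint open sets. On these, $\boldsymbol N_\br\coloneqq\sigma J_{\text S,\br}^{-1}\boldsymbol m_\br$, with the corresponding constant sign, is the required extension.

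Your formula for $F_\br^{-1}$ is unaffected, since it is invariant under $\boldsymbol N_\br\mapsto-\boldsymbol N_\br$; one then has $\det G_\br=\sigma J_{\text S,\br}$. The paper's local construction avoids this issue, because projecting $\boldsymbol N_{\br_0}$ fixes the orientation automatically.
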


\begin{proof}
The proof is given in Appendix~\ref{sec:proof_c11-geometric-holomorphy}.
\end{proof}

\subsection{Pullback Operator}
\label{sec:c11-pullbacks}
For each $\br\in\mathfrak T$, with $\mathfrak{T}$
satisfying Assumption~\ref{ass:c11-deformations}, we define
\begin{equation}
	\begin{aligned}
	\tau_\br&:\mathscr C^0(\Gamma_\br)\to\mathscr C^0(\widehat\Gamma):
	&\tau_\br f&\coloneqq f\circ\br,\\
	\tau_\br^{-1}&:\mathscr C^0(\widehat\Gamma)\to\mathscr C^0(\Gamma_\br):
	&\tau_\br^{-1}\widehat f&\coloneqq \widehat f\circ\br^{-1}.
	\end{aligned}
\end{equation}

For each $\br\in\mathfrak T$, the surface change of variables gives
\begin{equation}\label{eq:c11-surface-change-of-variables}
	\int_{\Gamma_\br}f(\mathbf{x})\,\dd s_{\mathbf{x}}
	=
    \int_{\widehat\Gamma}f\bigl(\br(\widehat{\mathbf{x}})\bigr)
	J_{\text S,\br}(\widehat{\mathbf{x}})\,
    \dd s_{\widehat{\mathbf{x}}},
\end{equation}
for all $f\in L^1(\Gamma_\br)$.

For $\Sigma \in \{\Gamma_\br,\widehat\Gamma\}$ and $0\leq s\leq1$, we denote by
$\langle\cdot,\cdot\rangle_{\Sigma}$ the complex bilinear duality pairing
between $H^{-s}(\Sigma)$ and $H^s(\Sigma)$,
with $L^2(\Sigma)$ as pivot space.
For regular functions, it agrees with $\int_\Sigma f\phi\,\dd s$,
without complex conjugation.
For vector fields, we use its component-wise extension from
$(H^{-s}(\Sigma))^3\times(H^s(\Sigma))^3$ to $\mathbb C$.

Lemma~\ref{lem:c11-pullback-isomorphisms} shows that, for
$0\leq s\leq1$, the pullback operators $\tau_\br$ and
$\tau_\br^{-1}$ extend uniquely to bounded operators between
$H^s(\Gamma_\br)$ and $H^s(\widehat\Gamma)$.
For $-1\leq s<0$, $f\in H^s(\Gamma_\br)$, and
$\widehat f\in H^s(\widehat\Gamma)$, we define
\begin{equation}
    \begin{aligned}
	\langle\tau_\br f,\widehat\phi\rangle_{\widehat\Gamma}
	&\coloneqq \langle f,\tau_\br^{-1}
	            (J_{\text S,\br}^{-1}\widehat\phi)\rangle_{\Gamma_\br},\\
	\langle\tau_\br^{-1}\widehat f,\phi\rangle_{\Gamma_\br}
	&\coloneqq \langle\widehat f,J_{\text S,\br}\tau_\br\phi
	      \rangle_{\widehat\Gamma},
    \end{aligned}
	\label{eq:c11-distribution-pullbacks}
\end{equation}
for all $\widehat\phi\in H^{-s}(\widehat\Gamma)$ and
$\phi\in H^{-s}(\Gamma_\br)$, respectively.
By the surface change of variables \eqref{eq:c11-surface-change-of-variables}, these definitions agree with
composition for regular densities.

\begin{lemma}
\label{lem:c11-pullback-isomorphisms}
Let $\mathfrak T$ satisfy Assumption~\ref{ass:c11-deformations}.
For each $\br\in\mathfrak T$, the following assertions hold.
\begin{itemize}
 \item[\normalfont{(i)}] For $-1\leq s\leq1$,
 \begin{equation}
 	\tau_\br\in\mathscr L_{\text{\normalfont iso}}
 	\bigl(H^s(\Gamma_\br),H^s(\widehat\Gamma)\bigr)
 	\quad\text{and}\quad
 	\tau_\br^{-1}\in\mathscr L_{\text{\normalfont iso}}
 	\bigl(H^s(\widehat\Gamma),H^s(\Gamma_\br)\bigr).
 \end{equation}
 \item[\normalfont{(ii)}] For $k=0,1$ and $0<\alpha<1$,
 \begin{equation}
 	\tau_\br\in\mathscr L_{\text{\normalfont iso}}
 	\bigl(\mathscr C^{k,\alpha}(\Gamma_\br),
 	      \mathscr C^{k,\alpha}(\widehat\Gamma)\bigr)
 	\quad\text{and}\quad
 	\tau_\br^{-1}\in\mathscr L_{\text{\normalfont iso}}
 	\bigl(\mathscr C^{k,\alpha}(\widehat\Gamma),
 	      \mathscr C^{k,\alpha}(\Gamma_\br)\bigr).
 \end{equation}
\end{itemize}
\end{lemma}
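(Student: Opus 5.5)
We will first treat the Hölder statement (ii), since it is elementary, then the Sobolev statement (i) for $0\le s\le1$, and finally obtain negative orders by duality via the definition \eqref{eq:c11-distribution-pullbacks}.

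\textbf{Hölder spaces.} For (ii) we use that $\br$ and $\br^{-1}$ are bi-Lipschitz, since both are $\mathscr C^{1,1}$ diffeomorphisms between compact surfaces. Hence $|f\circ\br(\widehat{\mathbf x})-f\circ\br(\widehat{\mathbf y})|\le |f|_{\alpha}\,\mathrm{Lip}(\br)^\alpha\|\widehat{\mathbf x}-\widehat{\mathbf y}\|^\alpha$, which gives $k=0$. For $k=1$ the chain rule gives $\nabla_{\widehat\Gamma}(f\circ\br)=F_\br^\top\,(\nabla_{\Gamma_\br}f)\circ\br$. Here $F_\br\in\mathscr C^{0,1}\subset\mathscr C^{0,\alpha}$, and the product of $\mathscr C^{0,\alpha}$ functions is $\mathscr C^{0,\alpha}$, so the case $k=0$ applied to $\nabla_{\Gamma_\br}f$ concludes. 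The same argument applied to $\br^{-1}$ handles $\tau_\br^{-1}$. Since $\tau_\br\tau_\br^{-1}$ and $\tau_\br^{-1}\tau_\br$ are identities, both operators are isomorphisms.

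\textbf{Sobolev spaces, $0\le s\le 1$.} For $s=0$, the change of variables \eqref{eq:c11-surface-change-of-variables} gives $\|\tau_\br f\|_{L^2(\widehat\Gamma)}^2\le \|J_{\text S,\br}^{-1}\|_\infty\|f\|^2_{L^2(\Gamma_\br)}$. Lemma~\ref{lem:c11-jacobian-multiplier} supplies the two-sided bounds on $J_{\text S,\br}$; positivity and continuity on a compact set give $0<c\le J_{\text S,\br}\le C$. For $s=1$ we work first on the dense subspace $\mathscr C^1(\Gamma_\br)$, using the chain rule above together with $F_\br\in L^\infty$; this yields $\|\nabla_{\widehat\Gamma}\tau_\br f\|_{L^2}\lesssim\|\nabla_{\Gamma_\br}f\|_{L^2}$. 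By density, $\tau_\br$ extends uniquely to a bounded operator on $H^1$.

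One point needs care: the $H^1$ norm on a $\mathscr C^{1,1}$ surface must be identified with the $L^2$-plus-surface-gradient norm. This holds on $\mathscr C^{1,1}$ (indeed Lipschitz) surfaces and is consistent with the definition via local charts. For $0<s<1$ we interpolate: $H^s=[L^2,H^1]_s$ on both surfaces. Alternatively, one can use the Sobolev–Slobodeckij norm directly with the bi-Lipschitz bounds $\|\br(\widehat{\mathbf x})-\br(\widehat{\mathbf y})\|\simeq\|\widehat{\mathbf x}-\widehat{\mathbf y}\|$ and the Jacobian bounds; this is more self-contained and avoids identifying interpolation spaces. Symmetric arguments handle $\tau_\br^{-1}$.

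\textbf{Negative orders.} For $-1\le s<0$, the definition \eqref{eq:c11-distribution-pullbacks} expresses $\tau_\br$ as the dual of the operator $\widehat\phi\mapsto\tau_\br^{-1}(J_{\text S,\br}^{-1}\widehat\phi)$ on $H^{-s}$. That operator is bounded by the positive case combined with the multiplier bound of Lemma~\ref{lem:c11-jacobian-multiplier} on $H^{-s}(\widehat\Gamma)$. Hence
\begin{equation}
	\|\tau_\br f\|_{H^s(\widehat\Gamma)}\le\|\tau_\br^{-1}\|_{\mathscr L(H^{-s})}\|J_{\text S,\br}^{-1}\|_{\mathscr L(H^{-s})}\|f\|_{H^s(\Gamma_\br)}.
\end{equation}
The analogous bound holds for $\tau_\br^{-1}$, using multiplication by $J_{\text S,\br}$. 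It remains to check that the two duality definitions are mutually inverse. Indeed, $\langle\tau_\br\tau_\br^{-1}\widehat f,\widehat\phi\rangle=\langle\widehat f,J_{\text S,\br}\tau_\br\tau_\br^{-1}(J_{\text S,\br}^{-1}\widehat\phi)\rangle=\langle\widehat f,\widehat\phi\rangle$, and the other composition is handled likewise. The definitions also agree with composition on $L^2$ by change of variables, so they are consistent with the positive-order extension.

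The main technical point I expect is the $0<s<1$ range on $\mathscr C^{1,1}$ surfaces, where the norm conventions must be matched. Beyond that, the proof is routine.
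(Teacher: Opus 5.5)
Your proposal is correct and follows essentially the same route as the paper's proof in Appendix~\ref{app:c11-pullbacks}: $L^2$ bounds from the change of variables and the two-sided Jacobian bounds, $H^1$ bounds from the tangential chain rule with the bounded coefficients $F_\br$ and $F_\br^{-1}$ together with density, interpolation for $0<s<1$, negative orders as duals of $\widehat\phi\mapsto\tau_\br^{-1}(J_{\text S,\br}^{-1}\widehat\phi)$ and $\phi\mapsto J_{\text S,\br}\tau_\br\phi$, and the H\"older case via $\operatorname{Lip}(\br)^\alpha$ and the chain rule applied to $\br$ and $\br^{-1}$. The only differences are the order of presentation and your optional Slobodeckij alternative for $0<s<1$, which the paper does not use.
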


\begin{proof}
The proof is given in Appendix~\ref{app:c11-pullbacks}.
\end{proof}

\subsection{Extension from H\"older to Sobolev Spaces}
\label{sec:c11-sobolev-extension}
We transfer the Helmholtz analyticity results in
\cite{LanzaRossiHelmholtz2008} and
\cite{DallaRivaLuzziniMusolino2022}
from H\"older to fractional Sobolev operator spaces.
In particular, \cite[Lemma~2.5, items~(i)--(iii)]{DallaRivaLuzziniMusolino2022} establishes
real-analytic dependence on the boundary transformation of the single
layer, double layer, and adjoint double layer BIOs transported to the
reference boundary.
Following the symmetry argument discussed in
\cite[Preface to the Classics Edition, p.~xi]{ColtonKress2013},
we construct a bounded, complex linear extension map using symmetry
with respect to the bilinear pairing
$\langle\cdot,\cdot\rangle_{\widehat\Gamma}$.
Lemma~\ref{lem:c11-sobolev-extension} gives the extension estimate
that transfers holomorphy and uniform bounds to the Sobolev operator norm.

\begin{lemma}
\label{lem:c11-sobolev-extension}
For a given $0<\alpha<1$, let
$\widehat{\mathsf T}\in\mathscr L(\mathscr C^{0,\alpha}(\widehat\Gamma),
\allowbreak\mathscr C^{1,\alpha}(\widehat\Gamma))$ satisfy
\begin{equation}\label{eq:symmetry_Lax}
	\int_{\widehat\Gamma}(\widehat{\mathsf T}u)(\widehat{\mathbf{x}})
	    v(\widehat{\mathbf{x}})\,\dd s_{\widehat{\mathbf{x}}}
	=
	\int_{\widehat\Gamma}u(\widehat{\mathbf{x}})
	    (\widehat{\mathsf T}v)(\widehat{\mathbf{x}})\,
	    \dd s_{\widehat{\mathbf{x}}},
\end{equation}
for all $u,v\in\mathscr C^{0,\alpha}(\widehat\Gamma)$.
Then the following assertions hold.
\begin{itemize}
 \item[\normalfont{(i)}] The operator $\widehat{\mathsf T}$ has a unique
 bounded extension
 \begin{equation}
 	\mathcal E(\widehat{\mathsf T}):
 	H^{-\frac{1}{2}}(\widehat\Gamma)\to H^{\frac{1}{2}}(\widehat\Gamma).
 \end{equation}
 \item[\normalfont{(ii)}] For operators satisfying \eqref{eq:symmetry_Lax}, the extension
 operation $\mathcal E$ is complex linear and satisfies
 \begin{equation}
 	\|\mathcal E(\widehat{\mathsf T})\|_
 	    {\mathscr L(H^{-\frac{1}{2}}(\widehat\Gamma),
 	                H^{\frac{1}{2}}(\widehat\Gamma))}
 	\leq C
 	\|\widehat{\mathsf T}\|_
 	    {\mathscr L(\mathscr C^{0,\alpha}(\widehat\Gamma),
 	                \mathscr C^{1,\alpha}(\widehat\Gamma))},
 \end{equation}
 where $C=C(\widehat\Gamma,\alpha)$ is given explicitly
 in~\eqref{eq:c11-sobolev-extension-constant}.
 \item[\normalfont{(iii)}] 
For each $u\in\mathscr C^{0,\alpha}(\widehat\Gamma)$, it holds that
 \begin{equation}
 \label{eq:c11-sobolev-extension-agreement}
 	\mathcal E(\widehat{\mathsf T})u=\widehat{\mathsf T}u.
 \end{equation}
\end{itemize}
\end{lemma}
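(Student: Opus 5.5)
The plan is a Lax-type duality and interpolation argument in the spirit of \cite[Theorem~4.13]{Kress2014}. First, I would extend $\widehat{\mathsf T}$ to $L^2(\widehat\Gamma)$ by symmetry. The bilinear form $(u,v)\mapsto\int_{\widehat\Gamma}(\widehat{\mathsf T}u)v\dd s$ is symmetric by \eqref{eq:symmetry_Lax}. Moreover, $\widehat{\mathsf T}$ is bounded from $\mathscr C^{0,\alpha}(\widehat\Gamma)$ into $\mathscr C^{1,\alpha}(\widehat\Gamma)\hookrightarrow\mathscr C^{0,\alpha}(\widehat\Gamma)$. Lax's lemma then shows that $\widehat{\mathsf T}$ is bounded on $(\mathscr C^{0,\alpha}(\widehat\Gamma),\|\cdot\|_{L^2})$, with
\begin{equation}
\|\widehat{\mathsf T}u\|_{L^2}\le \sqrt{|\widehat\Gamma|}\,\|\widehat{\mathsf T}\|_{\mathscr L(\mathscr C^{0,\alpha},\mathscr C^{0,\alpha})}\|u\|_{L^2}.
\end{equation}
The Lax argument is the iteration
\begin{equation}
\|\widehat{\mathsf T}u\|_{L^2}^{2}=\int u\,\widehat{\mathsf T}\overline{\widehat{\mathsf T}u}\dd s,
\end{equation}
combined with Cauchy--Schwarz and powers of the self-adjoint operator. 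Because the pairing has no conjugation, I would apply it to $\widehat{\mathsf T}^\ast u=\overline{\widehat{\mathsf T}\overline u}$. This operator is the $L^2$-adjoint under the symmetry and maps $\mathscr C^{0,\alpha}$ boundedly with the same norm. Since $\mathscr C^{0,\alpha}(\widehat\Gamma)$ is dense in $L^2(\widehat\Gamma)$, $\widehat{\mathsf T}$ extends to $L^2$.

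Second, I would lift the estimate one derivative. I would show that $\widehat{\mathsf T}$ is bounded from $L^2(\widehat\Gamma)$ into $H^1(\widehat\Gamma)$ with a norm controlled by $\|\widehat{\mathsf T}\|_{\mathscr L(\mathscr C^{0,\alpha},\mathscr C^{1,\alpha})}$. One option is to use duality again: $\nabla_{\widehat\Gamma}\widehat{\mathsf T}$ maps $\mathscr C^{0,\alpha}$ to $\mathscr C^{0,\alpha}$, and I would pair it against test fields through the symmetry. The cleaner route is to obtain $H^{-1}\to L^2$ boundedness by duality, since the bilinear symmetry gives $\widehat{\mathsf T}'=\widehat{\mathsf T}$. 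The operator is then bounded $L^2\to H^1$ if and only if it is bounded $H^{-1}\to L^2$. The natural substitute is to run Lax's lemma with $H^1$ in place of $L^2$, using the Hilbert space $H^1(\widehat\Gamma)$ and the dense subspace $\mathscr C^{1,\alpha}$. I expect this to need $\widehat{\mathsf T}:\mathscr C^{1,\alpha}\to\mathscr C^{1,\alpha}$ together with symmetry with respect to the $H^1$ inner product, which we do not have.

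For this reason I plan the argument around the endpoint pair $H^{-1}\to L^2$ and $L^2\to H^1$, obtained via the weaker estimate
\begin{equation}
\|\widehat{\mathsf T}u\|_{H^1}\le\|\widehat{\mathsf T}u\|_{W^{1,\infty}}\le C\|\widehat{\mathsf T}\|\,\|u\|_{\mathscr C^{0,\alpha}},
\end{equation}
interpolated against the $L^2$ bound. Concretely, I would prove $\widehat{\mathsf T}:L^2\to H^{1}$ by showing $\widehat{\mathsf T}:L^2\to \mathscr C^{0}$, through Lax applied to $\widehat{\mathsf T}^2$. Failing that, I would settle for the endpoints $H^{-s}\to H^{s}$ with $s$ small and bootstrap. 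This step is the main obstacle, and I expect the explicit constant \eqref{eq:c11-sobolev-extension-constant} to come from whichever combination succeeds.

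Third, I would conclude by interpolation. Let $S$ denote the extension. From $L^2\to H^1$ and its dual $H^{-1}\to L^2$, obtained from the symmetry $S'=S$ with $L^2$ as pivot, complex interpolation of the scales $[H^{-1},L^2]_{1/2}=H^{-1/2}$ and $[L^2,H^1]_{1/2}=H^{1/2}$ yields $\mathcal E(\widehat{\mathsf T})\in\mathscr L(H^{-1/2},H^{1/2})$. Its norm is bounded by the geometric mean of the endpoint norms, and hence by $C\|\widehat{\mathsf T}\|$.

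Uniqueness follows from density of $\mathscr C^{0,\alpha}(\widehat\Gamma)$ in $H^{-1/2}(\widehat\Gamma)$, which also gives (iii). Complex linearity of $\mathcal E$ follows from uniqueness, since $\mathcal E(a\widehat{\mathsf T}_1+b\widehat{\mathsf T}_2)$ and $a\mathcal E(\widehat{\mathsf T}_1)+b\mathcal E(\widehat{\mathsf T}_2)$ agree on the dense subspace.
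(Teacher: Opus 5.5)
There is a genuine gap, and it is exactly the step you flag as the main obstacle. Your first step (Lax's lemma in $L^2(\widehat\Gamma)$ with the conjugated operator $\overline{\widehat{\mathsf T}\,\overline{\cdot}}$) and your final bookkeeping (density, uniqueness, (iii), linearity of $\mathcal E$) are fine. However, they only give an estimate of order $0$. The content of the lemma is an estimate of order $-1$ whose domain is a negative-order space, and none of your routes produces it.

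\begin{itemize}
\item Interpolating $\mathscr C^{0,\alpha}\to W^{1,\infty}\subset H^1$ against $L^2\to L^2$ gives domains $[\mathscr C^{0,\alpha}(\widehat\Gamma),L^2(\widehat\Gamma)]_\theta\subset L^2(\widehat\Gamma)$, never $L^2$ itself or $H^{-s}$.
\item Lax's lemma only yields bounds in inner-product norms, so it cannot give $L^2\to\mathscr C^0$. Such a bound would not imply $H^1$ anyway.
\item Interpolating $H^{-s}\to H^{s}$ for small $s$ with $L^2\to L^2$ never raises $s$, so there is nothing to bootstrap.
\end{itemize}

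More fundamentally, the endpoint pair $L^2\to H^1$, $H^{-1}\to L^2$ on which your interpolation rests is not implied by the hypotheses. Consider a symmetric operator that couples one coarse wavelet to very many fine-scale wavelets with small coefficients. It can keep the weighted row sums that control the $\mathscr C^{0,\alpha}\to\mathscr C^{1,\alpha}$ norm (via the wavelet characterization of H\"older spaces) uniformly bounded, while its $L^2\to H^1$ norm blows up. The estimate that does survive is the balanced one, $H^{-\frac12}\to H^{\frac12}$, and it has to be obtained directly.

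The missing idea is to apply Lax's lemma in $H^{-\frac12}(\widehat\Gamma)$ itself, after absorbing the order $-1$ into a fixed reference operator. The paper uses the Laplace single layer BIO $\widehat{\mathsf V}_0$. It has a real symmetric kernel and is coercive. It is an isomorphism both from $\mathscr C^{0,\alpha}(\widehat\Gamma)$ to $\mathscr C^{1,\alpha}(\widehat\Gamma)$ and from $H^{-\frac12}(\widehat\Gamma)$ to $H^{\frac12}(\widehat\Gamma)$. Hence $(u,v)_0\coloneqq\langle u,\widehat{\mathsf V}_0\overline v\rangle_{\widehat\Gamma}$ is an inner product equivalent to the $H^{-\frac12}$ norm and dominated by the H\"older norm.

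Then $\widehat{\mathsf K}\coloneqq\widehat{\mathsf V}_0^{-1}\widehat{\mathsf T}$ is bounded on $\mathscr C^{0,\alpha}(\widehat\Gamma)$. The bilinear symmetry of $\widehat{\mathsf T}$ and of $\widehat{\mathsf V}_0$ gives $(\widehat{\mathsf K}u,v)_0=(u,\overline{\widehat{\mathsf K}\,\overline v})_0$. Lax's lemma therefore bounds $\widehat{\mathsf K}$ in the energy norm by $\|\widehat{\mathsf K}\|_{\mathscr L(\mathscr C^{0,\alpha}(\widehat\Gamma),\mathscr C^{0,\alpha}(\widehat\Gamma))}$. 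The factorization $\widehat{\mathsf T}=\widehat{\mathsf V}_0\widehat{\mathsf K}$ then gives the $H^{-\frac12}\to H^{\frac12}$ bound at once, with the constant \eqref{eq:c11-sobolev-extension-constant} built from norms of $\widehat{\mathsf V}_0$ and $\widehat{\mathsf V}_0^{-1}$. No interpolation is needed: your first step used the right tool, but in the wrong inner product.
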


\begin{proof}
We first prove a Sobolev estimate for $\widehat{\mathsf T}$ on its
original domain $\mathscr C^{0,\alpha}(\widehat\Gamma)$, and then
extend it by density. For this estimate, we use the Laplace single
layer BIO to define an equivalent norm on
$H^{-\frac12}(\widehat\Gamma)$.
Let $\widehat{\mathsf V}_0$ denote the Laplace single layer BIO on
$\widehat\Gamma$, defined, for $u\in\mathscr C^0(\widehat\Gamma)$, by 
\begin{equation}
	(\widehat{\mathsf V}_0u)(\widehat{\mathbf{x}})
	\coloneqq
	\int_{\widehat\Gamma}
	\frac{u(\widehat{\mathbf{y}})}
	{4\pi\|\widehat{\mathbf{x}}-\widehat{\mathbf{y}}\|}
	\,\dd s_{\widehat{\mathbf{y}}},
	\quad\forall\widehat{\mathbf{x}}\in\widehat\Gamma,
\end{equation}
It is a symmetric isomorphism from
$\mathscr C^{0,\alpha}(\widehat\Gamma)$
to $\mathscr C^{1,\alpha}(\widehat\Gamma)$ for $0<\alpha<1$
\cite[Lemma~2.5]{BonnaillieDallaRivaDambrineMusolino2018}.
Its extension to the Sobolev spaces is a bounded, positive and coercive
isomorphism from $H^{-\frac12}(\widehat\Gamma)$ to
$H^{\frac12}(\widehat\Gamma)$, see e.g.,
\cite[Corollary~8.13]{McLean2000}.
This extension agrees with $\widehat{\mathsf V}_0$ on
$\mathscr C^{0,\alpha}(\widehat\Gamma)$.

The operator $\widehat{\mathsf V}_0$ is symmetric and coercive,
thus implying that
\begin{equation}
	(u,v)_0\coloneqq
	\langle u,\widehat{\mathsf V}_0\overline v\rangle_{\widehat\Gamma},
\end{equation}
for all $u,v\in H^{-\frac12}(\widehat\Gamma)$, defines an inner product in $ H^{-\frac{1}{2}}(\widehat\Gamma)$
and an induced norm $\|u\|_0\coloneqq(u,u)_0^{1/2}$ that is equivalent to
$\|u\|_{H^{-\frac{1}{2}}(\widehat\Gamma)}$.
The embedding of $\mathscr C^{0,\alpha}(\widehat\Gamma)$ in
$H^{-\frac12}(\widehat\Gamma)$ is continuous.

Set $\widehat{\mathsf K}\coloneqq
\widehat{\mathsf V}_0^{-1}\widehat{\mathsf T}
\in\mathscr L(\mathscr C^{0,\alpha}(\widehat\Gamma),
              \mathscr C^{0,\alpha}(\widehat\Gamma))$, and let
$\widehat{\mathsf J}$ be conjugation on
$\mathscr C^{0,\alpha}(\widehat\Gamma)$.
Symmetry of $\widehat{\mathsf T}$ and the real symmetric kernel of
$\widehat{\mathsf V}_0$ give
\begin{equation}
	(\widehat{\mathsf K}u,v)_0
	=\langle\overline v,\widehat{\mathsf T}u\rangle_{\widehat\Gamma}
	=\langle u,\widehat{\mathsf T}\overline v\rangle_{\widehat\Gamma}
	=(u,\widehat{\mathsf J}\widehat{\mathsf K}\widehat{\mathsf J}v)_0,
\end{equation}
for all $u,v\in\mathscr C^{0,\alpha}(\widehat\Gamma)$.
Conjugation is isometric on $\mathscr C^{0,\alpha}(\widehat\Gamma)$.
Hence $\widehat{\mathsf J}\widehat{\mathsf K}\widehat{\mathsf J}$
is bounded and complex linear, with the same operator norm as
$\widehat{\mathsf K}$.
We apply Lax's lemma (Theorem~\ref{thm:c11-operator-extension}) with
$X=Y=\mathscr C^{0,\alpha}(\widehat\Gamma)$, the inner product
$(\cdot,\cdot)_0$, $A=\widehat{\mathsf K}$, and
$B=\widehat{\mathsf J}\widehat{\mathsf K}\widehat{\mathsf J}$,
to obtain
\begin{equation}
	\|\widehat{\mathsf K}u\|_0
	\leq\|\widehat{\mathsf K}\|_
	 {\mathscr L(\mathscr C^{0,\alpha}(\widehat\Gamma),
	             \mathscr C^{0,\alpha}(\widehat\Gamma))}\|u\|_0,
	\quad \forall u\in\mathscr C^{0,\alpha}(\widehat\Gamma).
\end{equation}
Since $\widehat{\mathsf T}u=\widehat{\mathsf V}_0\widehat{\mathsf K}u$
for all $u\in\mathscr C^{0,\alpha}(\widehat\Gamma)$, boundedness of
$\widehat{\mathsf V}_0:H^{-\frac12}(\widehat\Gamma)\to
H^{\frac12}(\widehat\Gamma)$ and norm equivalence give
\begin{equation}
\label{eq:c11-original-operator-sobolev-bound}
	\begin{aligned}
	\|\widehat{\mathsf T}u\|_{H^{\frac12}(\widehat\Gamma)}
	&=\|\widehat{\mathsf V}_0\widehat{\mathsf K}u\|_
	    {H^{\frac12}(\widehat\Gamma)}\\
	&\leq C\|\widehat{\mathsf K}u\|_0
	 \leq C\|\widehat{\mathsf K}\|_
	 {\mathscr L(\mathscr C^{0,\alpha}(\widehat\Gamma),
	             \mathscr C^{0,\alpha}(\widehat\Gamma))}\|u\|_0\\
	&\leq C(\widehat\Gamma,\alpha)\|\widehat{\mathsf T}\|_
	 {\mathscr L(\mathscr C^{0,\alpha}(\widehat\Gamma),
	             \mathscr C^{1,\alpha}(\widehat\Gamma))}
	 \|u\|_{H^{-\frac12}(\widehat\Gamma)}.
	\end{aligned}
\end{equation}
The last inequality also uses boundedness of
$\widehat{\mathsf V}_0^{-1}:
\mathscr C^{1,\alpha}(\widehat\Gamma)\to
\mathscr C^{0,\alpha}(\widehat\Gamma)$.
The constant in the last line of
\eqref{eq:c11-original-operator-sobolev-bound} reads
\begin{equation}
\label{eq:c11-sobolev-extension-constant}
\begin{aligned}
	C(\widehat\Gamma,\alpha)\coloneqq
	&
	\|\widehat{\mathsf V}_0\|^{\frac32}_{\mathscr L(H^{-\frac12}(\widehat\Gamma),H^{\frac12}(\widehat\Gamma))}
	\|\widehat{\mathsf V}_0^{-1}\|^{\frac12}_{\mathscr L(H^{\frac12}(\widehat\Gamma),H^{-\frac12}(\widehat\Gamma))}
	\|\widehat{\mathsf V}_0^{-1}\|_
	{\mathscr L(\mathscr C^{1,\alpha}(\widehat\Gamma),
     \mathscr C^{0,\alpha}(\widehat\Gamma))},
\end{aligned}
\end{equation}
which only depends on $\widehat\Gamma$ and $\alpha$.

Since $\widehat{\mathsf V}_0$ is the Laplace single layer BIO on the
fixed reference boundary, this constant depends only on $\alpha$ and
$\widehat\Gamma$, and is independent of $\widehat{\mathsf T}$ and $u$.
Therefore, \eqref{eq:c11-original-operator-sobolev-bound} bounds the original
operator $\widehat{\mathsf T}$ in the fractional Sobolev norms,
with the constant given in \eqref{eq:c11-sobolev-extension-constant}.

By density of $\mathscr C^{0,\alpha}(\widehat\Gamma)$ in
$H^{-\frac12}(\widehat\Gamma)$,
estimate~\eqref{eq:c11-original-operator-sobolev-bound} gives a unique
bounded extension
$\mathcal E(\widehat{\mathsf T}):H^{-\frac12}(\widehat\Gamma)
\to H^{\frac12}(\widehat\Gamma)$
satisfying~\eqref{eq:c11-sobolev-extension-agreement}, with the same norm bound.
Uniqueness also implies complex linearity of $\mathcal E$.
\end{proof}

\begin{remark}
A similar extension argument was also used in \cite[Lemma~1.2 and Theorem~5.15]{Henriquez2021} to transfer
bounds for periodic Helmholtz BIOs in two dimensions from spaces of
continuous functions to $L^2$.
\end{remark}

\section{Shape Holomorphy of the Helmholtz Calder\'on Projector}
\label{sec:c11-helmholtz-holomorphy}

We establish shape holomorphy of the  
Helmholtz Calder\'on projectors with respect to $\mathscr C^{1,1}$ boundary
transformations. We transport the four scalar BIOs to the fixed reference
boundary $\widehat\Gamma$ and prove their holomorphic dependence upon domain
deformations as bounded linear operators acting on fractional Sobolev
spaces. We use Lemma~\ref{lem:c11-sobolev-extension} to pass from H\"older
to fractional Sobolev spaces. We obtain shape holomorphy of the adjoint
double-layer operator by duality, and of the hypersingular operator
through its representation in terms of the single-layer operator and
surface differential operators. The single layer result also
provides the main ingredient for the Maxwell EFIE analysis on closed and
open surfaces in Sections~\ref{sec:c11-efie} and~\ref{sec:c11-screens},
respectively.

\subsection{The Helmholtz Calder\'on Projector}
\label{sec:c11-scalar-calderon-setting}

For an open set $\Omega\subset\mathbb R^3$, we write
$u\in H^1_{\mathrm{loc}}(\overline\Omega)$ when
$\chi u\in H^1(\Omega)$ for every $\chi\in\mathscr C_c^\infty(\mathbb R^3)$.
We use the same cutoff convention for $L^2_{\mathrm{loc}}(\overline\Omega)$;
the closure includes regularity up to the boundary.

Throughout this work, the wavenumber $\kappa>0$ is fixed.
The outgoing fundamental solution of
$-\Delta-\kappa^2$ is
\begin{equation}
	G^{(\kappa)}(\mathbf{x},\mathbf{y})
	\coloneqq 
	\frac{\exp(\ii\kappa\|\mathbf{x}-\mathbf{y}\|)}
	     {4\pi\|\mathbf{x}-\mathbf{y}\|},
	\quad \mathbf{x},\mathbf{y}\in\mathbb R^3
	\text{ with }\mathbf{x}\ne\mathbf{y}.
\end{equation}

For each $\br\in\mathfrak T$, the interior Dirichlet and Neumann traces
are bounded linear operators
$\gamma_0^-:H^1(\text{D}_\br)\to H^{\frac12}(\Gamma_\br)$ and
$\gamma_1^-:H^1(\text{D}_\br;\Delta)\to H^{-\frac12}(\Gamma_\br)$,
respectively. The Dirichlet trace is surjective. For their definitions
and mapping properties, see \cite[Theorem~3.37 and Lemma~4.3]{McLean2000}.

The exterior traces on $\Gamma_\br$ are continuous linear maps
\begin{equation}
	\gamma_0^+:H^1_{\mathrm{loc}}(\mathbb R^3\setminus\text{D}_\br)
	\to H^{\frac12}(\Gamma_\br)
	\quad\text{and}\quad
	\gamma_1^+:H^1_{\mathrm{loc}}(\mathbb R^3\setminus\text{D}_\br;\Delta)
	\to H^{-\frac12}(\Gamma_\br).
\end{equation}
The exterior Dirichlet trace is also surjective. Both Neumann traces
use the normal $\boldsymbol n_\br$
pointing out of $\text{D}_\br$.

Using the kernel $G^{(\kappa)}$ above, we define the scalar potentials by
\begin{equation}
	\begin{aligned}
	(\mathsf S_{\Gamma_\br}^{(\kappa)}\mu)(\mathbf x)
	&\coloneqq\int_{\Gamma_\br} G^{(\kappa)}(\mathbf x,\mathbf y)\mu(\mathbf y)\,\dd s_{\mathbf y},\\
	(\mathsf D_{\Gamma_\br}^{(\kappa)}u)(\mathbf x)
	&\coloneqq\int_{\Gamma_\br} \partial_{\boldsymbol n_\br(\mathbf y)}
	    G^{(\kappa)}(\mathbf x,\mathbf y)u(\mathbf y)\,\dd s_{\mathbf y},
	\end{aligned}
	\label{eq:c11-scalar-potentials}
\end{equation}
for all $\mathbf x\notin\Gamma_\br$,
$\mu\in H^{-\frac12}(\Gamma_\br)$, and $u\in H^{\frac12}(\Gamma_\br)$.
For nonsmooth densities, these integrals denote the corresponding
Sobolev duality pairings. The potentials solve $-\Delta u-\kappa^2u=0$
in $\mathbb R^3\setminus\Gamma_\br$. Their traces define the four scalar BIOs
\begin{equation}
	\begin{aligned}
	\mathsf V_{\Gamma_\br}^{(\kappa)}
	&=\gamma_0^\pm\mathsf S_{\Gamma_\br}^{(\kappa)}
		:H^{-\frac12}(\Gamma_\br)\to H^{\frac12}(\Gamma_\br),\\
	\mathsf K_{\Gamma_\br}^{(\kappa)}
	&=\gamma_0^\pm\mathsf D_{\Gamma_\br}^{(\kappa)}\mp\tfrac12\mathsf I
		:H^{\frac12}(\Gamma_\br)\to H^{\frac12}(\Gamma_\br),\\
	\mathsf K_{\Gamma_\br}^{(\kappa)\prime}
	&=\gamma_1^\pm\mathsf S_{\Gamma_\br}^{(\kappa)}\pm\tfrac12\mathsf I
		:H^{-\frac12}(\Gamma_\br)\to H^{-\frac12}(\Gamma_\br),\\
	\mathsf W_{\Gamma_\br}^{(\kappa)}
	&=-\gamma_1^\pm\mathsf D_{\Gamma_\br}^{(\kappa)}
		:H^{\frac12}(\Gamma_\br)\to H^{-\frac12}(\Gamma_\br).
	\end{aligned}
	\label{eq:c11-scalar-bios}
\end{equation}
These operators are bounded. The jump relations and the representation
formula are recalled in \cite[Chapters~6 and~7]{McLean2000}
and \cite[Sections~6.6 and~6.9]{Steinbach2008}.
The prime in $\mathsf K_{\Gamma_\br}^{(\kappa)\prime}$ denotes the
dual of $\mathsf K_{\Gamma_\br}^{(\kappa)}$ at the same wavenumber.

If $u\in H^1(\text{D}_\br)$ solves the Helmholtz equation weakly in $\text{D}_\br$,
its representation is
$u=\mathsf S_{\Gamma_\br}^{(\kappa)}\gamma_1^-u
-\mathsf D_{\Gamma_\br}^{(\kappa)}\gamma_0^-u$ in $\text{D}_\br$.
Taking the two traces gives the interior Calder\'on projector
\begin{equation}
	\mathsf C_{\Gamma_\br}^{(\kappa),-}
	\coloneqq
	\begin{pmatrix}
	\tfrac12\mathsf I-\mathsf K_{\Gamma_\br}^{(\kappa)}
		&\mathsf V_{\Gamma_\br}^{(\kappa)}\\
	\mathsf W_{\Gamma_\br}^{(\kappa)}
		&\tfrac12\mathsf I+\mathsf K_{\Gamma_\br}^{(\kappa)\prime}
	\end{pmatrix}
	\quad\text{and}\quad
	\mathsf C_{\Gamma_\br}^{(\kappa),+}
	\coloneqq\mathsf I-\mathsf C_{\Gamma_\br}^{(\kappa),-}.
	\label{eq:c11-physical-helmholtz-projectors}
\end{equation}
They are complementary bounded projectors on
$H^{\frac12}(\Gamma_\br)\times H^{-\frac12}(\Gamma_\br)$.
Their ranges consist, respectively, of the Cauchy data of interior
Helmholtz solutions in $H^1(\text{D}_\br)$ and exterior Helmholtz solutions
that satisfy the Sommerfeld radiation condition and belong to
$H^1_{\mathrm{loc}}(\mathbb R^3\setminus\text{D}_\br)$.

For each $\br\in\mathfrak T$, we transport the BIOs to the reference
boundary $\widehat\Gamma$ using the pullback $\tau_\br$ introduced in
Subsection~\ref{sec:c11-pullbacks}. The transported operators are
\begin{equation}
	\begin{aligned}
	\widehat{\mathsf V}_\br^{(\kappa)}
	&\coloneqq\tau_\br\mathsf V_{\Gamma_\br}^{(\kappa)}\tau_\br^{-1}
	:H^{-\frac12}(\widehat\Gamma)\to H^{\frac12}(\widehat\Gamma),\\
	\widehat{\mathsf K}_\br^{(\kappa)}
	&\coloneqq\tau_\br\mathsf K_{\Gamma_\br}^{(\kappa)}\tau_\br^{-1}
	:H^{\frac12}(\widehat\Gamma)\to H^{\frac12}(\widehat\Gamma),\\
	\widehat{\mathsf K}_\br^{(\kappa)\prime}
	&\coloneqq\tau_\br\mathsf K_{\Gamma_\br}^{(\kappa)\prime}\tau_\br^{-1}
	:H^{-\frac12}(\widehat\Gamma)\to H^{-\frac12}(\widehat\Gamma),\\
	\widehat{\mathsf W}_\br^{(\kappa)}
	&\coloneqq\tau_\br\mathsf W_{\Gamma_\br}^{(\kappa)}\tau_\br^{-1}
	:H^{\frac12}(\widehat\Gamma)\to H^{-\frac12}(\widehat\Gamma).
	\end{aligned}
	\label{eq:c11-transported-scalar-bios}
\end{equation}
The dual operator on the reference boundary is
\begin{equation}
	\bigl(\widehat{\mathsf K}_\br^{(\kappa)}\bigr)'
	=J_{\text S,\br}
	  \widehat{\mathsf K}_\br^{(\kappa)\prime}
	  J_{\text S,\br}^{-1}.
	\label{eq:c11-scalar-pullback-duality}
\end{equation}
Here $\bigl(\widehat{\mathsf K}_\br^{(\kappa)}\bigr)'$ denotes the
dual with respect to
$\langle\cdot,\cdot\rangle_{\widehat\Gamma}$.
The Jacobian factors follow from the surface change of variables.
On $H^{\frac12}(\widehat\Gamma)\times H^{-\frac12}(\widehat\Gamma)$,
the transported projectors are
\begin{equation}
	\widehat{\mathsf C}_\br^{(\kappa),-}
	=\begin{pmatrix}
	\tfrac12\mathsf I_{\widehat\Gamma}-\widehat{\mathsf K}_\br^{(\kappa)}
		&\widehat{\mathsf V}_\br^{(\kappa)}\\
	\widehat{\mathsf W}_\br^{(\kappa)}
		&\tfrac12\mathsf I_{\widehat\Gamma}+\widehat{\mathsf K}_\br^{(\kappa)\prime}
	\end{pmatrix}
	\quad\text{and}\quad
	\widehat{\mathsf C}_\br^{(\kappa),+}
	=\mathsf I_{\widehat\Gamma}-\widehat{\mathsf C}_\br^{(\kappa),-}.
	\label{eq:c11-transported-helmholtz-projectors}
\end{equation}
For each $\br \in \mathfrak{T}$, the transported and physical projectors
satisfy
\begin{equation}
	\widehat{\mathsf C}_\br^{(\kappa),\pm}
	=\begin{pmatrix}\tau_\br&0\\0&\tau_\br\end{pmatrix}
	\mathsf C_{\Gamma_\br}^{(\kappa),\pm}
\begin{pmatrix}\tau_\br^{-1}&0\\0&\tau_\br^{-1}\end{pmatrix}.
\end{equation}

\begin{theorem}[Shape holomorphy of the Helmholtz Calder\'on projector]
\label{thm:c11-helmholtz-calderon-holomorphy}
Let $\mathfrak T$ satisfy Assumption~\ref{ass:c11-deformations}.
Then there exists $\delta>0$ such that the maps
\begin{equation}
	\widehat{\mathcal C}^{(\kappa),\pm}
	:\mathfrak T\to
	\mathscr L\Bigl(
	H^{\frac12}(\widehat\Gamma)\times H^{-\frac12}(\widehat\Gamma),
	H^{\frac12}(\widehat\Gamma)\times H^{-\frac12}(\widehat\Gamma)
	\Bigr):\br\mapsto\widehat{\mathsf C}_\br^{(\kappa),\pm}
\end{equation}
admit bounded holomorphic extensions to $\mathfrak T_\delta$.
\end{theorem}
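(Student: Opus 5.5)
Since $\widehat{\mathsf C}_\br^{(\kappa),+}=\mathsf I_{\widehat\Gamma}-\widehat{\mathsf C}_\br^{(\kappa),-}$ and the identity is constant in $\br$, it suffices to treat $\widehat{\mathsf C}_\br^{(\kappa),-}$. Holomorphy of a $2\times2$ block operator in $\mathscr L(H^{\frac12}\times H^{-\frac12})$ is equivalent to holomorphy of each block in its own operator space, so the proof reduces to four statements: shape holomorphy of $\widehat{\mathsf V}_\br^{(\kappa)}$, $\widehat{\mathsf K}_\br^{(\kappa)}$, $\widehat{\mathsf K}_\br^{(\kappa)\prime}$ and $\widehat{\mathsf W}_\br^{(\kappa)}$. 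For each we find a $\delta$, and the final $\delta$ is the minimum of the four.

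\textbf{Single layer.} On $\mathscr C^{0,\alpha}$ densities, $\widehat{\mathsf V}_\br^{(\kappa)}u(\widehat{\mathbf x})=\int_{\widehat\Gamma}G^{(\kappa)}(\br(\widehat{\mathbf x}),\br(\widehat{\mathbf y}))J_{\text S,\br}(\widehat{\mathbf y})u(\widehat{\mathbf y})\dd s_{\widehat{\mathbf y}}$. The operator $\widehat{\mathsf V}_\br^{(\kappa)}J_{\text S,\br}^{-1}$, whose kernel is $G^{(\kappa)}(\br(\cdot),\br(\cdot))$, is symmetric in the sense of \eqref{eq:symmetry_Lax}. By \cite{LanzaRossiHelmholtz2008,DallaRivaLuzziniMusolino2022}, this map is real analytic into $\mathscr L(\mathscr C^{0,\alpha},\mathscr C^{1,\alpha})$. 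Real analyticity together with compactness of $\mathfrak T$ gives a holomorphic extension to some $\mathfrak T_\delta$, bounded after shrinking $\delta$. Here I would check that the cited works indeed give analyticity in a $\mathscr C^{1,1}$ (or $\mathscr C^{1,\alpha}$) topology that the $\mathscr C^{1,1}$ norm dominates, and that the holomorphic extension has a kernel $G^{(\kappa)}(\br(\widehat{\mathbf x}),\br(\widehat{\mathbf y}))$ with complexified $\br$, so that symmetry persists on $\mathfrak T_\delta$.

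Next, Lemma~\ref{lem:c11-sobolev-extension}(ii) shows that $\mathcal E$ is a bounded complex linear map from the closed subspace of symmetric operators into $\mathscr L(H^{-\frac12},H^{\frac12})$. Composing it with the holomorphic map preserves holomorphy and boundedness. Right multiplication by $J_{\text S,\br}$ is holomorphic into $\mathscr L(H^{-\frac12})$, by Lemma~\ref{lem:c11-geometric-holomorphy}(i) combined with the multiplier bound of Lemma~\ref{lem:c11-jacobian-multiplier}, which must be verified to hold linearly in the $\mathscr C^{0,1}$ norm of the multiplier, including for complex multipliers. This yields holomorphy of $\widehat{\mathsf V}_\br^{(\kappa)}$. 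I also need that $\mathcal E$ applied to the analytic extension agrees with $\tau_\br\mathsf V_{\Gamma_\br}\tau_\br^{-1}$ on $\mathfrak T$; this follows from density together with Lemma~\ref{lem:c11-pullback-isomorphisms}.

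\textbf{Double layer and adjoint.} Lemma~\ref{lem:c11-sobolev-extension} requires symmetric operators into $\mathscr C^{1,\alpha}$, which $\widehat{\mathsf K}$ is not. I expect this to be the main obstacle. My first attempt is to use analyticity of $\widehat{\mathsf K}'$-type operators in $\mathscr L(\mathscr C^{0,\alpha},\mathscr C^{0,\alpha})$ and to write $\widehat{\mathsf K}_\br^{(\kappa)\prime}=\widehat{\mathsf V}_0\,\widehat{\mathsf V}_0^{-1}\widehat{\mathsf K}_\br^{(\kappa)\prime}$. I would then apply Lax's lemma directly with the pair $A=\widehat{\mathsf K}'$-type and $B$ its $(\cdot,\cdot)_0$-adjoint. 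That adjoint is essentially $\widehat{\mathsf V}_0^{-1}\widehat{\mathsf K}\widehat{\mathsf V}_0$ up to conjugation and Jacobians, and should be bounded on $\mathscr C^{0,\alpha}$ because $\widehat{\mathsf K}$ is analytic in $\mathscr L(\mathscr C^{1,\alpha},\mathscr C^{1,\alpha})$ \cite[Lemma~2.5]{DallaRivaLuzziniMusolino2022}.

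This gives $H^{-\frac12}$ bounds for $\widehat{\mathsf K}'$ that are linear in the Hölder operator norms, so holomorphy transfers as before. The analogous argument in $H^{\frac12}$ handles $\widehat{\mathsf K}$. Alternatively, $\widehat{\mathsf K}_\br$ can be obtained by duality from \eqref{eq:c11-scalar-pullback-duality}: $\widehat{\mathsf K}_\br=(J_{\text S,\br}\widehat{\mathsf K}'_\br J_{\text S,\br}^{-1})'$. This uses that $T\mapsto T'$ is bounded complex linear, together with the Jacobian multipliers from Lemma~\ref{lem:c11-geometric-holomorphy}.

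\textbf{Hypersingular.} I would use the Maue/Nédélec formula transported to $\widehat\Gamma$:
\[
\mathsf W_{\Gamma_\br}=\curl_{\Gamma_\br}'\mathsf V\curl_{\Gamma_\br}-\kappa^2\boldsymbol n_\br\cdot\mathsf V(\boldsymbol n_\br\,\cdot).
\]
Pulled back, the surface curl becomes $\tau_\br\curl_{\Gamma_\br}\tau_\br^{-1}=\boldsymbol N_\br\times(F_\br^{-1})^\top\nabla_{\widehat\Gamma}$, up to a precise transpose convention to be fixed. The dual side carries Jacobian factors. Each factor is a $\mathscr C^{0,1}$ multiplier that is holomorphic by Lemma~\ref{lem:c11-geometric-holomorphy}(ii), so these are bounded multipliers on $H^{\pm\frac12}$. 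The differential operators $\nabla_{\widehat\Gamma}$ and its dual are fixed. The vector single layer is $\widehat{\mathsf V}_\br^{(\kappa)}$ applied componentwise. Hence $\widehat{\mathsf W}_\br$ is a finite sum of products of holomorphic operator-valued maps, and is therefore holomorphic.
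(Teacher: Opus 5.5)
Your reduction to the four blocks, the single-layer step (symmetrize with $J_{\text S,\br}^{-1}$, apply Lemma~\ref{lem:c11-sobolev-extension}, multiply back by $J_{\text S,\br}$ on $H^{-\frac12}(\widehat\Gamma)$), the duality step for the adjoint, and the transported Maue formula for the hypersingular operator all follow the paper's Propositions~\ref{prop:c11-singlelayer-holomorphy}--\ref{prop:c11-hypersingular-holomorphy}.

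For persistence of symmetry on $\mathfrak T_\delta$, the paper does not complexify the kernel. It applies the identity theorem to the antisymmetric part $\mathsf b_\br(u,v)-\mathsf b_\br(v,u)$ on balls centred at real points. This avoids choosing a branch of the complex distance and identifying that kernel operator with the analytic continuation.

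The real difference is the double layer. The paper places $\widehat{\mathsf K}_\br^{(\kappa)}\widehat{\mathsf V}_0$ and $\widehat{\mathsf V}_0(\widehat{\mathsf K}_\br^{(\kappa)})'$ in the symmetric off-diagonal block $\mathsf B_\br$, so that Lemma~\ref{lem:c11-sobolev-extension} applies verbatim. It then reads off $\widehat{\mathsf K}_\br^{(\kappa)}=(\mathcal E(\mathsf B_\br))_{21}\widehat{\mathsf V}_0^{-1}$. You instead apply Lax's lemma directly to a non-symmetric adjoint pair on $\mathscr C^{0,\alpha}(\widehat\Gamma)$ with $(\cdot,\cdot)_0$. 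That route can work, but as you describe it one step fails.

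\textbf{The failing step.} Suppose, as your notation suggests, $A$ is the transported operator $\widehat{\mathsf K}_\br^{(\kappa)\prime}=J_{\text S,\br}^{-1}(\widehat{\mathsf K}_\br^{(\kappa)})'J_{\text S,\br}$. Then its $(\cdot,\cdot)_0$-adjoint is $B=\widehat{\mathsf J}\,\widehat{\mathsf V}_0^{-1}J_{\text S,\br}\widehat{\mathsf K}_\br^{(\kappa)}J_{\text S,\br}^{-1}\widehat{\mathsf V}_0\,\widehat{\mathsf J}$. This $B$ is not bounded on $\mathscr C^{0,\alpha}(\widehat\Gamma)$. For a $\mathscr C^{1,1}$ transformation, $J_{\text S,\br}^{\pm1}$ is only Lipschitz, so it is not a multiplier on $\mathscr C^{1,\alpha}(\widehat\Gamma)$. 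Consequently you cannot invoke the $\mathscr C^{1,\alpha}$ analyticity of $\widehat{\mathsf K}_\br^{(\kappa)}$, and $\widehat{\mathsf V}_0^{-1}$ cannot be applied within the H\"older scale.

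\textbf{The fix.} Take $A=(\widehat{\mathsf K}_\br^{(\kappa)})'$, the bilinear reference dual. Then $B=\widehat{\mathsf J}\,\widehat{\mathsf V}_0^{-1}\widehat{\mathsf K}_\br^{(\kappa)}\widehat{\mathsf V}_0\,\widehat{\mathsf J}$ is bounded on $\mathscr C^{0,\alpha}(\widehat\Gamma)$. Attach the Jacobians only afterwards, on $H^{-\frac12}(\widehat\Gamma)$. This is exactly why $\mathsf B_\br$ contains $(\widehat{\mathsf K}_\br^{(\kappa)})'$ rather than $\widehat{\mathsf K}_\br^{(\kappa)\prime}$. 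Likewise, your factorization $\widehat{\mathsf V}_0\widehat{\mathsf V}_0^{-1}\widehat{\mathsf K}_\br^{(\kappa)\prime}$ is meaningless in the H\"older scale, but you do not need it.

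Two further points are needed for the holomorphy transfer.
\begin{itemize}
\item Lax's lemma gives $\|\mathcal E(A)\|\leq C(\|A\|\,\|B\|)^{1/2}$, which is not linear in $A$. You must therefore work on the closed subspace of dual pairs $(A,K)\in\mathscr L(\mathscr C^{0,\alpha})\times\mathscr L(\mathscr C^{1,\alpha})$, normed by $\|A\|+\|K\|$, and use $(ab)^{1/2}\leq\tfrac12(a+b)$.
\item The duality between the two holomorphic extensions must persist for complex $\br$, again by the identity theorem. Only then are the two Taylor remainders a dual pair, so that Lax applies to them.
\end{itemize}
The block $\mathsf B_\br$ packages both facts into one symmetric operator whose H\"older norm controls both entries, giving a linear extension map. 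Once corrected, your route avoids the auxiliary block and yields $\widehat{\mathsf K}_\br^{(\kappa)\prime}$ on $H^{-\frac12}(\widehat\Gamma)$ directly.
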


\begin{proof}
Propositions~\ref{prop:c11-singlelayer-holomorphy},
\ref{prop:c11-double-layer-holomorphy},
\ref{prop:c11-adjoint-double-layer-holomorphy}, and
\ref{prop:c11-hypersingular-holomorphy},
together with the block formula
\eqref{eq:c11-transported-helmholtz-projectors}, show that the maps
$\mathfrak T\ni\br\mapsto\widehat{\mathsf C}_\br^{(\kappa),\pm}$,
with values in
$\mathscr L(H^{\frac12}(\widehat\Gamma)\times H^{-\frac12}(\widehat\Gamma),
H^{\frac12}(\widehat\Gamma)\times H^{-\frac12}(\widehat\Gamma))$,
admit bounded holomorphic extensions to $\mathfrak T_\delta$.
\end{proof}

\subsection{The Single Layer BIO}
For each $\br\in\mathfrak T$ and $0<\alpha<1$,
the Helmholtz single layer BIO in~\eqref{eq:c11-scalar-bios}
has, for $\mu\in\mathscr C^{0,\alpha}(\Gamma_\br)$, the explicit expression
\begin{equation}
	(\mathsf V_{\Gamma_\br}^{(\kappa)}\mu)(\mathbf{x})
	=
	\int_{\Gamma_\br}G^{(\kappa)}(\mathbf{x},\mathbf{y})
	                  \mu(\mathbf{y})\,\dd s_{\mathbf{y}},
	\quad \forall\mathbf{x}\in\Gamma_\br,
\end{equation}
and $\mathsf V_{\Gamma_\br}^{(\kappa)} \in \mathscr{L}\left(\mathscr C^{0,\alpha}(\Gamma_\br),
\mathscr C^{1,\alpha}(\Gamma_\br)\right)$, see, e.g.,
\cite[Lemma~2.5, item~(i)]{DallaRivaLuzziniMusolino2022}.

By Lemma~\ref{lem:c11-pullback-isomorphisms}, its pullback
$\widehat{\mathsf V}_\br^{(\kappa)}
=\tau_\br\mathsf V_{\Gamma_\br}^{(\kappa)}\tau_\br^{-1}$
has the same H\"older and Sobolev mapping properties on
$\widehat\Gamma$.
To apply Lemma~\ref{lem:c11-sobolev-extension}, we introduce
the auxiliary single layer BIO
$\widehat{\mathsf U}_\br^{(\kappa)}
\coloneqq\widehat{\mathsf V}_\br^{(\kappa)}J_{\text S,\br}^{-1}$,
where $J_{\text S,\br}^{-1}$ acts by multiplication on the density.
Since $J_{\text S,\br}^{-1}\in\mathscr C^{0,1}(\widehat\Gamma)$,
this multiplication is bounded on
$\mathscr C^{0,\alpha}(\widehat\Gamma)$ and
$H^{-\frac12}(\widehat\Gamma)$ by
Lemma~\ref{lem:c11-jacobian-multiplier}.
Lemma~\ref{lem:c11-jacobian-multiplier} therefore gives the same mapping
properties as $\widehat{\mathsf V}_\br^{(\kappa)}$.
For each $\br \in\mathfrak{T}$, the surface change of variables
\eqref{eq:c11-surface-change-of-variables} gives for 
$\mu\in\mathscr C^{0,\alpha}(\widehat\Gamma)$
\begin{equation}
	(\widehat{\mathsf U}_\br^{(\kappa)}\mu)(\widehat{\mathbf{x}})
	=
	\int_{\widehat\Gamma}
	G^{(\kappa)}\bigl(\br(\widehat{\mathbf{x}}),
	             \br(\widehat{\mathbf{y}})\bigr)
	\mu(\widehat{\mathbf{y}})\,\dd s_{\widehat{\mathbf{y}}},
	\quad
	\forall\widehat{\mathbf{x}}\in\widehat\Gamma.
	\label{eq:c11-symmetric-layer}
\end{equation}
By \eqref{eq:c11-symmetric-layer},
$\widehat{\mathsf U}_\br^{(\kappa)}$ is symmetric with respect to the
bilinear pairing
$\langle\cdot,\cdot\rangle_{\widehat\Gamma}$.
We use this symmetry and Lemma~\ref{lem:c11-sobolev-extension}
to transfer holomorphy to the fractional Sobolev operator norm.

\begin{proposition}[Shape holomorphy of the single layer BIO]
\label{prop:c11-singlelayer-holomorphy}
Let $\mathfrak T$ satisfy Assumption~\ref{ass:c11-deformations}.
Then there exists $\delta>0$ such that the maps
\begin{equation}
	\mathfrak T\to
	\mathscr L\bigl(H^{-\frac{1}{2}}(\widehat\Gamma),H^{\frac{1}{2}}(\widehat\Gamma)\bigr):
	\br\mapsto\widehat{\mathsf V}_\br^{(\kappa)},
\end{equation}
and
\begin{equation}
	\mathfrak T\to
	\mathscr L\bigl(H^{-\frac{1}{2}}(\widehat\Gamma),H^{\frac{1}{2}}(\widehat\Gamma)\bigr):
	\br\mapsto\widehat{\mathsf U}_\br^{(\kappa)},
\end{equation}
admit bounded holomorphic extensions
to $\mathfrak T_\delta$.
\end{proposition}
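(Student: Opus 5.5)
We first treat the auxiliary operator $\widehat{\mathsf U}_\br^{(\kappa)}$ and then recover $\widehat{\mathsf V}_\br^{(\kappa)}=\widehat{\mathsf U}_\br^{(\kappa)}J_{\text S,\br}$ by multiplication with the Jacobian.

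\textbf{Step 1: holomorphy in H\"older spaces.} Fix $0<\alpha<1$. The starting point is \cite[Lemma~2.5, item~(i)]{DallaRivaLuzziniMusolino2022} together with \cite{LanzaRossiHelmholtz2008}. They give real analyticity of
\[
\br\mapsto \widehat{\mathsf U}_\br^{(\kappa)}\in\mathscr L\bigl(\mathscr C^{0,\alpha}(\widehat\Gamma),\mathscr C^{1,\alpha}(\widehat\Gamma)\bigr)
\]
on the open set of $\mathscr C^{1,\alpha}$ embeddings. The kernel representation \eqref{eq:c11-symmetric-layer} is used directly. Their proofs actually produce a holomorphic extension: the kernel $G^{(\kappa)}(\br(\widehat{\mathbf x}),\br(\widehat{\mathbf y}))$ is built from $\|\br(\widehat{\mathbf x})-\br(\widehat{\mathbf y})\|^2$, which extends holomorphically in complex $\br$ as the bilinear sum $\sum_j(\br_j(\widehat{\mathbf x})-\br_j(\widehat{\mathbf y}))^2$. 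One then takes a holomorphic square root near the positive reals. Since $\mathscr C^{1,1}(\widehat\Gamma;\mathbb C^3)\hookrightarrow\mathscr C^{1,\alpha}(\widehat\Gamma;\mathbb C^3)$ continuously, composition gives a holomorphic extension on some $\mathscr C^{1,1}$-ball around each $\br_0\in\mathfrak T$.

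Compactness of $\mathfrak T$ (Assumption~\ref{ass:c11-deformations}(i)) lets us cover $\mathfrak T$ by finitely many such balls. By the identity theorem the local extensions agree on overlaps, so they define a single extension on a common $\mathfrak T_\delta$. Shrinking $\delta$ gives a uniform bound there. The main obstacle is this step. One must verify that the cited real-analytic results genuinely provide a complex neighborhood in which the complexified distance stays away from the branch cut, uniformly. The key quantitative input is a bi-Lipschitz lower bound $\|\br(\widehat{\mathbf x})-\br(\widehat{\mathbf y})\|\ge c\|\widehat{\mathbf x}-\widehat{\mathbf y}\|$, uniform on $\mathfrak T$ by compactness. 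It is perturbed by at most $\delta\|\widehat{\mathbf x}-\widehat{\mathbf y}\|$ for complex $\br$, which keeps $\operatorname{Re}\sum_j(\cdot)^2\gtrsim\|\widehat{\mathbf x}-\widehat{\mathbf y}\|^2$. If the citation does not state complex extensions explicitly, I would redo their kernel estimates with this complexified distance.

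\textbf{Step 2: transfer to Sobolev spaces.} For complex $\br\in\mathfrak T_\delta$, the kernel $G^{(\kappa)}(\br(\widehat{\mathbf x}),\br(\widehat{\mathbf y}))$ remains symmetric in $(\widehat{\mathbf x},\widehat{\mathbf y})$. Hence each extended operator satisfies \eqref{eq:symmetry_Lax}. Lemma~\ref{lem:c11-sobolev-extension} then yields $\mathcal E(\widehat{\mathsf U}_\br^{(\kappa)})\in\mathscr L(H^{-\frac12}(\widehat\Gamma),H^{\frac12}(\widehat\Gamma))$. This extension agrees with the transported operator for real $\br$, by density and item~(iii). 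The class of symmetric operators in $\mathscr L(\mathscr C^{0,\alpha},\mathscr C^{1,\alpha})$ is a closed subspace, and on it $\mathcal E$ is bounded and complex linear by item~(ii). Composing a bounded linear map with a holomorphic map gives a holomorphic map. Therefore $\br\mapsto\mathcal E(\widehat{\mathsf U}_\br^{(\kappa)})$ is holomorphic on $\mathfrak T_\delta$, with bound $C(\widehat\Gamma,\alpha)\sup\|\widehat{\mathsf U}_\br^{(\kappa)}\|$.

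\textbf{Step 3: recovering $\widehat{\mathsf V}_\br^{(\kappa)}$.} Write $\widehat{\mathsf V}_\br^{(\kappa)}=\widehat{\mathsf U}_\br^{(\kappa)}M_{J_{\text S,\br}}$, where $M_q$ denotes multiplication by $q$. By Lemma~\ref{lem:c11-geometric-holomorphy}(i), $\br\mapsto J_{\text S,\br}\in\mathscr C^{0,1}(\widehat\Gamma)$ extends holomorphically and boundedly. The map $q\mapsto M_q\in\mathscr L(H^{-\frac12}(\widehat\Gamma))$ is bounded and linear from $\mathscr C^{0,1}$; the bound follows from the proof of Lemma~\ref{lem:c11-jacobian-multiplier}, via duality and interpolation between $L^2$ and $H^1$. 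The product of two bounded holomorphic operator-valued maps is bounded and holomorphic, which gives the claim for $\widehat{\mathsf V}_\br^{(\kappa)}$ after possibly decreasing $\delta$ to the minimum of the two radii.
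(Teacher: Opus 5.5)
Your architecture matches the paper's. It obtains H\"older holomorphy from the cited real analyticity plus a finite covering of the compact set $\mathfrak T$, then symmetry, then Lemma~\ref{lem:c11-sobolev-extension} to pass to $\mathscr L(H^{-\frac12}(\widehat\Gamma),H^{\frac12}(\widehat\Gamma))$, and finally multiplication by $J_{\text S,\br}$. Your Step~2 device (symmetric operators form a closed subspace on which $\mathcal E$ is bounded and complex linear) is a tidier form of the paper's Taylor-remainder computation. Step~3 is the paper's final step. The paper applies the cited lemma to $\widehat{\mathsf V}_\br^{(\kappa)}$ and obtains $\widehat{\mathsf U}_\br^{(\kappa)}=\widehat{\mathsf V}_\br^{(\kappa)}J_{\text S,\br}^{-1}$ from Lemmas~\ref{lem:c11-jacobian-multiplier} and~\ref{lem:c11-geometric-holomorphy}; that difference is harmless.

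One step does not go through as written: symmetry of the extended operators for complex $\br\in\mathfrak T_\delta$. This matters, because $\mathcal E$ is only defined on operators satisfying~\eqref{eq:symmetry_Lax}. You justify it by the symmetry of the complexified kernel $G^{(\kappa)}(\br(\widehat{\mathbf x}),\br(\widehat{\mathbf y}))$. That presupposes your Step~1 extension acts through this kernel.

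The citation only gives real analyticity. The holomorphic extension it produces, via complexification of the local power series \cite[Sections~5 and~13]{Mujica1986}, carries no kernel formula. Nothing in the cited statement guarantees otherwise. Your fallback is far more than keeping $\operatorname{Re}\sum_j(\cdot)^2$ away from the branch cut. It needs uniform, holomorphic $\mathscr C^{0,\alpha}\to\mathscr C^{1,\alpha}$ estimates for the complexified operator, i.e.\ redoing the Lanza de Cristoforis--Rossi analysis in complex form.

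The paper avoids all of this. It sets $\mathsf b_\br(u,v)\coloneqq\int_{\widehat\Gamma}(\widehat{\mathsf U}_\br^{(\kappa)}u)v\,\dd s$. The antisymmetric part $\mathsf b_\br(u,v)-\mathsf b_\br(v,u)$ is holomorphic in $\br$ and vanishes for real $\br$ by~\eqref{eq:c11-symmetric-layer}. Each ball $B(\br_0,\delta)$, $\br_0\in\mathfrak T$, is connected and has a nonempty relatively open real part. The identity theorem \cite[Theorem~5.34]{DudleyNorvaisa2011} then makes the antisymmetric part vanish on all of $\mathfrak T_\delta$. With this replacement the abstract extension suffices, and the branch-cut analysis you single out as the main obstacle becomes unnecessary.
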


\begin{proof}
Fix $0<\alpha<1$.
By \cite[Lemma~2.5, item~(i)]{DallaRivaLuzziniMusolino2022}, the map
$\mathfrak T\to\mathscr L(\mathscr C^{0,\alpha}(\widehat\Gamma),
                         \mathscr C^{1,\alpha}(\widehat\Gamma)):$
$\br\mapsto\widehat{\mathsf V}_\br^{(\kappa)}$ is real analytic.
For each $\br_0\in\mathfrak T$, real analyticity gives a holomorphic
extension to a complex $\mathscr C^{1,\alpha}$ neighborhood of $\br_0$
\cite[Sections~5 and~13]{Mujica1986}.
Since the inclusion
$\mathscr C^{1,1}(\widehat\Gamma;\mathbb C^3)
\hookrightarrow\mathscr C^{1,\alpha}(\widehat\Gamma;\mathbb C^3)$
is continuous, this extension restricts to a complex
$\mathscr C^{1,1}$ neighborhood of $\br_0$.
Choose this neighborhood to be a ball and shrink it so that the extension
is bounded there and every real transformation in the ball maps
$\widehat\Gamma$ onto the boundary of a $\mathscr C^{1,1}$ domain.

If two such balls overlap, their intersection is connected and
its real part contains a relatively open set. Both extensions coincide
there with the original BIO. Since
$\mathscr C^{1,1}(\widehat\Gamma;\mathbb C^3)$ is the complexification of
$\mathscr C^{1,1}(\widehat\Gamma;\mathbb R^3)$, uniqueness of analytic
continuation in Banach spaces
\cite[Theorem~5.34]{DudleyNorvaisa2011} shows that the extensions coincide
throughout the overlap.
A finite covering argument and the compactness of $\mathfrak T$
show that there exists $\delta>0$ such that the map
$\mathfrak T\ni\br\mapsto\widehat{\mathsf V}_\br^{(\kappa)}
\in\mathscr L(\mathscr C^{0,\alpha}(\widehat\Gamma),
\mathscr C^{1,\alpha}(\widehat\Gamma))$
admits a bounded holomorphic extension to $\mathfrak T_\delta$.
By Lemmas~\ref{lem:c11-jacobian-multiplier}
and~\ref{lem:c11-geometric-holomorphy}, after decreasing $\delta$ if
necessary, the formula
$\widehat{\mathsf U}_\br^{(\kappa)}
=\widehat{\mathsf V}_\br^{(\kappa)}J_{\text S,\br}^{-1}$
gives a bounded holomorphic extension to $\mathfrak T_\delta$ of the map
$\mathfrak T\ni\br\mapsto\widehat{\mathsf U}_\br^{(\kappa)}
\in\mathscr L(\mathscr C^{0,\alpha}(\widehat\Gamma),
\mathscr C^{1,\alpha}(\widehat\Gamma))$.
Both extensions are complex Fr\'echet differentiable on $\mathfrak T_\delta$.
We use the same notation for these extensions.

For each $\br\in\mathfrak T_\delta$, define the bounded bilinear form
\[
 \mathsf b_\br(u,v)\coloneqq\int_{\widehat\Gamma}
 (\widehat{\mathsf U}_\br^{(\kappa)}u)v\,\dd s,
 \quad u,v\in\mathscr C^{0,\alpha}(\widehat\Gamma).
\]
The map $\br\mapsto\mathsf b_\br$ is holomorphic with values in the
Banach space of bounded bilinear forms on
$\mathscr C^{0,\alpha}(\widehat\Gamma)\times
\mathscr C^{0,\alpha}(\widehat\Gamma)$.
Its antisymmetric part, i.e., the bilinear form
$(u,v)\mapsto\mathsf b_\br(u,v)-\mathsf b_\br(v,u)$,
depends holomorphically on $\br\in\mathfrak T_\delta$ in the same Banach space.
For real $\br\in\mathfrak T_\delta$, the surface change of variables
gives~\eqref{eq:c11-symmetric-layer}, so the antisymmetric part vanishes.
Each ball $B(\br_0,\delta)$, $\br_0\in\mathfrak T$, is contained in
$\mathfrak T_\delta$, is connected, and has a nonempty relatively
open real part. By uniqueness of analytic continuation for Banach-valued
holomorphic maps \cite[Theorem~5.34]{DudleyNorvaisa2011}, the
antisymmetric part vanishes on each ball. After decreasing $\delta$ if
necessary, these balls cover $\mathfrak T_\delta$.
Thus $\mathsf b_\br$ is symmetric and
$\widehat{\mathsf U}_\br^{(\kappa)}$ satisfies~\eqref{eq:symmetry_Lax}
in Lemma~\ref{lem:c11-sobolev-extension} for every
$\br\in\mathfrak T_\delta$.

By Lemma~\ref{lem:c11-sobolev-extension}, item~(ii), the extension
operation $\mathcal E$ is bounded and complex linear on the space of
H\"older operators satisfying its hypothesis~\eqref{eq:symmetry_Lax}.
Let $\br\in\mathfrak T_\delta$ and let
$\boldsymbol h\in\mathscr C^{1,1}(\widehat\Gamma;\mathbb C^3)$ with
$\norm{\boldsymbol h}_{\mathscr C^{1,1}(\widehat\Gamma;\mathbb C^3)}\to0$ and
$\br+\boldsymbol h\in\mathfrak T_\delta$.
Since $\br\mapsto\mathsf b_\br$ takes values in the closed subspace of
symmetric bilinear forms, its Fr\'echet derivative is symmetric in its
two arguments. Thus the operator
$\mathrm D_\br\widehat{\mathsf U}_\br^{(\kappa)}[\boldsymbol h]$
satisfies the hypothesis~\eqref{eq:symmetry_Lax} of
Lemma~\ref{lem:c11-sobolev-extension}.
Since this hypothesis is linear in the operator, it also holds for the
Taylor remainder
\begin{equation}
 \mathsf R_\br(\boldsymbol h)
 \coloneqq
 \widehat{\mathsf U}_{\br+\boldsymbol h}^{(\kappa)}
 -\widehat{\mathsf U}_\br^{(\kappa)}
 -\mathrm D_\br\widehat{\mathsf U}_\br^{(\kappa)}[\boldsymbol h].
\end{equation}
The extension estimate in Lemma~\ref{lem:c11-sobolev-extension},
item~(ii), with $C(\widehat\Gamma,\alpha)$ given
in~\eqref{eq:c11-sobolev-extension-constant}, gives
\begin{equation}
 \begin{aligned}
 \|\mathcal E(\mathsf R_\br(\boldsymbol h))\|_
 {\mathscr L(H^{-\frac12}(\widehat\Gamma),H^{\frac12}(\widehat\Gamma))}
 &\leq C(\widehat\Gamma,\alpha)\|\mathsf R_\br(\boldsymbol h)\|_
 {\mathscr L(\mathscr C^{0,\alpha}(\widehat\Gamma),
             \mathscr C^{1,\alpha}(\widehat\Gamma))}\\
 &=o\bigl(\norm{\boldsymbol h}_
 {\mathscr C^{1,1}(\widehat\Gamma;\mathbb C^3)}\bigr).
 \end{aligned}
\end{equation}
Moreover, the map
$\boldsymbol h\mapsto\mathcal E(
\mathrm D_\br\widehat{\mathsf U}_\br^{(\kappa)}[\boldsymbol h])$
is bounded and complex linear. Hence the map
$\mathfrak T_\delta\to
\mathscr L(H^{-\frac12}(\widehat\Gamma),H^{\frac12}(\widehat\Gamma)):$
$\br\mapsto\mathcal E(\widehat{\mathsf U}_\br^{(\kappa)})$ is complex
Fr\'echet differentiable, with
\begin{equation}
 \mathrm D_\br\bigl[\mathcal E(
 \widehat{\mathsf U}_\br^{(\kappa)})\bigr][\boldsymbol h]
 =\mathcal E\bigl(
 \mathrm D_\br\widehat{\mathsf U}_\br^{(\kappa)}[\boldsymbol h]\bigr).
\end{equation}
It is therefore bounded and holomorphic on $\mathfrak T_\delta$.

For each $\br\in\mathfrak T_\delta$,
Lemma~\ref{lem:c11-sobolev-extension}, item~(iii), gives
\begin{equation}
	\mathcal E(\widehat{\mathsf U}_\br^{(\kappa)})u
	=\widehat{\mathsf U}_\br^{(\kappa)}u,
	\quad\forall u\in\mathscr C^{0,\alpha}(\widehat\Gamma).
\end{equation}
Since $J_{\text S,\br}u\in\mathscr C^{0,\alpha}(\widehat\Gamma)$,
the same identity gives
$\mathcal E(\widehat{\mathsf U}_\br^{(\kappa)})J_{\text S,\br}u
=\widehat{\mathsf U}_\br^{(\kappa)}J_{\text S,\br}u
=\widehat{\mathsf V}_\br^{(\kappa)}u$.
Hence both extensions to the Sobolev spaces agree with their H\"older extensions on
$\mathscr C^{0,\alpha}(\widehat\Gamma)$ for each $\br\in\mathfrak T_\delta$.
For real $\br\in\mathfrak T$, the extensions and the original transported
BIOs are bounded from $H^{-\frac12}(\widehat\Gamma)$ to
$H^{\frac12}(\widehat\Gamma)$, so density of
$\mathscr C^{0,\alpha}(\widehat\Gamma)$ in $H^{-\frac12}(\widehat\Gamma)$
identifies them on $H^{-\frac12}(\widehat\Gamma)$.
By the preceding extension and Lemma~\ref{lem:c11-geometric-holomorphy},
\[
 \mathfrak T\to
 \mathscr L\bigl(H^{-\frac12}(\widehat\Gamma),
                H^{\frac12}(\widehat\Gamma)\bigr):
 \br\mapsto\widehat{\mathsf V}_\br^{(\kappa)}
\]
admits a bounded holomorphic extension to $\mathfrak T_\delta$,
given by
$\br\mapsto\mathcal E(\widehat{\mathsf U}_\br^{(\kappa)})J_{\text S,\br}$.
\end{proof}

\begin{remark}
The scalar single layer operator denoted by $\mathsf V_{\mathbf z}$ in
\cite[Theorem~5.1]{EscapilJerez2026EFIE} corresponds to
$\widehat{\mathsf U}_\br^{(\kappa)}$ in our notation.
The cited result establishes its
mapping properties and holomorphic dependence in fractional Sobolev
spaces, i.e., as a map $H^{-\frac12}(\widehat\Gamma)\to H^{\frac12}(\widehat\Gamma)$, on the parameters of
affine-parametric families of boundary transformations in
$\mathscr C^{1,1}(\widehat\Gamma;\mathbb R^3)$.
Proposition~\ref{prop:c11-singlelayer-holomorphy} establishes
holomorphy with respect to the boundary transformation itself for
the compact families in Assumption~\ref{ass:c11-deformations},
without requiring an affine-paramteric representation of the boundary.
\end{remark}

\subsection{The Double Layer BIO}
\label{sec:c11-double-layer-holomorphy}

For each $\br\in\mathfrak T$ and $0<\alpha<1$,
the Helmholtz double layer BIO in~\eqref{eq:c11-scalar-bios}
has the explicit expression
\begin{equation}
	(\mathsf K_{\Gamma_\br}^{(\kappa)}u)(\mathbf{x})
	=
	\int_{\Gamma_\br}
	\partial_{\boldsymbol n_\br(\mathbf{y})}
	G^{(\kappa)}(\mathbf{x},\mathbf{y})u(\mathbf{y})\,\dd s_{\mathbf{y}},
	\quad\forall\mathbf{x}\in\Gamma_\br,
\end{equation}
for all $u\in\mathscr C^{1,\alpha}(\Gamma_\br)$.
The operator is bounded on $\mathscr C^{1,\alpha}(\Gamma_\br)$
\cite[Lemma~2.5, item~(ii)]{DallaRivaLuzziniMusolino2022}
and on $H^{\frac12}(\Gamma_\br)$ by~\eqref{eq:c11-scalar-bios}.
By Lemma~\ref{lem:c11-pullback-isomorphisms}, these bounds carry over
to its pullback
$\widehat{\mathsf K}_\br^{(\kappa)}
=\tau_\br\mathsf K_{\Gamma_\br}^{(\kappa)}\tau_\br^{-1}$
on the reference boundary $\widehat\Gamma$.

To apply Lemma~\ref{lem:c11-sobolev-extension}, we introduce a
symmetric block operator containing the double layer BIO and its adjoint.
We use the fixed reference Laplace single layer BIO
$\widehat{\mathsf V}_0$ introduced in the proof of
Lemma~\ref{lem:c11-sobolev-extension} to define
\begin{equation}
	\mathsf B_\br\coloneqq
	\begin{pmatrix}
	0&\widehat{\mathsf V}_0\bigl(\widehat{\mathsf K}_\br^{(\kappa)}\bigr)'\\
	\widehat{\mathsf K}_\br^{(\kappa)}\widehat{\mathsf V}_0&0
	\end{pmatrix}
	\in\mathscr L\Bigl(
	\mathscr C^{0,\alpha}(\widehat\Gamma)\times
	\mathscr C^{0,\alpha}(\widehat\Gamma),
	\mathscr C^{1,\alpha}(\widehat\Gamma)\times
	\mathscr C^{1,\alpha}(\widehat\Gamma)\Bigr).
	\label{eq:c11-double-layer-symmetric-block}
\end{equation}
For $\br\in\mathfrak T$, the symmetry of
$\widehat{\mathsf V}_0$ yields
$\bigl(\widehat{\mathsf K}_\br^{(\kappa)}
\widehat{\mathsf V}_0\bigr)'
=\widehat{\mathsf V}_0\bigl(\widehat{\mathsf K}_\br^{(\kappa)}\bigr)'$.
Hence the off-diagonal entries are dual to one another, and $\mathsf B_\br$
is symmetric with respect to the component-wise bilinear pairing
$\langle\cdot,\cdot\rangle_{\widehat\Gamma}$.

\Needspace{9\baselineskip}
\begin{proposition}[Shape holomorphy of the double layer BIO]
\label{prop:c11-double-layer-holomorphy}
Let $\mathfrak T$ satisfy Assumption~\ref{ass:c11-deformations}.
Then there exists $\delta>0$ such that the map
\begin{equation}
	\mathfrak T\to
	\mathscr L\bigl(H^{\frac12}(\widehat\Gamma),H^{\frac12}(\widehat\Gamma)\bigr):
	\br\mapsto\widehat{\mathsf K}_\br^{(\kappa)},
\end{equation}
admits a bounded holomorphic extension to $\mathfrak T_\delta$.
\end{proposition}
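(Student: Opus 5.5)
The plan is to repeat the argument of Proposition~\ref{prop:c11-singlelayer-holomorphy} with the symmetric block operator $\mathsf B_\br$ of~\eqref{eq:c11-double-layer-symmetric-block} in place of $\widehat{\mathsf U}_\br^{(\kappa)}$. At the end, $\widehat{\mathsf K}_\br^{(\kappa)}$ is recovered from the lower left entry by right composition with the fixed inverse $\widehat{\mathsf V}_0^{-1}\in\mathscr L_{\text{\normalfont iso}}(H^{\frac12}(\widehat\Gamma),H^{-\frac12}(\widehat\Gamma))$.

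\textbf{Step 1: holomorphy of $\br\mapsto\mathsf B_\br$ in H\"older spaces.} Fix $0<\alpha<1$. By \cite[Lemma~2.5, items~(ii) and~(iii)]{DallaRivaLuzziniMusolino2022}, the maps $\br\mapsto\widehat{\mathsf K}_\br^{(\kappa)}\in\mathscr L(\mathscr C^{1,\alpha}(\widehat\Gamma),\mathscr C^{1,\alpha}(\widehat\Gamma))$ and $\br\mapsto\widehat{\mathsf K}_\br^{(\kappa)\prime}\in\mathscr L(\mathscr C^{0,\alpha}(\widehat\Gamma),\mathscr C^{0,\alpha}(\widehat\Gamma))$ are real analytic. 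The local extension, overlap uniqueness and compactness covering argument from the single layer proof then gives bounded holomorphic extensions to some $\mathfrak T_\delta$. On the H\"older level, \eqref{eq:c11-scalar-pullback-duality} gives $(\widehat{\mathsf K}_\br^{(\kappa)})'=J_{\text S,\br}\widehat{\mathsf K}_\br^{(\kappa)\prime}J_{\text S,\br}^{-1}$. Lemmas~\ref{lem:c11-jacobian-multiplier} and~\ref{lem:c11-geometric-holomorphy} make this holomorphic in $\mathscr L(\mathscr C^{0,\alpha}(\widehat\Gamma),\mathscr C^{0,\alpha}(\widehat\Gamma))$. Composing with the fixed operator $\widehat{\mathsf V}_0\in\mathscr L(\mathscr C^{0,\alpha}(\widehat\Gamma),\mathscr C^{1,\alpha}(\widehat\Gamma))$ on the appropriate side shows that both off-diagonal entries, and hence $\mathsf B_\br$, are holomorphic in $\mathscr L(\mathscr C^{0,\alpha}(\widehat\Gamma)^2,\mathscr C^{1,\alpha}(\widehat\Gamma)^2)$ on $\mathfrak T_\delta$.

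\textbf{Step 2: symmetry for complex $\br$.} Consider the bilinear form $(u,v)\mapsto\int_{\widehat\Gamma}(\mathsf B_\br u)\cdot v\,\dd s$ on the product space. Its antisymmetric part is holomorphic on $\mathfrak T_\delta$ and vanishes for real $\br$ by the duality noted after~\eqref{eq:c11-double-layer-symmetric-block}. By the same connected ball argument and \cite[Theorem~5.34]{DudleyNorvaisa2011}, it vanishes on all of $\mathfrak T_\delta$.

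\textbf{Step 3: passage to Sobolev norms.} I need a block version of Lemma~\ref{lem:c11-sobolev-extension} on $\mathscr C^{0,\alpha}(\widehat\Gamma)^2\to\mathscr C^{1,\alpha}(\widehat\Gamma)^2$ with the componentwise pairing. I would obtain it by rerunning the proof with $\operatorname{diag}(\widehat{\mathsf V}_0,\widehat{\mathsf V}_0)$, which gives the inner product $(u_1,v_1)_0+(u_2,v_2)_0$ on $H^{-\frac12}(\widehat\Gamma)^2$, and then applying Lax's lemma exactly as before. The constant is the same up to a harmless factor. The Taylor remainder argument of Proposition~\ref{prop:c11-singlelayer-holomorphy} then shows that $\br\mapsto\mathcal E(\mathsf B_\br)$ is bounded and holomorphic with values in $\mathscr L(H^{-\frac12}(\widehat\Gamma)^2,H^{\frac12}(\widehat\Gamma)^2)$. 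Projecting to the lower left entry gives a bounded holomorphic map $\br\mapsto\mathsf M_\br\in\mathscr L(H^{-\frac12}(\widehat\Gamma),H^{\frac12}(\widehat\Gamma))$, where $\mathsf M_\br$ extends $\widehat{\mathsf K}_\br^{(\kappa)}\widehat{\mathsf V}_0$.

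\textbf{Step 4: conclusion.} The candidate extension is $\br\mapsto\mathsf M_\br\widehat{\mathsf V}_0^{-1}\in\mathscr L(H^{\frac12}(\widehat\Gamma),H^{\frac12}(\widehat\Gamma))$, which is holomorphic and bounded because $\widehat{\mathsf V}_0^{-1}$ is fixed. For real $\br$, $\mathsf M_\br$ agrees with $\widehat{\mathsf K}_\br^{(\kappa)}\widehat{\mathsf V}_0$ on $\mathscr C^{0,\alpha}(\widehat\Gamma)$. That dense set is mapped by $\widehat{\mathsf V}_0$ onto $\mathscr C^{1,\alpha}(\widehat\Gamma)$, which is dense in $H^{\frac12}(\widehat\Gamma)$. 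Since the transported double layer BIO is bounded on $H^{\frac12}(\widehat\Gamma)$, the two operators coincide there.

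\textbf{Expected main obstacle.} I expect the delicate points to be the block form of Lax's lemma and the identification of the H\"older dual $(\widehat{\mathsf K}_\br^{(\kappa)})'$ with $J_{\text S,\br}\widehat{\mathsf K}_\br^{(\kappa)\prime}J_{\text S,\br}^{-1}$ as H\"older operators. The latter follows from the surface change of variables \eqref{eq:c11-surface-change-of-variables} applied to the explicit kernels. The other steps are direct transcriptions of the single layer proof.
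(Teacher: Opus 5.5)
Your proposal is correct and follows the paper's proof essentially step by step. The steps match: real analyticity of $\widehat{\mathsf K}_\br^{(\kappa)}$ and $\widehat{\mathsf K}_\br^{(\kappa)\prime}$ combined with~\eqref{eq:c11-scalar-pullback-duality} gives a holomorphic H\"older extension of $\mathsf B_\br$; symmetry passes to complex $\br$ by uniqueness of analytic continuation; the block version of Lemma~\ref{lem:c11-sobolev-extension} with $\widehat{\mathsf V}_0$ acting diagonally, plus the Taylor remainder argument, gives Sobolev holomorphy; and $\widehat{\mathsf K}_\br^{(\kappa)}=(\mathcal E(\mathsf B_\br))_{21}\widehat{\mathsf V}_0^{-1}$ is identified with the original BIO by density of $\mathscr C^{1,\alpha}(\widehat\Gamma)$ in $H^{\frac12}(\widehat\Gamma)$. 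The only cosmetic difference is that you extend the two H\"older maps holomorphically before composing, whereas the paper first establishes real analyticity of $\mathsf B_\br$ and then extends.
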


\begin{proof}
Fix $0<\alpha<1$.
By \cite[Lemma~2.5, items~(ii) and~(iii)]{DallaRivaLuzziniMusolino2022},
the maps
$\mathfrak T\ni\br\mapsto\widehat{\mathsf K}_\br^{(\kappa)}
\in\mathscr L(\mathscr C^{1,\alpha}(\widehat\Gamma),
              \mathscr C^{1,\alpha}(\widehat\Gamma))$ and
$\mathfrak T\ni\br\mapsto\widehat{\mathsf K}_\br^{(\kappa)\prime}
\in\mathscr L(\mathscr C^{0,\alpha}(\widehat\Gamma),
              \mathscr C^{0,\alpha}(\widehat\Gamma))$ are real analytic.
Equation~\eqref{eq:c11-scalar-pullback-duality} and
Lemmas~\ref{lem:c11-jacobian-multiplier}
and~\ref{lem:c11-geometric-holomorphy} then show that
$\mathfrak T\ni\br\mapsto(\widehat{\mathsf K}_\br^{(\kappa)})'
\in\mathscr L(\mathscr C^{0,\alpha}(\widehat\Gamma),
              \mathscr C^{0,\alpha}(\widehat\Gamma))$ is real analytic.
Consequently, the map
$\mathfrak T\ni\br\mapsto\mathsf B_\br
\in\mathscr L((\mathscr C^{0,\alpha}(\widehat\Gamma))^2,
              (\mathscr C^{1,\alpha}(\widehat\Gamma))^2)$
introduced in~\eqref{eq:c11-double-layer-symmetric-block} is real analytic.
Arguing as in the proof of
Proposition~\ref{prop:c11-singlelayer-holomorphy}, these maps admit
bounded holomorphic extensions to $\mathfrak T_\delta$ for some
$\delta>0$ in their respective H\"older operator spaces.
We use the same notation for the extensions.

For each $\br\in\mathfrak T_\delta$, define the bounded bilinear form
\[
 \mathsf b_\br(u,v)\coloneqq\int_{\widehat\Gamma}
 (\mathsf B_\br u)\cdot v\,\dd s,
 \quad u,v\in\mathscr C^{0,\alpha}(\widehat\Gamma)\times
 \mathscr C^{0,\alpha}(\widehat\Gamma),
\]
where the dot product is bilinear, without complex conjugation.
The map $\br\mapsto\mathsf b_\br$ is holomorphic with values in the
Banach space of bounded bilinear forms on
$(\mathscr C^{0,\alpha}(\widehat\Gamma))^2\times
(\mathscr C^{0,\alpha}(\widehat\Gamma))^2$.
For real $\br\in\mathfrak T_\delta$, the block
formula~\eqref{eq:c11-double-layer-symmetric-block} and symmetry of
$\widehat{\mathsf V}_0$ imply that
$\mathsf b_\br(u,v)=\mathsf b_\br(v,u)$ for all
$u,v\in(\mathscr C^{0,\alpha}(\widehat\Gamma))^2$.
Its antisymmetric part, i.e., the bilinear form
$(u,v)\mapsto\mathsf b_\br(u,v)-\mathsf b_\br(v,u)$,
depends holomorphically on $\br\in\mathfrak T_\delta$ in the same Banach space and is zero
for real $\br\in\mathfrak T_\delta$.
Uniqueness of analytic continuation in Banach spaces
\cite[Theorem~5.34]{DudleyNorvaisa2011} therefore shows that this
bilinear form vanishes
on each ball
$B(\br_0,\delta)$, $\br_0\in\mathfrak T$, and hence on
$\mathfrak T_\delta$.
Thus $\mathsf B_\br$ satisfies the hypothesis~\eqref{eq:symmetry_Lax}
of Lemma~\ref{lem:c11-sobolev-extension}, with the product of scalar
functions replaced by the bilinear dot product, for every
$\br\in\mathfrak T_\delta$.

To extend $\mathsf B_\br$ to the Sobolev spaces, we apply the proof of
Lemma~\ref{lem:c11-sobolev-extension} to two component fields, with
$\widehat{\mathsf V}_0$ acting diagonally.
By Lemma~\ref{lem:c11-sobolev-extension}, item~(ii), $\mathcal E$
is a bounded, complex linear map from the operators in
$\mathscr L((\mathscr C^{0,\alpha}(\widehat\Gamma))^2,
            (\mathscr C^{1,\alpha}(\widehat\Gamma))^2)$
satisfying the symmetry hypothesis~\eqref{eq:symmetry_Lax}, with the
bilinear dot product, to
$\mathscr L((H^{-\frac12}(\widehat\Gamma))^2,
            (H^{\frac12}(\widehat\Gamma))^2)$.
Let $\br\in\mathfrak T_\delta$ and let
$\boldsymbol h\in\mathscr C^{1,1}(\widehat\Gamma;\mathbb C^3)$ with
$\norm{\boldsymbol h}_{\mathscr C^{1,1}(\widehat\Gamma;\mathbb C^3)}\to0$ and
$\br+\boldsymbol h\in\mathfrak T_\delta$.
Here $\mathrm D_\br\mathsf B_\br[\boldsymbol h]$ denotes the complex
Fr\'echet derivative of the map $\br\mapsto\mathsf B_\br$ at $\br$,
evaluated in the direction $\boldsymbol h$.
Differentiating the symmetry of $\mathsf b_\br$ with respect to $\br$
shows that $\mathrm D_\br\mathsf B_\br[\boldsymbol h]$ satisfies
the hypothesis~\eqref{eq:symmetry_Lax} of
Lemma~\ref{lem:c11-sobolev-extension}, with the bilinear dot product.
This condition is preserved by linear combinations, so it also holds
for
\begin{equation}
 \mathsf R_\br(\boldsymbol h)
 \coloneqq\mathsf B_{\br+\boldsymbol h}-\mathsf B_\br
 -\mathrm D_\br\mathsf B_\br[\boldsymbol h].
\end{equation}
Complex Fr\'echet differentiability of $\br\mapsto\mathsf B_\br$
gives the remainder estimate in the H\"older operator norm.
Lemma~\ref{lem:c11-sobolev-extension}, item~(ii), transfers this
estimate to the Sobolev operator norm:
\begin{equation}
 \begin{aligned}
 \|\mathcal E(\mathsf R_\br(\boldsymbol h))\|_
 {\mathscr L((H^{-\frac12}(\widehat\Gamma))^2,
             (H^{\frac12}(\widehat\Gamma))^2)}
 &\leq C(\widehat\Gamma,\alpha)\|\mathsf R_\br(\boldsymbol h)\|_
 {\mathscr L((\mathscr C^{0,\alpha}(\widehat\Gamma))^2,
             (\mathscr C^{1,\alpha}(\widehat\Gamma))^2)}\\
 &=o\bigl(\norm{\boldsymbol h}_
 {\mathscr C^{1,1}(\widehat\Gamma;\mathbb C^3)}\bigr).
 \end{aligned}
\end{equation}
Boundedness and complex linearity of $\mathcal E$ imply that the map
$\boldsymbol h\mapsto\mathcal E(\mathrm D_\br\mathsf B_\br[\boldsymbol h])$
is bounded and complex linear in $\boldsymbol h$.
Together with the preceding remainder estimate, this proves that the map
$\mathfrak T_\delta\to
\mathscr L((H^{-\frac12}(\widehat\Gamma))^2,
           (H^{\frac12}(\widehat\Gamma))^2):
\br\mapsto\mathcal E(\mathsf B_\br)$
is complex Fr\'echet differentiable, with derivative
\begin{equation}
 \mathrm D_\br[\mathcal E(\mathsf B_\br)][\boldsymbol h]
 =\mathcal E(\mathrm D_\br\mathsf B_\br[\boldsymbol h]).
\end{equation}
It is therefore bounded and holomorphic on $\mathfrak T_\delta$.

For each $\br\in\mathfrak T_\delta$, define the extension to the Sobolev spaces by
\begin{equation}
 \widehat{\mathsf K}_\br^{(\kappa)}
 \coloneqq(\mathcal E(\mathsf B_\br))_{21}\widehat{\mathsf V}_0^{-1},
\end{equation}
where $(\cdot)_{21}$ denotes the block in row $2$ and column $1$, and
$\widehat{\mathsf V}_0^{-1}\in\mathscr L(
H^{\frac12}(\widehat\Gamma),H^{-\frac12}(\widehat\Gamma))$.
Taking the block in row $2$ and column $1$ and composing it with the
fixed operator $\widehat{\mathsf V}_0^{-1}$ are bounded, complex linear
operations. Hence the map
$\mathfrak T_\delta\to
\mathscr L(H^{\frac12}(\widehat\Gamma),H^{\frac12}(\widehat\Gamma)):
\br\mapsto\widehat{\mathsf K}_\br^{(\kappa)}$
is complex Fr\'echet differentiable,
with derivative
\begin{equation}
 \mathrm D_\br\widehat{\mathsf K}_\br^{(\kappa)}[\boldsymbol h]
 =\bigl(\mathcal E(\mathrm D_\br\mathsf B_\br[\boldsymbol h])\bigr)_{21}
 \widehat{\mathsf V}_0^{-1}.
\end{equation}
For each $\br\in\mathfrak T_\delta$ and
$u\in\mathscr C^{1,\alpha}(\widehat\Gamma)$, we have
$\widehat{\mathsf V}_0^{-1}u\in\mathscr C^{0,\alpha}(\widehat\Gamma)$.
Applying Lemma~\ref{lem:c11-sobolev-extension}, item~(iii), to
$(\widehat{\mathsf V}_0^{-1}u,0)$ gives
\[
 (\mathcal E(\mathsf B_\br))_{21}\widehat{\mathsf V}_0^{-1}u
 =(\mathsf B_\br)_{21}\widehat{\mathsf V}_0^{-1}u
 =\widehat{\mathsf K}_\br^{(\kappa)}u.
\]
Hence the extension to the Sobolev spaces agrees with the H\"older extension on
$\mathscr C^{1,\alpha}(\widehat\Gamma)$ for each $\br\in\mathfrak T_\delta$.
For $\br\in\mathfrak T$, both this extension and the original BIO
are bounded on $H^{\frac12}(\widehat\Gamma)$ and agree on
$\mathscr C^{1,\alpha}(\widehat\Gamma)$.
Density of
$\mathscr C^{1,\alpha}(\widehat\Gamma)$ in
$H^{\frac12}(\widehat\Gamma)$ therefore shows that they coincide on
$H^{\frac12}(\widehat\Gamma)$.
The estimate in Lemma~\ref{lem:c11-sobolev-extension}, item~(ii),
and boundedness of $\widehat{\mathsf V}_0^{-1}$ give uniform boundedness
on $\mathfrak T_\delta$.
Hence the map
$\mathfrak T\ni\br\mapsto\widehat{\mathsf K}_\br^{(\kappa)}
\in\mathscr L(H^{\frac12}(\widehat\Gamma),H^{\frac12}(\widehat\Gamma))$
admits a bounded holomorphic extension to $\mathfrak T_\delta$.
\end{proof}

\subsection{The Adjoint Double Layer BIO}
\label{sec:c11-adjoint-double-layer-holomorphy}

For each $\br\in\mathfrak T$ and $0<\alpha<1$,
the Helmholtz adjoint double layer BIO in~\eqref{eq:c11-scalar-bios}
has the explicit expression
\begin{equation}
	(\mathsf K_{\Gamma_\br}^{(\kappa)\prime}\mu)(\mathbf{x})
	=
	\int_{\Gamma_\br}
	\partial_{\boldsymbol n_\br(\mathbf{x})}
	G^{(\kappa)}(\mathbf{x},\mathbf{y})\mu(\mathbf{y})\,\dd s_{\mathbf{y}},
	\quad\forall\mathbf{x}\in\Gamma_\br,
\end{equation}
for all $\mu\in\mathscr C^{0,\alpha}(\Gamma_\br)$.
It is bounded on $\mathscr C^{0,\alpha}(\Gamma_\br)$
\cite[Lemma~2.5, item~(iii)]{DallaRivaLuzziniMusolino2022}.
Its extension to the Sobolev spaces is bounded on $H^{-\frac12}(\Gamma_\br)$
by~\eqref{eq:c11-scalar-bios}.
By Lemma~\ref{lem:c11-pullback-isomorphisms}, its pullback
$\widehat{\mathsf K}_\br^{(\kappa)\prime}
=\tau_\br\mathsf K_{\Gamma_\br}^{(\kappa)\prime}\tau_\br^{-1}$
has the same H\"older and Sobolev mapping properties on $\widehat\Gamma$.

Its shape holomorphy follows from
Proposition~\ref{prop:c11-double-layer-holomorphy} and the duality
relation~\eqref{eq:c11-scalar-pullback-duality}.

\begin{proposition}[Shape holomorphy of the adjoint double layer BIO]
\label{prop:c11-adjoint-double-layer-holomorphy}
Let $\mathfrak T$ satisfy Assumption~\ref{ass:c11-deformations}.
Then there exists $\delta>0$ such that the map
\begin{equation}
	\widehat{\mathcal K}^{(\kappa)\prime}:
	\mathfrak T\to
	\mathscr L\bigl(H^{-\frac12}(\widehat\Gamma),H^{-\frac12}(\widehat\Gamma)\bigr):
	\br\mapsto\widehat{\mathsf K}_\br^{(\kappa)\prime},
\end{equation}
admits a bounded holomorphic extension to $\mathfrak T_\delta$.
\end{proposition}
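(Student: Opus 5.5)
We derive the result from Proposition~\ref{prop:c11-double-layer-holomorphy} via the duality relation~\eqref{eq:c11-scalar-pullback-duality}, rewritten as
\begin{equation}
	\widehat{\mathsf K}_\br^{(\kappa)\prime}
	=J_{\text S,\br}^{-1}\bigl(\widehat{\mathsf K}_\br^{(\kappa)}\bigr)'J_{\text S,\br},
	\quad\br\in\mathfrak T,
\end{equation}
where $(\cdot)'$ is the dual with respect to $\langle\cdot,\cdot\rangle_{\widehat\Gamma}$. With $H^{\frac12}(\widehat\Gamma)'=H^{-\frac12}(\widehat\Gamma)$ via this bilinear pairing, $(\widehat{\mathsf K}_\br^{(\kappa)})'\in\mathscr L(H^{-\frac12}(\widehat\Gamma),H^{-\frac12}(\widehat\Gamma))$. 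We therefore define the extension to $\mathfrak T_\delta$ by the right-hand side, using the holomorphic extension of $\br\mapsto\widehat{\mathsf K}_\br^{(\kappa)}$ from Proposition~\ref{prop:c11-double-layer-holomorphy} and the holomorphic extensions of $J_{\text S,\br}$ and $J_{\text S,\br}^{-1}$ from Lemma~\ref{lem:c11-geometric-holomorphy}(i).

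First, we take $\delta$ to be the minimum of the radii in Proposition~\ref{prop:c11-double-layer-holomorphy} and Lemma~\ref{lem:c11-geometric-holomorphy}. The map $T\mapsto T'$ from $\mathscr L(H^{\frac12}(\widehat\Gamma))$ to $\mathscr L(H^{-\frac12}(\widehat\Gamma))$ is bounded and complex linear, as noted in the Notation subsection. Composing it with a bounded holomorphic map gives a bounded holomorphic map $\br\mapsto(\widehat{\mathsf K}_\br^{(\kappa)})'$ on $\mathfrak T_\delta$.

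Next, we show that multiplication by $q\in\mathscr C^{0,1}(\widehat\Gamma)$ defines a bounded, complex linear map $\mathscr C^{0,1}(\widehat\Gamma)\to\mathscr L(H^{-\frac12}(\widehat\Gamma))$, $q\mapsto M_q$, with $\|M_q\|\le C\|q\|_{\mathscr C^{0,1}(\widehat\Gamma)}$. Lemma~\ref{lem:c11-jacobian-multiplier} gives boundedness of these multipliers on $H^s$, but we need the quantitative, linear dependence on the Lipschitz norm of $q$ for complex-valued $q$, because the extended $J_{\text S,\br}^{\pm1}$ are complex. This is the main point to verify. First we would show that $M_q$ is bounded on $L^2$ and on $H^1(\widehat\Gamma)$ with norms $\lesssim\|q\|_{\mathscr C^{0,1}}$, using the Leibniz rule for $\nabla_{\widehat\Gamma}$. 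Interpolation then gives the bound on $H^{\frac12}$, and duality with respect to the bilinear pairing ($M_q'=M_q$) gives the bound on $H^{-\frac12}$. If the appendix proof of Lemma~\ref{lem:c11-jacobian-multiplier} already proceeds this way, we cite it directly. Since $q\mapsto M_q$ is bounded and linear, $\br\mapsto M_{J_{\text S,\br}^{\pm1}}$ is holomorphic and bounded on $\mathfrak T_\delta$.

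Finally, composition $(A,B,C)\mapsto ABC$ is a bounded trilinear map on operator spaces, so the product of the three holomorphic factors is holomorphic, with the uniform bound given by the product of the three uniform bounds. For real $\br\in\mathfrak T$, the extension agrees with $\widehat{\mathsf K}_\br^{(\kappa)\prime}$ by~\eqref{eq:c11-scalar-pullback-duality}. The Jacobian multipliers are invertible on $H^{-\frac12}(\widehat\Gamma)$ by Lemma~\ref{lem:c11-jacobian-multiplier}, so the relation can be solved for $\widehat{\mathsf K}_\br^{(\kappa)\prime}$. This yields the required bounded holomorphic extension.
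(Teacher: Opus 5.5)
Your proposal is correct and follows the paper's proof essentially step for step. Both define the extension on $\mathfrak T_\delta$ by $J_{\text S,\br}^{-1}\bigl(\widehat{\mathsf K}_\br^{(\kappa)}\bigr)'J_{\text S,\br}$ and use three facts: the bilinear dual is a bounded complex linear operation, the Jacobian multipliers depend holomorphically on $\br$ (Lemmas~\ref{lem:c11-jacobian-multiplier} and~\ref{lem:c11-geometric-holomorphy}), and composition is bounded multilinear. The multiplier estimate you planned to verify, linear in $\|q\|_{\mathscr C^{0,1}(\widehat\Gamma)}$ for complex $q$, is already proved for general $m\in\mathscr C^{0,1}(\widehat\Gamma)$ in Appendix~\ref{sec:proof_c11-jacobian-multiplier}, so you can cite it directly.
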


\begin{proof}
There exists $\delta>0$ such that the bounded holomorphic extensions in
Proposition~\ref{prop:c11-double-layer-holomorphy} and
Lemma~\ref{lem:c11-geometric-holomorphy} are defined on $\mathfrak T_\delta$.
For each $\br\in\mathfrak T_\delta$, we define
\begin{equation}\label{eq:adjoint_DL_ref}
	\widehat{\mathsf K}_\br^{(\kappa)\prime}
	\coloneqq J_{\text S,\br}^{-1}
	\bigl(\widehat{\mathsf K}_\br^{(\kappa)}\bigr)'
	J_{\text S,\br},
	\quad\forall\br\in\mathfrak T_\delta.
\end{equation}
Here the prime outside the parentheses denotes the reference dual,
which is defined using the bilinear pairing, without complex conjugation.
Because the pairing is bilinear, taking the dual is a bounded complex
linear operation from
$\mathscr L(H^{\frac12}(\widehat\Gamma),H^{\frac12}(\widehat\Gamma))$
to $\mathscr L(H^{-\frac12}(\widehat\Gamma),H^{-\frac12}(\widehat\Gamma))$.
By Lemmas~\ref{lem:c11-jacobian-multiplier}
and~\ref{lem:c11-geometric-holomorphy}, the maps
$\mathfrak T\to
\mathscr L(H^{-\frac12}(\widehat\Gamma),H^{-\frac12}(\widehat\Gamma)):
\br\mapsto\bigl(u\mapsto J_{\text S,\br}u\bigr)$ and
$\mathfrak T\to
\mathscr L(H^{-\frac12}(\widehat\Gamma),H^{-\frac12}(\widehat\Gamma)):
\br\mapsto\bigl(u\mapsto J_{\text S,\br}^{-1}u\bigr)$
admit bounded holomorphic extensions to $\mathfrak T_\delta$.
Since operator composition is continuous and multilinear,
\eqref{eq:adjoint_DL_ref} defines the bounded holomorphic map
$\mathfrak T_\delta\to
\mathscr L(H^{-\frac12}(\widehat\Gamma),H^{-\frac12}(\widehat\Gamma)):
\br\mapsto\widehat{\mathsf K}_\br^{(\kappa)\prime}$.
For each $\br\in\mathfrak T_\delta$, $\mu\in H^{-\frac12}(\widehat\Gamma)$, and
$v\in H^{\frac12}(\widehat\Gamma)$, its defining pairing is
\[
 \begin{aligned}
 \langle\widehat{\mathsf K}_\br^{(\kappa)\prime}\mu,v\rangle_{\widehat\Gamma}
 &=\langle J_{\text S,\br}\mu,
 \widehat{\mathsf K}_\br^{(\kappa)}
 (J_{\text S,\br}^{-1}v)\rangle_{\widehat\Gamma}.
 \end{aligned}
\]
For $\br\in\mathfrak T$, relation~\eqref{eq:c11-scalar-pullback-duality}
identifies the right-hand side with the pairing of the transported
adjoint double layer BIO with $v$.
Since this holds for every $\mu$ and $v$ in these Sobolev spaces,
the map
$\mathfrak T\ni\br\mapsto\widehat{\mathsf K}_\br^{(\kappa)\prime}
\in\mathscr L(H^{-\frac12}(\widehat\Gamma),H^{-\frac12}(\widehat\Gamma))$
admits a bounded holomorphic extension to $\mathfrak T_\delta$ defined
by~\eqref{eq:adjoint_DL_ref}.
\end{proof}

\subsection{The Hypersingular BIO}
\label{sec:c11-hypersingular-holomorphy}

For each $\br\in\mathfrak T$, we define the vector surface curl by
$\curl_{\Gamma_\br} u\coloneqq\boldsymbol n_\br\times\nabla_{\Gamma_\br} u$.
It is bounded from $H^{\frac12}(\Gamma_\br)$ to
$\boldsymbol H^{-\frac12}(\Gamma_\br)$.
Here $\boldsymbol H^s(\Gamma_\br)\coloneqq (H^s(\Gamma_\br))^3$.
The single layer BIOs $\mathsf V_{\Gamma_\br}^{(\kappa)}$ and
$\widehat{\mathsf U}_\br^{(\kappa)}$ act componentwise on vector fields.
Vector duality pairings are sums of the three scalar pairings.
The regularized representation of the hypersingular BIO is
\begin{equation}
    \begin{aligned}
	\langle\mathsf W_{\Gamma_\br}^{(\kappa)}u,v\rangle_{\Gamma_\br}
	&=\langle\curl_{\Gamma_\br} v,
	   \mathsf V_{\Gamma_\br}^{(\kappa)}\curl_{\Gamma_\br} u\rangle_{\Gamma_\br}\\
	&\quad-\kappa^2\langle\boldsymbol n_\br v,
	   \mathsf V_{\Gamma_\br}^{(\kappa)}(\boldsymbol n_\br u)\rangle_{\Gamma_\br},
    \end{aligned}
	\label{eq:c11-hypersingular-regularization}
\end{equation}
for all $u,v\in H^{\frac12}(\Gamma_\br)$.
See \cite[Section~6.9, Eq.~(6.68)]{Steinbach2008}.

We write $F_\br^{-\top}\coloneqq(F_\br^{-1})^\top$.
For each real-valued admissible boundary transformation, the tangential
chain rule gives
\begin{equation}
	\tau_\br\curl_{\Gamma_\br}\tau_\br^{-1}\widehat u
	=\boldsymbol N_\br\times
	 F_\br^{-\top}\nabla_{\widehat\Gamma}\widehat u
	\quad\text{and}\quad
	\tau_\br(\boldsymbol n_\br\tau_\br^{-1}\widehat u)
	=\boldsymbol N_\br\widehat u.
	\label{eq:c11-surface-curl-transport}
\end{equation}
Both identities hold in $\boldsymbol H^{-\frac12}(\widehat\Gamma)$.

For each $\br\in\mathfrak T$, we define the auxiliary maps
$\mathsf C_\br,\mathsf N_\br:
H^{\frac12}(\widehat\Gamma)\to\boldsymbol H^{-\frac12}(\widehat\Gamma)$ by
\begin{equation}
	\mathsf C_\br\widehat u\coloneqq
	J_{\text S,\br}\boldsymbol N_\br\times
	F_\br^{-\top}\nabla_{\widehat\Gamma}\widehat u
	\quad\text{and}\quad
	\mathsf N_\br\widehat u\coloneqq
	J_{\text S,\br}\boldsymbol N_\br\widehat u.
\end{equation}
Both maps are bounded by the surface gradient and Lipschitz
multiplier estimates. The distributional pullback
\eqref{eq:c11-distribution-pullbacks} and the change of variables in
\eqref{eq:c11-hypersingular-regularization} give
\begingroup
\small
\begin{equation}
	\langle\widehat{\mathsf W}_\br^{(\kappa)}\widehat u,\widehat v\rangle_{\widehat\Gamma}
	=
	\langle\mathsf C_\br(J_{\text S,\br}^{-1}\widehat v),
	  \widehat{\mathsf U}_\br^{(\kappa)}
	   \mathsf C_\br\widehat u\rangle_{\widehat\Gamma}
	-\kappa^2\langle\mathsf N_\br(J_{\text S,\br}^{-1}\widehat v),
	  \widehat{\mathsf U}_\br^{(\kappa)}
	   \mathsf N_\br\widehat u\rangle_{\widehat\Gamma},
	\label{eq:c11-complex-hypersingular-definition}
\end{equation}
\endgroup
for all $\widehat u,\widehat v\in H^{\frac12}(\widehat\Gamma)$.
The factor $J_{\text S,\br}^{-1}$ multiplying $\widehat v$ comes from
the negative-order pullback $\tau_\br$ in
$\widehat{\mathsf W}_\br^{(\kappa)}
=\tau_\br\mathsf W_{\Gamma_\br}^{(\kappa)}\tau_\br^{-1}$,
as defined by~\eqref{eq:c11-distribution-pullbacks}.

\begin{proposition}[Shape holomorphy of the hypersingular BIO]
\label{prop:c11-hypersingular-holomorphy}
Let $\mathfrak T$ satisfy Assumption~\ref{ass:c11-deformations}.
Then there exists $\delta>0$ such that the map
\begin{equation}
	\widehat{\mathcal W}^{(\kappa)}:
	\mathfrak T\to
	\mathscr L\bigl(H^{\frac12}(\widehat\Gamma),H^{-\frac12}(\widehat\Gamma)\bigr):
	\br\mapsto\widehat{\mathsf W}_\br^{(\kappa)},
\end{equation}
admits a bounded holomorphic extension to $\mathfrak T_\delta$.
\end{proposition}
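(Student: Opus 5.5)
The plan is to use the regularized representation \eqref{eq:c11-complex-hypersingular-definition} directly as the definition of the extension. For $\br\in\mathfrak T_\delta$, we define $\widehat{\mathsf W}_\br^{(\kappa)}\in\mathscr L(H^{\frac12}(\widehat\Gamma),H^{-\frac12}(\widehat\Gamma))$ through its right-hand side. In that expression, $\widehat{\mathsf U}_\br^{(\kappa)}$ is replaced by the holomorphic extension from Proposition~\ref{prop:c11-singlelayer-holomorphy}, acting componentwise. The geometric quantities $J_{\text S,\br}$, $J_{\text S,\br}^{-1}$, $\boldsymbol N_\br$ and $F_\br^{-1}$ are replaced by their extensions from Lemma~\ref{lem:c11-geometric-holomorphy}. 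First choose $\delta>0$ so small that all these extensions are defined and bounded on a common $\mathfrak T_\delta$. For real $\br\in\mathfrak T$, the definition agrees with the transported hypersingular BIO by \eqref{eq:c11-surface-curl-transport} and the derivation of \eqref{eq:c11-complex-hypersingular-definition}.

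The key step is to show that $\br\mapsto\mathsf C_\br$ and $\br\mapsto\mathsf N_\br$ are bounded holomorphic maps $\mathfrak T_\delta\to\mathscr L(H^{\frac12}(\widehat\Gamma),\boldsymbol H^{-\frac12}(\widehat\Gamma))$. Write $\mathsf C_\br\widehat u = M_\br\nabla_{\widehat\Gamma}\widehat u$. Here $M_\br\coloneqq J_{\text S,\br}[\boldsymbol N_\br]_\times F_\br^{-\top}\in\mathscr C^{0,1}(\widehat\Gamma;\mathbb C^{3\times3})$, and $[\cdot]_\times$ denotes the cross-product matrix, which is linear. The map $\br\mapsto M_\br$ is holomorphic into $\mathscr C^{0,1}$. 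This follows because $\mathscr C^{0,1}(\widehat\Gamma)$ is a Banach algebra, so pointwise products and transposition are bounded multilinear maps, and compositions of holomorphic maps with such maps are holomorphic. Next, the multiplication map $\mathscr C^{0,1}(\widehat\Gamma;\mathbb C^{3\times3})\to\mathscr L(\boldsymbol H^{-\frac12}(\widehat\Gamma))$, $M\mapsto(\boldsymbol w\mapsto M\boldsymbol w)$, is bounded and linear. This comes from the Lipschitz multiplier estimate behind Lemma~\ref{lem:c11-jacobian-multiplier}, where the bound is linear in $\|M\|_{\mathscr C^{0,1}}$; I would state it explicitly, proving it by interpolation between $L^2$ and $H^1$ and then using duality. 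Composing with the fixed operator $\nabla_{\widehat\Gamma}\in\mathscr L(H^{\frac12},\boldsymbol H^{-\frac12})$ gives holomorphy of $\mathsf C_\br$. The same argument, using the bounded multiplication $\mathscr C^{0,1}\to\mathscr L(H^{\frac12},H^{\frac12})\hookrightarrow\mathscr L(H^{\frac12},H^{-\frac12})$, gives holomorphy of $\mathsf N_\br$. The factor $\widehat v\mapsto J_{\text S,\br}^{-1}\widehat v$ on $H^{\frac12}$ is handled in the same way.

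Finally, the bilinear pairing $\langle\boldsymbol w,\boldsymbol z\rangle_{\widehat\Gamma}$ for $\boldsymbol w\in\boldsymbol H^{-\frac12}$ and $\boldsymbol z\in\boldsymbol H^{\frac12}$ induces a bounded complex multilinear map
\[
(\mathsf A,\mathsf U,\mathsf B,\mathsf D)\mapsto\bigl(\widehat u\mapsto \mathsf A'\,\mathsf U\,\mathsf B\,\widehat u\bigr)\circ\text{(with $\mathsf D$ applied to $\widehat v$)}.
\]
Concretely, $\widehat{\mathsf W}_\br^{(\kappa)}=(\mathsf C_\br J_{\text S,\br}^{-1})'\,\widehat{\mathsf U}_\br^{(\kappa)}\,\mathsf C_\br-\kappa^2(\mathsf N_\br J_{\text S,\br}^{-1})'\,\widehat{\mathsf U}_\br^{(\kappa)}\,\mathsf N_\br$. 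Here $'$ is the bilinear dual, a bounded complex-linear map $\mathscr L(H^{\frac12},\boldsymbol H^{-\frac12})\to\mathscr L(\boldsymbol H^{\frac12},H^{-\frac12})$. Because the dual is complex-linear and composition is continuous and multilinear, the result is a bounded holomorphic map on $\mathfrak T_\delta$, with the bound given by products of the uniform bounds of the factors.

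I expect the main obstacle to be the step above: verifying that the multiplier bound on $\boldsymbol H^{-\frac12}$ depends linearly and boundedly on the $\mathscr C^{0,1}$ norm of a \emph{complex} matrix coefficient. This is needed so that holomorphy in $\mathscr C^{0,1}$ transfers to operator-norm holomorphy, and it also needs care with tangential projections once $\boldsymbol N_\br$ is complex. I would handle it by bounding each matrix entry as a scalar multiplier, first on $L^2$ and $H^1$ and then by interpolation and duality. Tangentiality never enters, because the formula is applied componentwise.
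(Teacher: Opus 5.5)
Your proposal is correct and follows essentially the paper's route. Like the paper, you define the extension by the regularized formula \eqref{eq:c11-complex-hypersingular-definition}, inserting the holomorphic extensions of $\widehat{\mathsf U}_\br^{(\kappa)}$, $J_{\text S,\br}^{\pm1}$, $\boldsymbol N_\br$ and $F_\br^{-1}$; you get holomorphy of $\br\mapsto\mathsf C_\br,\mathsf N_\br$ from the Lipschitz multiplier estimate, whose bound is linear in the $\mathscr C^{0,1}$ norm, and conclude by multilinearity of composition and the pairing. The only difference is cosmetic: you write the operator explicitly through bilinear duals, whereas the paper shows holomorphy of the bilinear form and then identifies bounded bilinear forms with operators.
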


\begin{proof}
Let $\delta>0$ be sufficiently small so that the extensions in
Proposition~\ref{prop:c11-singlelayer-holomorphy} and
Lemma~\ref{lem:c11-geometric-holomorphy} are defined on $\mathfrak T_\delta$.
Lemma~\ref{lem:c11-geometric-holomorphy} and the multiplier estimate
in Appendix~\ref{sec:proof_c11-jacobian-multiplier} show that the maps
$\mathfrak T\to
\mathscr L(H^{\frac12}(\widehat\Gamma),
           \boldsymbol H^{-\frac12}(\widehat\Gamma)):
\br\mapsto\mathsf C_\br$ and
$\mathfrak T\to
\mathscr L(H^{\frac12}(\widehat\Gamma),
           \boldsymbol H^{-\frac12}(\widehat\Gamma)):
\br\mapsto\mathsf N_\br$
admit bounded holomorphic extensions to $\mathfrak T_\delta$.
The map $\mathfrak T\to
\mathscr L(H^{\frac12}(\widehat\Gamma),H^{\frac12}(\widehat\Gamma)):
\br\mapsto\bigl(\widehat v\mapsto J_{\text S,\br}^{-1}\widehat v\bigr)$
admits a bounded holomorphic extension to $\mathfrak T_\delta$ as well.
For each $\br\in\mathfrak T_\delta$, we define
$\widehat{\mathsf W}_\br^{(\kappa)}$ by
\eqref{eq:c11-complex-hypersingular-definition}.
Since composition and the pairings are bounded multilinear operations,
the right-hand side defines a bounded holomorphic map with values in
the space of bilinear forms on
$H^{\frac12}(\widehat\Gamma)\times H^{\frac12}(\widehat\Gamma)$.
The identification of bounded bilinear forms with operators therefore shows
that the map
$\mathfrak T\ni\br\mapsto\widehat{\mathsf W}_\br^{(\kappa)}
\in\mathscr L(H^{\frac12}(\widehat\Gamma),H^{-\frac12}(\widehat\Gamma))$
admits a bounded holomorphic extension to $\mathfrak T_\delta$.
\end{proof}

\section{Shape Holomorphy of the Maxwell EFIE}
\label{sec:c11-efie}
We establish shape holomorphy of the EFIE operator on closed surfaces,
its solution map, and the electric far-field. We first introduce the
Maxwell trace spaces and the surface Piola transformation. The principal
ingredient is shape holomorphy of the single layer BIO, established in
Proposition~\ref{prop:c11-singlelayer-holomorphy}.

\subsection{Maxwell Trace Spaces and the Surface Piola Transformation}
\label{sec:c11-maxwell-traces}
For the following definitions and properties of Maxwell trace spaces,
we follow \cite[Sections~2--5]{BCS2002}.
Let $\text{D}\subset\IR^3$ be a bounded, simply connected $\mathscr C^{1,1}$ domain with
connected boundary $\Gamma$
and outward unit normal $\boldsymbol n$.
Since $\boldsymbol n\in\mathscr C^{0,1}(\Gamma;\mathbb R^3)$, the
multiplier estimate proved in Appendix~\ref{app:c11-jacobian}, applied
on $\Gamma$, shows that multiplication by its components is bounded on
$H^{-\frac12}(\Gamma)$.
Hence $\boldsymbol j\cdot\boldsymbol n\in H^{-\frac12}(\Gamma)$ for every
$\boldsymbol j\in\boldsymbol H^{-\frac12}(\Gamma)$.
For tangential fields, surface divergence is understood distributionally:
\begin{equation}
	\langle\operatorname{div}_\Gamma\boldsymbol j,\phi\rangle_{\Gamma}
	=-\langle\boldsymbol j,\nabla_\Gamma\phi\rangle_{\Gamma}.
\end{equation}
This identity holds for all $\phi\in\mathscr C^{1,1}(\Gamma)$.
The scalar surface curl is defined as
$\operatorname{curl}_\Gamma\boldsymbol\eta
\coloneqq\operatorname{div}_\Gamma(\boldsymbol\eta\times\boldsymbol n)$.
We define the tangential divergence and curl spaces by
\begin{equation}
	\begin{aligned}
	\boldsymbol H^{-\frac12}(\operatorname{div}_\Gamma,\Gamma)
	&\coloneqq\bigl\{\boldsymbol j\in\boldsymbol H^{-\frac12}(\Gamma):
	\boldsymbol j\cdot\boldsymbol n=0,\quad
	\operatorname{div}_\Gamma\boldsymbol j\in H^{-\frac12}(\Gamma)\bigr\},\\
	\boldsymbol H^{-\frac12}(\operatorname{curl}_\Gamma,\Gamma)
	&\coloneqq\bigl\{\boldsymbol\eta\in\boldsymbol H^{-\frac12}(\Gamma):
	\boldsymbol\eta\cdot\boldsymbol n=0,\quad
	\operatorname{curl}_\Gamma\boldsymbol\eta\in H^{-\frac12}(\Gamma)\bigr\}.
	\end{aligned}
\end{equation}
For the sake of simplicity, we write
$X(\Gamma)\coloneqq\boldsymbol H^{-\frac12}(\operatorname{div}_\Gamma,\Gamma)$
and
$Y(\Gamma)\coloneqq\boldsymbol H^{-\frac12}(\operatorname{curl}_\Gamma,\Gamma)$,
and equip these spaces with the norms
\begin{equation}
	\begin{aligned}
	\|\boldsymbol j\|_{X(\Gamma)}^2
	&\coloneqq\|\boldsymbol j\|_{\boldsymbol H^{-\frac12}(\Gamma)}^2
	+\|\operatorname{div}_\Gamma\boldsymbol j\|_{H^{-\frac12}(\Gamma)}^2,
	\quad \boldsymbol j\in X(\Gamma)
	\\
	\|\boldsymbol\eta\|_{Y(\Gamma)}^2
	&\coloneqq\|\boldsymbol\eta\|_{\boldsymbol H^{-\frac12}(\Gamma)}^2
	+\|\operatorname{curl}_\Gamma\boldsymbol\eta\|_{H^{-\frac12}(\Gamma)}^2,
	\quad
	\boldsymbol\eta\in Y(\Gamma).
	\end{aligned}
	\label{eq:c11-maxwell-trace-norms}
\end{equation}

We use the standard space $\boldsymbol H(\curl,\text{D})$, equipped
with its norm. For an open set $\Omega\subset\mathbb R^3$, the notation
$\boldsymbol H_{\mathrm{loc}}(\curl,\overline\Omega)$ uses the cutoff
convention of Subsection~\ref{sec:c11-scalar-calderon-setting}. For all
$\boldsymbol u\in\mathscr C^1(\overline{\text{D}};\mathbb C^3)$,
the tangential traces are
\begin{equation}
	\gamma_\times\boldsymbol u
	\coloneqq \boldsymbol n\times\boldsymbol u|_\Gamma
	\quad\text{and}\quad
	\gamma_T\boldsymbol u
	\coloneqq \boldsymbol n\times(\boldsymbol u|_\Gamma\times\boldsymbol n).
\end{equation}
For $\mathscr C^{1,1}$ surfaces, these componentwise definitions agree with
the intrinsic Maxwell trace spaces.
The traces extend to bounded surjections
$\gamma_\times:\boldsymbol H(\curl,\text{D})\to X(\Gamma)$ and
$\gamma_T:\boldsymbol H(\curl,\text{D})\to Y(\Gamma)$.
As is customary, the space $Y(\Gamma)$ is identified with $X(\Gamma)'$ through the
continuous duality pairing
$
	\langle\cdot,\cdot\rangle_{Y(\Gamma)\times X(\Gamma)}:
	Y(\Gamma)\times X(\Gamma)\to\mathbb C,
$
given by
\begin{equation}
	\dual{\boldsymbol\eta}{\boldsymbol j}{Y(\Gamma)\times X(\Gamma)}
	=\int_\Gamma\boldsymbol\eta(\mathbf{x})\cdot
	             \boldsymbol j(\mathbf{x})\,\dd s_{\mathbf{x}}
	\label{eq:c11-maxwell-pairing}
\end{equation}
for all $\boldsymbol\eta\in Y(\Gamma)\cap\mathscr C^{0,1}(\Gamma;\mathbb C^3)$
and $\boldsymbol j\in X(\Gamma)\cap\mathscr C^{0,1}(\Gamma;\mathbb C^3)$.
Under this identification, the dual norm inherited from $X(\Gamma)'$
is equivalent to the norm on $Y(\Gamma)$ defined in
\eqref{eq:c11-maxwell-trace-norms}.
For each $\br\in\mathfrak T$, we set
\begin{equation}
	\widehat X\coloneqq X(\widehat\Gamma)
	\quad\text{and}\quad
	X_\br\coloneqq X(\Gamma_\br).
\end{equation}

We use the classical Piola transformation
\cite[Section~III.1.3]{BrezziFortin1991} in its surface form
\cite[Section~4.2]{BonitoDemlowLicht2020}.
For each $\br\in\mathfrak T$, the surface Piola pullback
$\mathcal P_\br:X_\br\to\widehat X$ and its inverse are given by
\begin{equation}
	\begin{aligned}
	\mathcal P_\br\boldsymbol j
	=J_{\text S,\br}F_\br^{-1}(\tau_\br\boldsymbol j)
	\quad
	\text{and}
	\quad
	\tau_\br(\mathcal P_\br^{-1}\widehat{\boldsymbol j})
	=J_{\text S,\br}^{-1}F_\br\widehat{\boldsymbol j},
    \quad
    \forall \boldsymbol j\in X_\br,\widehat{\boldsymbol j}\in\widehat X.
	\end{aligned}
	\label{eq:c11-surface-piola}
\end{equation}

Recall that $F_\br=D_{\widehat\Gamma}\br$ is the tangential differential
defined in~\eqref{eq:c11-tangential-differential}.
Its tangential inverse satisfies $F_\br^{-1}F_\br=\widehat P$ and
$F_\br^{-1}\boldsymbol N_\br=0$, as in
\eqref{eq:c11-tangential-inverse-convention}.
The Piola pullback and its inverse are bounded isomorphisms,
and, for each $\br \in \mathfrak{T}$, $\mathcal{P}_\br$ satisfies
\begin{equation}
	\divGh(\mathcal P_\br\boldsymbol j)
	=
	J_{\text S,\br}\tau_\br
	    (\operatorname{div}_{\Gamma_\br}\boldsymbol j).
	\label{eq:c11-piola-divergence}
\end{equation}
The definition of $F_\br^{-1}$ gives
$\widehat{\boldsymbol n}\cdot\mathcal P_\br\boldsymbol j=0$
in $H^{-\frac12}(\widehat\Gamma)$ for all $\boldsymbol j\in X_\br$.
For all $\boldsymbol j\in X_\br$ and
$\widehat\phi\in\mathscr C^{1,1}(\widehat\Gamma)$, the tangential chain rule
and~\eqref{eq:c11-distribution-pullbacks} yield
\begin{equation*}
 \begin{aligned}
 \langle\operatorname{div}_{\widehat\Gamma}
       (\mathcal P_\br\boldsymbol j),\widehat\phi\rangle_{\widehat\Gamma}
 &=\langle\operatorname{div}_{\Gamma_\br}\boldsymbol j,
          \tau_\br^{-1}\widehat\phi\rangle_{\Gamma_\br}\\
 &=\langle J_{\text S,\br}\tau_\br
          \operatorname{div}_{\Gamma_\br}\boldsymbol j,
          \widehat\phi\rangle_{\widehat\Gamma}.
 \end{aligned}
\end{equation*}
Thus~\eqref{eq:c11-piola-divergence} holds in
$H^{-\frac12}(\widehat\Gamma)$.
Moreover, $F_\br$ and $F_\br^{-1}$ have entries in
$\mathscr C^{0,1}(\widehat\Gamma)$. Applying the multiplier estimate of
Appendix~\ref{app:c11-jacobian} to each entry shows that multiplication
by either matrix field is bounded on
$\boldsymbol H^{-\frac12}(\widehat\Gamma)$.
The divergence transformation~\eqref{eq:c11-piola-divergence} and
these matrix multiplier bounds, together with
Lemmas~\ref{lem:c11-jacobian-multiplier}
and~\ref{lem:c11-pullback-isomorphisms}, with the norms defined
in~\eqref{eq:c11-maxwell-trace-norms}, yield
\begin{equation}
 \|\mathcal P_\br\boldsymbol j\|_{\widehat X}
 \leq C_\br\|\boldsymbol j\|_{X_\br},
 \quad \forall\boldsymbol j\in X_\br.
\end{equation}
For the inverse formula in~\eqref{eq:c11-surface-piola}, multiplication
by $J_{\text S,\br}^{-1}F_\br$ and the inverse pullback bound
$\mathcal P_\br^{-1}\widehat{\boldsymbol j}$ in
$\boldsymbol H^{-\frac12}(\Gamma_\br)$. Applying the divergence calculation
to the inverse surface diffeomorphism
$\br^{-1}:\Gamma_\br\to\widehat\Gamma$ bounds its surface divergence in
$H^{-\frac12}(\Gamma_\br)$.
Consequently,
\begin{equation}
 \|\mathcal P_\br^{-1}\widehat{\boldsymbol j}\|_{X_\br}
 \leq \widehat C_\br\|\widehat{\boldsymbol j}\|_{\widehat X},
 \quad\forall\widehat{\boldsymbol j}\in\widehat X.
\end{equation}
Since the fields are tangential, the formulas in
\eqref{eq:c11-surface-piola} show that $\mathcal P_\br$ and
$\mathcal P_\br^{-1}$ are mutually inverse.

\subsection{EFIE Formulation of Scattering by a Perfectly Conducting Obstacle}
\label{sec:c11-closed-formulation}
In the following, the superscripts $+$ and $-$ denote exterior and interior
traces, respectively, on either $\Gamma_\br$ or $\widehat\Gamma$.
On each surface, both traces use the normal pointing out of the
corresponding bounded domain.
For smooth fields, the crossed tangential trace is
$\gamma_\times^\pm\boldsymbol E
=\boldsymbol n_\br\times\boldsymbol E|_{\Gamma_\br}$.

We consider scattering by a perfectly conducting obstacle. Fix $\br\in\mathfrak T$ and set
$\text{D}_\br^{\text{c}}\coloneqq\mathbb R^3\setminus\overline{\text{D}_\br}$.
Let $\boldsymbol E^{\mathrm{inc}}\in\mathscr C^\infty(\mathbb R^3;\mathbb C^3)$
be an incident electric field satisfying
\begin{equation}
	\curl\curl\boldsymbol E^{\mathrm{inc}}
	-\kappa^2\boldsymbol E^{\mathrm{inc}}=0
	\quad\text{in }\mathbb R^3.
\end{equation}
For example, consider the plane wave
\begin{equation}
	\boldsymbol E^{\mathrm{inc}}(\mathbf{x})
	\coloneqq\boldsymbol p\exp(\ii\kappa\boldsymbol d\cdot\mathbf{x}),
	\quad\forall\mathbf{x}\in\mathbb R^3,
	\label{eq:c11-plane-wave}
\end{equation}
where $\boldsymbol d\in\mathbb R^3$ with $\|\boldsymbol d\|=1$
is the propagation direction and
$\boldsymbol p\in\mathbb C^3\setminus\{0\}$ is the polarization,
with $\boldsymbol p\cdot\boldsymbol d=0$.
For each $\br\in\mathfrak T$, we seek a scattered field
$\boldsymbol E_\br^{\mathrm{sc}}$
in $\boldsymbol H_{\mathrm{loc}}(\curl,\overline{\text{D}_\br^{\mathrm c}})$ such that
\begin{equation}
	\begin{aligned}
	\curl\curl\boldsymbol E_\br^{\mathrm{sc}}-\kappa^2\boldsymbol E_\br^{\mathrm{sc}}
	&=0 &&\text{in }\text{D}_\br^{\text{c}},\\
	\gamma_T^+\boldsymbol E_\br^{\mathrm{sc}}
	&=-\gamma_T\boldsymbol E^{\mathrm{inc}}
	&&\text{in }Y(\Gamma_\br)=X_\br'.
	\end{aligned}
	\label{eq:c11-closed-scattering}
\end{equation}
The outgoing field satisfies the
Silver--M\"uller radiation condition
\begin{equation}
	\lim_{R\to\infty}R\bigl(
	\curl\boldsymbol E_\br^{\mathrm{sc}}(R\boldsymbol\theta)\times\boldsymbol\theta
	-\ii\kappa\boldsymbol E_\br^{\mathrm{sc}}(R\boldsymbol\theta)\bigr)=0,
	\quad \forall\boldsymbol\theta\in\mathbb S^2,
	\label{eq:c11-radiation}
\end{equation}
uniformly in $\boldsymbol\theta$, where
$\mathbb S^2\coloneqq\{\boldsymbol\theta\in\mathbb R^3:\|\boldsymbol\theta\|=1\}$.
The boundary condition implies that the total field
$\boldsymbol E_\br^{\mathrm{tot}}\coloneqq
\boldsymbol E^{\mathrm{inc}}+\boldsymbol E_\br^{\mathrm{sc}}$
has zero tangential electric trace. This exterior boundary value problem
is uniquely solvable for all $\kappa>0$
\cite[Sections~1 and~7]{BuffaHiptmair2003}.

For each $\br\in\mathfrak T$ and density
$\boldsymbol j\in X_\br$, the electric potential generated by
$\boldsymbol j$ is defined using the scalar Helmholtz single layer
potential in~\eqref{eq:c11-scalar-potentials} by
\begin{equation}
	\mathsf{SL}_{\Gamma_\br}^{(\kappa)}\boldsymbol j
	\coloneqq\ii\kappa\mathsf S_{\Gamma_\br}^{(\kappa)}\boldsymbol j
	-\frac{1}{\ii\kappa}\nabla\mathsf S_{\Gamma_\br}^{(\kappa)}
	    \operatorname{div}_{\Gamma_\br}\boldsymbol j.
	\label{eq:c11-electric-potential}
\end{equation}
Here $\mathsf S_{\Gamma_\br}^{(\kappa)}$ acts componentwise on the density
$\boldsymbol j$ and as a scalar operator on its surface divergence.
The gradient term ensures that the resulting electric field
$\boldsymbol E=\mathsf{SL}_{\Gamma_\br}^{(\kappa)}\boldsymbol j$ is
divergence-free away from $\Gamma_\br$. This field satisfies
$\curl\curl\boldsymbol E-\kappa^2\boldsymbol E=0$ in $\mathbb R^3\setminus\Gamma_\br$
and the Silver--M\"uller radiation condition in
$\mathbb R^3\setminus\overline{\text{D}_\br}$.

To obtain the EFIE, we take the
tangential trace of this field. The maps
$\mathsf{SL}_{\Gamma_\br}^{(\kappa)}:X_\br\to
\boldsymbol H(\curl,\text{D}_\br)$ and
$\mathsf{SL}_{\Gamma_\br}^{(\kappa)}:X_\br\to
\boldsymbol H_{\mathrm{loc}}(\curl,\mathbb R^3\setminus\text{D}_\br)$
are continuous and linear \cite[Theorem~5]{BuffaHiptmair2003}, so both tangential traces are
well defined in $Y(\Gamma_\br)$. Moreover, the interior and exterior traces
$\gamma_T^\pm\mathsf{SL}_{\Gamma_\br}^{(\kappa)}\boldsymbol j$
agree \cite[Theorem~7]{BuffaHiptmair2003}. Thus either trace gives the
same EFIE operator under the identification $Y(\Gamma_\br)=X_\br'$.

For each $\br\in\mathfrak T$, we define the bilinear form for the EFIE
$\mathsf a_{\Gamma_\br}^{(\kappa)}:X_\br\times X_\br\to\mathbb C$ by
\begin{equation}
	\begin{aligned}
	\mathsf a_{\Gamma_\br}^{(\kappa)}(\boldsymbol j,\boldsymbol v)
	&\coloneqq
	\ii\kappa\,
	\dual{\boldsymbol v}
	{\mathsf V_{\Gamma_\br}^{(\kappa)}\boldsymbol j}
	{\Gamma_\br}
	\\
	&\quad+
	\frac{1}{\ii\kappa}
	\dual{\operatorname{div}_{\Gamma_\br}\boldsymbol v}
	{\mathsf V_{\Gamma_\br}^{(\kappa)}\operatorname{div}_{\Gamma_\br}\boldsymbol j}
	{\Gamma_\br},
	\quad
	\forall \boldsymbol j,\boldsymbol v\in X_\br.
	\end{aligned}
	\label{eq:c11-physical-efie-form}
\end{equation}
The single layer BIO $\mathsf V_{\Gamma_\br}^{(\kappa)}$ acts componentwise in the
first term and on the scalar surface divergence in the second.
The single layer mapping property and the norms defined
in~\eqref{eq:c11-maxwell-trace-norms} imply that this bilinear form is
bounded.
For each $\br\in\mathfrak T$, it therefore induces the operator
$\mathsf A_{\Gamma_\br}^{(\kappa)}\in
 \mathscr L(X_\br,X_\br')$ by
\begin{equation}
	\dual{\mathsf A_{\Gamma_\br}^{(\kappa)}\boldsymbol j}{\boldsymbol v}
	{X_\br'\times X_\br}
	\coloneqq 
	\mathsf a_{\Gamma_\br}^{(\kappa)}(\boldsymbol j,\boldsymbol v),
	\quad \forall\boldsymbol j,\boldsymbol v\in X_\br.
\end{equation}
Surface integration by parts gives
\begin{equation}
	\mathsf A_{\Gamma_\br}^{(\kappa)}
	=\gamma_T^\pm\mathsf{SL}_{\Gamma_\br}^{(\kappa)}
	\in\mathscr L(X_\br,X_\br').
	\label{eq:c11-efie-trace-operator}
\end{equation}

The Stratton--Chu representation \cite[Theorem~6]{BuffaHiptmair2003}
and the boundary condition in~\eqref{eq:c11-closed-scattering} give
\begin{equation}
	\boldsymbol E_\br^{\mathrm{sc}}
	=\mathsf{SL}_{\Gamma_\br}^{(\kappa)}\boldsymbol j_\br
	\quad\text{in }\text{D}_\br^{\text{c}},
	\label{eq:c11-closed-representation}
	\noeqref{eq:c11-closed-representation}
\end{equation}
where $\boldsymbol j_\br\coloneqq(\ii\kappa)^{-1}
\gamma_\times^+\curl\boldsymbol E_\br^{\mathrm{tot}}\in X_\br$.
Taking the exterior tangential trace of this identity and using
\eqref{eq:c11-efie-trace-operator} yields the EFIE on $\Gamma_\br$:
for each $\br\in\mathfrak T$, we seek
$\boldsymbol j_\br\in X_\br$ such that
\begin{equation}
	\mathsf A_{\Gamma_\br}^{(\kappa)}\boldsymbol j_\br
	=\boldsymbol f_\br,
	\quad \boldsymbol f_\br\coloneqq-\gamma_T\boldsymbol E^{\mathrm{inc}}
	\in X_\br'.
	\label{eq:c11-closed-trace-efie}
\end{equation}
Equivalently,
\begin{equation}
	\mathsf a_{\Gamma_\br}^{(\kappa)}(\boldsymbol j_\br,\boldsymbol v)
	=\dual{\boldsymbol f_\br}{\boldsymbol v}{X_\br'\times X_\br},
	\quad \forall\boldsymbol v\in X_\br.
	\label{eq:c11-closed-variational-efie}
\end{equation}
Conversely, the electric potential associated with any solution of
\eqref{eq:c11-closed-trace-efie} satisfies the exterior scattering
problem. By uniqueness of that problem, every solution of the EFIE
represents the same unique scattered field, although the solution of
the EFIE need not be unique.

\subsection{Shape Holomorphy of the EFIE}
To study shape dependence on a fixed space, we use the surface Piola
map to transfer the EFIE from $X_\br$ to $\widehat X$.
For each $\br\in\mathfrak T$, the inverse map
$\mathcal P_\br^{-1}:\widehat X\to X_\br$ transports the trial and
test fields to $\Gamma_\br$. We therefore define the bilinear form
$\widehat{\mathsf a}_\br^{(\kappa)}:
\widehat X\times\widehat X\to\mathbb C$ by
\begin{equation}
	\widehat{\mathsf a}_\br^{(\kappa)}
	      (\widehat{\boldsymbol j},\widehat{\boldsymbol v})
	\coloneqq\mathsf a_{\Gamma_\br}^{(\kappa)}
	      (\mathcal P_\br^{-1}\widehat{\boldsymbol j},
	       \mathcal P_\br^{-1}\widehat{\boldsymbol v}),
	\quad\forall\widehat{\boldsymbol j},\widehat{\boldsymbol v}\in\widehat X.
\end{equation}
The boundedness of $\mathsf a_{\Gamma_\br}^{(\kappa)}$ and
$\mathcal P_\br^{-1}$ implies that this form is bounded on
$\widehat X\times\widehat X$. Its associated operator is thus
\begin{equation}
	\widehat{\mathsf A}_\br^{(\kappa)}
	\coloneqq 
	(\mathcal P_\br^{-1})'\mathsf A_{\Gamma_\br}^{(\kappa)}\mathcal P_\br^{-1}
	\in\mathscr L(\widehat X,\widehat X').
\end{equation}

Let $\widehat{\mathsf F}_\br$ denote multiplication by the tangential
differential:
\begin{equation}
	\widehat{\mathsf F}_\br:\widehat X\to\boldsymbol H^{-\frac{1}{2}}(\widehat\Gamma):
	\widehat{\boldsymbol j}\mapsto F_\br\widehat{\boldsymbol j}.
\end{equation}
The definition \eqref{eq:c11-tangential-differential} and the
multiplier estimate in the proof of
Lemma~\ref{lem:c11-jacobian-multiplier} give
\begin{equation}
	\|\widehat{\mathsf F}_\br\|_{\mathscr L(\widehat X,
	                            \boldsymbol H^{-\frac{1}{2}}(\widehat\Gamma))}
	\leq C\|\br\|_{\mathscr C^{1,1}(\widehat\Gamma;\IR^3)}.
\end{equation}
The same formula defines the bounded complex-linear map
$\mathscr C^{1,1}(\widehat\Gamma;\mathbb C^3)\to
\mathscr L(\widehat X,\boldsymbol H^{-\frac12}(\widehat\Gamma)):
\br\mapsto\widehat{\mathsf F}_\br$.
Recall that $\mathsf V_{\Gamma_\br}^{(\kappa)}$ is the scalar Helmholtz
single layer BIO on $\Gamma_\br$ defined in~\eqref{eq:c11-scalar-bios}.
The auxiliary reference BIO $\widehat{\mathsf U}_\br^{(\kappa)}$ is
defined by the kernel formula~\eqref{eq:c11-symmetric-layer} and satisfies
$\widehat{\mathsf U}_\br^{(\kappa)}
=\widehat{\mathsf V}_\br^{(\kappa)}J_{\text S,\br}^{-1}$.
We use the same notation for its componentwise action on vector fields.
For each $\br\in\mathfrak T$, the pullback formulas
\eqref{eq:c11-distribution-pullbacks} and
$\tau_\br\mathsf V_{\Gamma_\br}^{(\kappa)}
=\widehat{\mathsf U}_\br^{(\kappa)}J_{\text S,\br}\tau_\br$ give
\begin{equation}
	\langle g,\mathsf V_{\Gamma_\br}^{(\kappa)}f\rangle_{\Gamma_\br}
	=\langle J_{\text S,\br}\tau_\br g,
	\widehat{\mathsf U}_\br^{(\kappa)}
	    (J_{\text S,\br}\tau_\br f)\rangle_{\widehat\Gamma},
	\label{eq:c11-singlelayer-pairing-transport}
\end{equation}
for all $f,g\in H^{-\frac12}(\Gamma_\br)$.
For $\widehat{\boldsymbol j},\widehat{\boldsymbol v}\in\widehat X$,
set $\boldsymbol j=\mathcal P_\br^{-1}\widehat{\boldsymbol j}$
and $\boldsymbol v=\mathcal P_\br^{-1}\widehat{\boldsymbol v}$.
Applying~\eqref{eq:c11-singlelayer-pairing-transport} to the two terms
in~\eqref{eq:c11-physical-efie-form}, componentwise in the vector term,
and using~\eqref{eq:c11-surface-piola}
and~\eqref{eq:c11-piola-divergence} gives the representation
\begin{equation}
	\begin{aligned}
	\widehat{\mathsf a}_\br^{(\kappa)}
	      (\widehat{\boldsymbol j},\widehat{\boldsymbol v})
	&=\ii\kappa\,
	\dual{\widehat{\mathsf F}_\br\widehat{\boldsymbol v}}
	{\widehat{\mathsf U}_\br^{(\kappa)}
	      \widehat{\mathsf F}_\br\widehat{\boldsymbol j}}
	{\widehat\Gamma}\\
	&\quad+\tfrac{1}{\ii\kappa}
	\dual{\divGh\widehat{\boldsymbol v}}
	{\widehat{\mathsf U}_\br^{(\kappa)}\divGh\widehat{\boldsymbol j}}
	{\widehat\Gamma}.
	\end{aligned}
	\label{eq:c11-efie-factorization}
\end{equation}

\begin{theorem}[Shape holomorphy of the EFIE]
\label{thm:c11-efie-holomorphy}
Let $\mathfrak T$ satisfy Assumption~\ref{ass:c11-deformations}.
Then there exists $\delta>0$ such that the map
\begin{equation}
	\widehat{\mathcal A}^{(\kappa)}:
	\mathfrak T\to\mathscr L(\widehat X,\widehat X'):
	\br\mapsto\widehat{\mathsf A}_\br^{(\kappa)}
\end{equation}
admits a bounded holomorphic extension to $\mathfrak T_\delta$.
\end{theorem}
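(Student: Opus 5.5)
The plan is to use the factorization \eqref{eq:c11-efie-factorization} as the definition of the extension on the complex neighborhood. For $\br\in\mathfrak T_\delta$ I would set
\[
\dual{\widehat{\mathsf A}_\br^{(\kappa)}\widehat{\boldsymbol j}}{\widehat{\boldsymbol v}}{\widehat X'\times\widehat X}
\coloneqq
\ii\kappa\,\dual{\widehat{\mathsf F}_\br\widehat{\boldsymbol v}}{\widehat{\mathsf U}_\br^{(\kappa)}\widehat{\mathsf F}_\br\widehat{\boldsymbol j}}{\widehat\Gamma}
+\tfrac{1}{\ii\kappa}\dual{\divGh\widehat{\boldsymbol v}}{\widehat{\mathsf U}_\br^{(\kappa)}\divGh\widehat{\boldsymbol j}}{\widehat\Gamma},
\]
where $\widehat{\mathsf U}_\br^{(\kappa)}$ is the bounded holomorphic extension from Proposition~\ref{prop:c11-singlelayer-holomorphy}, acting componentwise in the first term. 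For real $\br\in\mathfrak T$, this agrees with $\widehat{\mathsf A}_\br^{(\kappa)}=(\mathcal P_\br^{-1})'\mathsf A_{\Gamma_\br}^{(\kappa)}\mathcal P_\br^{-1}$. That agreement is exactly what the representation \eqref{eq:c11-efie-factorization} states, so I only need to check holomorphy and boundedness on $\mathfrak T_\delta$.

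The argument has three building blocks.

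First, $\br\mapsto\widehat{\mathsf F}_\br$ is the restriction of a bounded complex linear map $\mathscr C^{1,1}(\widehat\Gamma;\mathbb C^3)\to\mathscr L(\widehat X,\boldsymbol H^{-\frac12}(\widehat\Gamma))$. It is therefore entire, and it is bounded on the bounded set $\mathfrak T_\delta$, since $\mathfrak T$ is compact. Boundedness rests on the Lipschitz multiplier estimate of Lemma~\ref{lem:c11-jacobian-multiplier} applied entrywise to $F_\br=D_{\widehat\Gamma}\br\in\mathscr C^{0,1}$. This works for complex $\br$ too, because the multiplier bound depends only on the $\mathscr C^{0,1}$ norm of the entries.

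Second, $\divGh\in\mathscr L(\widehat X,H^{-\frac12}(\widehat\Gamma))$ is fixed and independent of $\br$.

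Third, $\br\mapsto\widehat{\mathsf U}_\br^{(\kappa)}\in\mathscr L(H^{-\frac12}(\widehat\Gamma),H^{\frac12}(\widehat\Gamma))$ is bounded and holomorphic on $\mathfrak T_\delta$ by Proposition~\ref{prop:c11-singlelayer-holomorphy}. Its componentwise vector version inherits this, because the componentwise lift is an isometric linear map.

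With these in hand, I would write $\widehat{\mathsf A}_\br^{(\kappa)}=\ii\kappa\,\widehat{\mathsf F}_\br'\,\widehat{\mathsf U}_\br^{(\kappa)}\,\widehat{\mathsf F}_\br+(\ii\kappa)^{-1}\divGh'\,\widehat{\mathsf U}_\br^{(\kappa)}\divGh$. The duals here are taken with respect to the bilinear pairing, so transposition is a bounded complex linear operation, as in the proof of Proposition~\ref{prop:c11-adjoint-double-layer-holomorphy}. Each term is then a composition of bounded holomorphic operator-valued maps. Composition is a continuous multilinear operation, so the product rule gives complex Fréchet differentiability, and the operator norms multiply to give a uniform bound on $\mathfrak T_\delta$. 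Equivalently, one can argue at the level of bounded bilinear forms on $\widehat X\times\widehat X$ and then use the isometric identification of such forms with $\mathscr L(\widehat X,\widehat X')$, exactly as in Proposition~\ref{prop:c11-hypersingular-holomorphy}.

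The main point needing care is the identification at real $\br$, namely the derivation of \eqref{eq:c11-efie-factorization}. One must check that the Piola inverse $\tau_\br\mathcal P_\br^{-1}\widehat{\boldsymbol j}=J_{\text S,\br}^{-1}F_\br\widehat{\boldsymbol j}$, combined with the factor $J_{\text S,\br}$ in \eqref{eq:c11-singlelayer-pairing-transport}, cancels the Jacobians. One must also check that \eqref{eq:c11-piola-divergence} converts $J_{\text S,\br}\tau_\br\operatorname{div}_{\Gamma_\br}$ into $\divGh$. Since the paper has already stated this representation, the step reduces to citing it.

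The remaining subtlety is choosing $\delta$. I would take the minimum of the radius from Proposition~\ref{prop:c11-singlelayer-holomorphy} and any radius needed for the $\widehat{\mathsf F}_\br$ bound, which in fact holds globally. The Jacobian and $F_\br^{-1}$ extensions of Lemma~\ref{lem:c11-geometric-holomorphy} are not needed at all, because the factorized form involves only $F_\br$ and $\widehat{\mathsf U}_\br^{(\kappa)}$.
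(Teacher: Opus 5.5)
Your proposal is correct and follows essentially the same route as the paper. The extension is defined through the factorization \eqref{eq:c11-efie-factorization}, and holomorphy and uniform boundedness on $\mathfrak T_\delta$ follow from the holomorphic extension of $\widehat{\mathsf U}_\br^{(\kappa)}$ in Proposition~\ref{prop:c11-singlelayer-holomorphy}, the entire multiplier map $\br\mapsto\widehat{\mathsf F}_\br$ (bounded on bounded sets), the fixed operator $\divGh$, the bounded bilinear pairings, and the identification of bilinear forms with operators in $\mathscr L(\widehat X,\widehat X')$; writing the operator as a composition with the bilinear duals $\widehat{\mathsf F}_\br'$ and $\divGh'$ only rephrases that bilinear-form argument.
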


\begin{proof}
By Proposition~\ref{prop:c11-singlelayer-holomorphy}, the map
$\mathfrak T\ni\br\mapsto\widehat{\mathsf U}_\br^{(\kappa)}
\in\mathscr L(H^{-\frac12}(\widehat\Gamma),H^{\frac12}(\widehat\Gamma))$
admits a bounded holomorphic extension to $\mathfrak T_\delta$.
The multiplier map defined above is entire and bounded on bounded sets.
The duality pairing between $H^{-\frac12}(\widehat\Gamma)$ and
$H^{\frac12}(\widehat\Gamma)$, and its componentwise vector extension,
are bounded complex bilinear maps. Moreover, the reference surface
divergence $\divGh:\widehat X\to H^{-\frac12}(\widehat\Gamma)$
is bounded and complex linear.
Identifying bilinear forms with their induced operators,
\eqref{eq:c11-efie-factorization} therefore shows that the map
$\mathfrak T\to\mathscr L(\widehat X,\widehat X'):
\br\mapsto\widehat{\mathsf A}_\br^{(\kappa)}$
admits a bounded holomorphic extension to $\mathfrak T_\delta$.
\end{proof}

\subsection{Shape Holomorphy of the EFIE Solution}
\label{sec:c11-closed-solution}

Unlike the exterior boundary value problem, unique solvability of
this EFIE requires exclusion of interior electric Maxwell eigenvalues
\cite[Sections~6 and~7.2]{BuffaHiptmair2003}.
Set
\begin{equation}
	\boldsymbol H_0(\curl,\text{D}_\br)\coloneqq
	\{\boldsymbol E\in\boldsymbol H(\curl,\text{D}_\br):
	\gamma_T\boldsymbol E=0\}.
\end{equation}
\begin{assumption}[Nonresonance on closed surfaces]
\label{ass:c11-nonresonance}
For each $\br\in\mathfrak T$,
$\kappa^2$ is not an interior electric Maxwell eigenvalue of $\text{D}_\br$.
That is, the only $\boldsymbol E\in\boldsymbol H_0(\curl,\text{D}_\br)$
satisfying
\begin{equation}
	\int_{\text{D}_\br}\curl\boldsymbol E(\mathbf{x})\cdot
	    \overline{\curl\boldsymbol V(\mathbf{x})}\,\dd\mathbf{x}
	=\kappa^2\int_{\text{D}_\br}\boldsymbol E(\mathbf{x})\cdot
	    \overline{\boldsymbol V(\mathbf{x})}\,\dd\mathbf{x},
	\quad \forall\boldsymbol V\in\boldsymbol H_0(\curl,\text{D}_\br),
\end{equation}
is $\boldsymbol E=0$.
\end{assumption}

Set
$\widehat{\boldsymbol f}_\br\coloneqq
(\mathcal P_\br^{-1})'\boldsymbol f_\br$ and
$\widehat{\boldsymbol j}_\br\coloneqq\mathcal P_\br\boldsymbol j_\br$.
\Needspace{8\baselineskip}
\begin{corollary}[Shape holomorphy of the EFIE solution on closed surfaces]
\label{cor:c11-closed-solution-holomorphy}
Let $\mathfrak T$ satisfy
Assumptions~\ref{ass:c11-deformations} and~\ref{ass:c11-nonresonance}.
Then the following assertions hold.
\begin{itemize}
 \item[\normalfont{(i)}] There exists $\delta>0$ such that the map
 \begin{equation}
	\mathfrak T\to\mathscr L(\widehat X',\widehat X):
	\br\mapsto
	(\widehat{\mathsf A}_\br^{(\kappa)})^{-1}
 \end{equation}
 admits a bounded holomorphic extension to $\mathfrak T_\delta$.
 \item[\normalfont{(ii)}] For each $\br\in\mathfrak T$, let
 $\boldsymbol f_\br\in X_\br'$ and assume
 that $\mathfrak T\ni\br\mapsto\widehat{\boldsymbol f}_\br\in\widehat X'$
 admits a bounded holomorphic extension to $\mathfrak T_{\delta_f}$
 for some $\delta_f>0$. For each $\br\in\mathfrak T$, we seek
 $\boldsymbol j_\br\in X_\br$ such that
 \begin{equation}
 	\mathsf A_{\Gamma_\br}^{(\kappa)}\boldsymbol j_\br
	=\boldsymbol f_\br.
 \end{equation}
 This problem is well-posed and there exists $\delta\in(0,\delta_f]$
 such that the map
 \begin{equation}
	\mathfrak T\to\widehat X:\br\mapsto
	\widehat{\boldsymbol j}_\br
	=(\widehat{\mathsf A}_\br^{(\kappa)})^{-1}
	     \widehat{\boldsymbol f}_\br
 \end{equation}
 admits a bounded holomorphic extension to $\mathfrak T_\delta$.
\end{itemize}
\end{corollary}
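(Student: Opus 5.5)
The plan is to combine Theorem~\ref{thm:c11-efie-holomorphy} with the holomorphy of operator inversion on the open set $\mathscr L_{\text{\normalfont iso}}(\widehat X,\widehat X')$, and then use compactness of $\mathfrak T$ to obtain a uniform neighborhood with a uniform bound on the inverses.

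\textbf{Step 1: invertibility on $\mathfrak T$.} Fix $\br\in\mathfrak T$. Under Assumption~\ref{ass:c11-nonresonance}, the EFIE operator $\mathsf A_{\Gamma_\br}^{(\kappa)}\in\mathscr L(X_\br,X_\br')$ is an isomorphism \cite[Sections~6 and~7.2]{BuffaHiptmair2003}. It is Fredholm of index zero by the generalized G\aa rding inequality, and injectivity follows from nonresonance: a kernel element produces, via the trace identity~\eqref{eq:c11-efie-trace-operator}, a field $\mathsf{SL}_{\Gamma_\br}^{(\kappa)}\boldsymbol j$ whose restriction to $\text{D}_\br$ lies in $\boldsymbol H_0(\curl,\text{D}_\br)$. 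This field is therefore zero, and the jump relations then force $\boldsymbol j=0$.

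Since $\mathcal P_\br^{-1}$ is an isomorphism $\widehat X\to X_\br$, its dual $(\mathcal P_\br^{-1})'$ is an isomorphism $X_\br'\to\widehat X'$. Hence
\[
\widehat{\mathsf A}_\br^{(\kappa)}=(\mathcal P_\br^{-1})'\mathsf A_{\Gamma_\br}^{(\kappa)}\mathcal P_\br^{-1}\in\mathscr L_{\text{\normalfont iso}}(\widehat X,\widehat X').
\]
This gives well-posedness in (ii). Moreover, $\widehat{\boldsymbol j}_\br=\mathcal P_\br\boldsymbol j_\br$ solves $\widehat{\mathsf A}_\br^{(\kappa)}\widehat{\boldsymbol j}_\br=\widehat{\boldsymbol f}_\br$.

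\textbf{Step 2: uniform bound on $\mathfrak T$.} Let $\delta_0$ be the radius from Theorem~\ref{thm:c11-efie-holomorphy}. The extension $\br\mapsto\widehat{\mathsf A}_\br^{(\kappa)}$ is holomorphic, hence continuous, on $\mathfrak T_{\delta_0}$. Inversion is continuous on the open set of isomorphisms, so $\br\mapsto\|(\widehat{\mathsf A}_\br^{(\kappa)})^{-1}\|$ is continuous on the compact set $\mathfrak T$. It is therefore bounded there by some $M<\infty$.

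\textbf{Step 3: uniform radius via a Neumann series.} The main obstacle is obtaining a single radius $\delta$ that works for all of $\mathfrak T$. I would obtain it from a Lipschitz estimate. The extension is bounded on $\mathfrak T_{\delta_0}$, say by $B$, so the Cauchy estimates give
\[
\|\widehat{\mathsf A}_{\br}^{(\kappa)}-\widehat{\mathsf A}_{\br_0}^{(\kappa)}\|\le \frac{2B}{\delta_0}\|\br-\br_0\|
\quad\text{for }\br\in B(\br_0,\delta_0/2).
\]
Choose $\delta\le\delta_0/2$ with $2B\delta/\delta_0\le 1/(2M)$. Every $\br\in\mathfrak T_\delta$ lies within $\delta$ of some $\br_0\in\mathfrak T$, so writing
\[
\widehat{\mathsf A}_\br^{(\kappa)}=\widehat{\mathsf A}_{\br_0}^{(\kappa)}\bigl(\mathsf I+(\widehat{\mathsf A}_{\br_0}^{(\kappa)})^{-1}(\widehat{\mathsf A}_\br^{(\kappa)}-\widehat{\mathsf A}_{\br_0}^{(\kappa)})\bigr)
\]
and applying a Neumann series shows that $\widehat{\mathsf A}_\br^{(\kappa)}$ is invertible with $\|(\widehat{\mathsf A}_\br^{(\kappa)})^{-1}\|\le 2M$.

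Inversion $T\mapsto T^{-1}$ is holomorphic on $\mathscr L_{\text{\normalfont iso}}(\widehat X,\widehat X')$, with derivative $H\mapsto -T^{-1}HT^{-1}$. Composing it with the holomorphic map of Theorem~\ref{thm:c11-efie-holomorphy} gives a holomorphic map on $\mathfrak T_\delta$ bounded by $2M$. This map agrees with $(\widehat{\mathsf A}_\br^{(\kappa)})^{-1}$ on $\mathfrak T$, which proves (i).

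\textbf{Step 4: the solution map.} For (ii), shrink $\delta$ to $\min\{\delta,\delta_f\}$. The evaluation map $\mathscr L(\widehat X',\widehat X)\times\widehat X'\to\widehat X$, $(S,\boldsymbol g)\mapsto S\boldsymbol g$, is bounded and bilinear. Therefore $\br\mapsto(\widehat{\mathsf A}_\br^{(\kappa)})^{-1}\widehat{\boldsymbol f}_\br$ is holomorphic on $\mathfrak T_\delta$, bounded by $2M$ times the bound on $\widehat{\boldsymbol f}$. On $\mathfrak T$ it equals $\widehat{\boldsymbol j}_\br$ by Step~1.
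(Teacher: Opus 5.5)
Your proof is correct and follows the same route as the paper. Both arguments get invertibility on $\mathfrak T$ from \cite{BuffaHiptmair2003} and transport it through the Piola isomorphisms, obtain a uniform bound on the inverses over compact $\mathfrak T$, and use a perturbation argument for uniformly bounded inverses on some $\mathfrak T_\delta$, followed by holomorphy of inversion and bounded bilinear evaluation. The only difference is that you carry out the perturbation step explicitly, with Cauchy estimates and a Neumann series that give the uniform radius without a covering argument, whereas the paper cites \cite[Proposition~4.20]{HS21}.

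One small completion in your supplementary injectivity sketch: you also need the exterior field to vanish before the jump relation forces $\boldsymbol j=0$. This follows from uniqueness of the exterior problem, since $\gamma_T^+$ and $\gamma_T^-$ of $\mathsf{SL}_{\Gamma_\br}^{(\kappa)}\boldsymbol j$ agree.
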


\begin{proof}
Assumption~\ref{ass:c11-nonresonance} gives a bounded isomorphism
$\mathsf A_{\Gamma_\br}^{(\kappa)}:X_\br\to X_\br'$
\cite[Sections~6 and~7.2]{BuffaHiptmair2003}.
Since the Piola transformations are isomorphisms,
$\widehat{\mathsf A}_\br^{(\kappa)}$ is also a bounded isomorphism for each
$\br\in\mathfrak T$. Continuity of inversion and compactness of
$\mathfrak T$ imply that the inverses are uniformly bounded on
$\mathfrak T$. Let $\delta>0$ be such that the holomorphic operator
extension in Theorem~\ref{thm:c11-efie-holomorphy} is defined on
$\mathfrak T_\delta$. After decreasing $\delta$ if necessary, the perturbation estimate in
\cite[Proposition~4.20, item~(i)]{HS21} and continuity of the operator extension
imply that
$\widehat{\mathsf A}_\br^{(\kappa)}$ is invertible for every
$\br\in\mathfrak T_\delta$, with uniformly bounded inverses on
$\mathfrak T_\delta$.
Hence the map
$\mathfrak T\ni\br\mapsto(\widehat{\mathsf A}_\br^{(\kappa)})^{-1}
\in\mathscr L(\widehat X',\widehat X)$
admits a bounded holomorphic extension to $\mathfrak T_\delta$
by holomorphy of inversion
\cite[Proposition~4.20, item~(ii)]{HS21}, with
\begin{equation}
	M_\delta\coloneqq\sup_{\br\in\mathfrak T_\delta}
	\|(\widehat{\mathsf A}_\br^{(\kappa)})^{-1}\|_
	      {\mathscr L(\widehat X',\widehat X)}<\infty.
\end{equation}
For item~(ii), we choose $\delta\leq\delta_f$.
The bounded bilinear evaluation
$\mathscr L(\widehat X',\widehat X)\times\widehat X'\to\widehat X$
shows that
$\mathfrak T\ni\br\mapsto(\widehat{\mathsf A}_\br^{(\kappa)})^{-1}
\widehat{\boldsymbol f}_\br\in\widehat X$ admits a bounded holomorphic
extension to $\mathfrak T_\delta$
\cite[Proposition~4.20, item~(iii)]{HS21}, proving item~(ii), with
\begin{equation}
	\sup_{\br\in\mathfrak T_\delta}
	      \|\widehat{\boldsymbol j}_\br\|_{\widehat X}
	\leq M_\delta\sup_{\br\in\mathfrak T_\delta}
	      \|\widehat{\boldsymbol f}_\br\|_{\widehat X'}<\infty.\qedhere
\end{equation}
\end{proof}

For the plane wave~\eqref{eq:c11-plane-wave}, the transported
right-hand side $\widehat{\boldsymbol f}_\br\in\widehat X'$ satisfies
\begin{equation}
	\dual{\widehat{\boldsymbol f}_\br}{\widehat{\boldsymbol v}}
	     {\widehat X'\times\widehat X}
	=-\dual{\widehat{\mathsf F}_\br\widehat{\boldsymbol v}}
	       {\boldsymbol p\exp(\ii\kappa\boldsymbol d\cdot\br)}
	       {\widehat\Gamma},
	\quad\forall\widehat{\boldsymbol v}\in\widehat X.
	\label{eq:c11-plane-wave-reference-data}
\end{equation}
Indeed, \eqref{eq:c11-plane-wave-reference-data} follows from
\eqref{eq:c11-surface-change-of-variables} and
\eqref{eq:c11-surface-piola}.
For complex-valued boundary transformations, we define the extension
by the same formula.
The map
$\mathscr C^{1,1}(\widehat\Gamma;\mathbb C^3)\ni\br\mapsto
\boldsymbol p\exp(\ii\kappa\boldsymbol d\cdot\br)
\in\mathscr C^{0,1}(\widehat\Gamma;\mathbb C^3)$
is entire and bounded on bounded sets.
The bounded linear dependence of
$\widehat{\mathsf F}_\br$ on $\br$ and the bounded complex bilinear duality
between $\boldsymbol H^{-\frac12}(\widehat\Gamma)$ and
$\boldsymbol H^{\frac12}(\widehat\Gamma)$
therefore imply that the map
$\mathfrak T\ni\br\mapsto\widehat{\boldsymbol f}_\br\in\widehat X'$
admits a bounded holomorphic extension to $\mathfrak T_\delta$.
By Corollary~\ref{cor:c11-closed-solution-holomorphy}, item~(ii),
the corresponding map
$\mathfrak T\to\widehat X:\br\mapsto\widehat{\boldsymbol j}_\br$
therefore admits a bounded holomorphic extension to a common
$\mathfrak T_\delta$.

\subsection{Shape Holomorphy of the Far-Field}
\label{sec:c11-closed-far-field}

For each $\br\in\mathfrak T$, let $\boldsymbol j_\br\in X_\br$
be the unique solution of the EFIE~\eqref{eq:c11-closed-trace-efie}
under the hypotheses of Corollary~\ref{cor:c11-closed-solution-holomorphy},
item~(ii).
The corresponding outgoing electric field is
$\boldsymbol E_\br^{\mathrm{sc}}\coloneqq
\mathsf{SL}_{\Gamma_\br}^{(\kappa)}\boldsymbol j_\br$
in $\text{D}_\br^{\text{c}}$.
The electric far-field pattern $\boldsymbol E_\br^\infty$
of $\boldsymbol E_\br^{\mathrm{sc}}$
is defined by the asymptotic expansion
\begin{equation}
	\boldsymbol E_\br^{\mathrm{sc}}(\rho\boldsymbol\theta)
	=\frac{\exp(\ii\kappa\rho)}{\rho}
	 \bigl(\boldsymbol E_\br^\infty(\boldsymbol\theta)
	       +\mathcal O(\rho^{-1})\bigr),
	\quad \rho\to\infty,
	\quad\forall\boldsymbol\theta\in\mathbb S^2.
	\label{eq:c11-far-field-asymptotic}
\end{equation}
The expansion is uniform in $\boldsymbol\theta\in\mathbb S^2$, and
$\boldsymbol E_\br^\infty\in\mathscr C^0(\mathbb S^2;\mathbb C^3)$
satisfies $\boldsymbol\theta\cdot\boldsymbol E_\br^\infty(\boldsymbol\theta)=0$
for all $\boldsymbol\theta\in\mathbb S^2$
\cite[Section~4.2]{ColtonKress2013}.

With the normalizations in~\eqref{eq:c11-electric-potential}
and~\eqref{eq:c11-far-field-asymptotic}, the far-field formula
\cite[Eq.~(2.10)]{CostabelLeLouer2012ShapeII} reads
\begin{equation}
	\boldsymbol E_\br^\infty(\boldsymbol\theta)
	=\frac{\ii\kappa}{4\pi}
	 (\mathsf I_3-\boldsymbol\theta\boldsymbol\theta^\top)
	 \int_{\Gamma_\br}
	 \exp(-\ii\kappa\boldsymbol\theta\cdot\mathbf y)
	 \boldsymbol j_\br(\mathbf y)\,\dd s_{\mathbf y},
	\quad\forall\boldsymbol\theta\in\mathbb S^2.
	\label{eq:c11-physical-far-field}
\end{equation}
The integral is understood as the componentwise
$H^{-\frac12}(\Gamma_\br)\times H^{\frac12}(\Gamma_\br)$ duality pairing.

For each $\br\in\mathfrak T$, we define the far-field operator on the
reference space
$\widehat{\mathsf F}_{\br,\infty}^{(\kappa)}:
\widehat X\to\mathscr C^0(\mathbb S^2;\mathbb C^3)$ by
\begin{equation}
	(\widehat{\mathsf F}_{\br,\infty}^{(\kappa)}
	       \widehat{\boldsymbol j})(\boldsymbol\theta)
	\coloneqq\frac{\ii\kappa}{4\pi}
	 (\mathsf I_3-\boldsymbol\theta\boldsymbol\theta^\top)
	 \int_{\widehat\Gamma}
	 \exp(-\ii\kappa\boldsymbol\theta\cdot\br(\widehat{\mathbf y}))
	 F_\br(\widehat{\mathbf y})\widehat{\boldsymbol j}(\widehat{\mathbf y})
	 \,\dd s_{\widehat{\mathbf y}}.
	\label{eq:c11-reference-far-field}
\end{equation}
This definition holds for all $\widehat{\boldsymbol j}\in\widehat X$
and all $\boldsymbol\theta\in\mathbb S^2$.
The integral is understood componentwise in the
$H^{-\frac12}(\widehat\Gamma)\times H^{\frac12}(\widehat\Gamma)$ pairing,
with $F_\br\widehat{\boldsymbol j}
=\widehat{\mathsf F}_\br\widehat{\boldsymbol j}$ in the first argument.
The Piola formula~\eqref{eq:c11-surface-piola} and the pullback
identity~\eqref{eq:c11-distribution-pullbacks} give
\begin{equation}
	\boldsymbol E_\br^\infty
	=\widehat{\mathsf F}_{\br,\infty}^{(\kappa)}
	   \widehat{\boldsymbol j}_\br,
	\quad\forall\br\in\mathfrak T.
	\label{eq:c11-far-field-composition}
\end{equation}
Equation~\eqref{eq:c11-reference-far-field} defines
$\widehat{\mathsf F}_{\br,\infty}^{(\kappa)}$
for all $\br\in\mathscr C^{1,1}(\widehat\Gamma;\mathbb C^3)$.

\Needspace{8\baselineskip}
\begin{corollary}[Shape holomorphy of the electric far-field]
\label{cor:c11-closed-far-field-holomorphy}
Assume the setting of
Corollary~\ref{cor:c11-closed-solution-holomorphy}, item~(ii), and set
$\boldsymbol E_\br^\infty\coloneqq
\widehat{\mathsf F}_{\br,\infty}^{(\kappa)}
\mathcal P_\br\boldsymbol j_\br$.
Then there exists $\delta\in(0,\delta_f]$ such that the map
\begin{equation}
	\mathfrak T\to\mathscr C^0(\mathbb S^2;\mathbb C^3):
	\br\mapsto\boldsymbol E_\br^\infty
\end{equation}
admits a bounded holomorphic extension to $\mathfrak T_\delta$.
In particular, the conclusion holds for the plane-wave data in
Subsection~\ref{sec:c11-closed-solution}.
\end{corollary}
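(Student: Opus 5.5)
The far-field is the composition $\boldsymbol E_\br^\infty=\widehat{\mathsf F}_{\br,\infty}^{(\kappa)}\widehat{\boldsymbol j}_\br$ from \eqref{eq:c11-far-field-composition}. Corollary~\ref{cor:c11-closed-solution-holomorphy}, item~(ii), already gives a bounded holomorphic extension of $\br\mapsto\widehat{\boldsymbol j}_\br\in\widehat X$ to some $\mathfrak T_\delta$ with $\delta\le\delta_f$. It therefore suffices to show that
\[
\mathscr C^{1,1}(\widehat\Gamma;\mathbb C^3)\ni\br\mapsto\widehat{\mathsf F}_{\br,\infty}^{(\kappa)}\in\mathscr L\bigl(\widehat X,\mathscr C^0(\mathbb S^2;\mathbb C^3)\bigr),
\]
defined by \eqref{eq:c11-reference-far-field} for complex $\br$, is holomorphic and bounded on bounded sets. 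Holomorphy of the composite then follows because the evaluation map $\mathscr L(\widehat X,\mathscr C^0(\mathbb S^2;\mathbb C^3))\times\widehat X\to\mathscr C^0(\mathbb S^2;\mathbb C^3)$ is a bounded bilinear map, exactly as in the proof of Corollary~\ref{cor:c11-closed-solution-holomorphy}. The sup bound is the product of the two sup bounds. The plane-wave case follows from the discussion after that corollary, which supplies holomorphic data $\widehat{\boldsymbol f}_\br$.

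For the operator map, I would split it as a pairing of $\widehat{\mathsf F}_\br\widehat{\boldsymbol j}\in\boldsymbol H^{-\frac12}(\widehat\Gamma)$ against the kernel $\boldsymbol\theta\mapsto g_\br(\boldsymbol\theta)\coloneqq\exp(-\ii\kappa\boldsymbol\theta\cdot\br)$, followed by the fixed bounded projection multiplier $\mathsf I_3-\boldsymbol\theta\boldsymbol\theta^\top$. The factor $\br\mapsto\widehat{\mathsf F}_\br\in\mathscr L(\widehat X,\boldsymbol H^{-\frac12}(\widehat\Gamma))$ is already known to be bounded complex linear, hence entire. For the kernel, I would regard
\[
\br\mapsto G_\br\in\mathscr C^0\bigl(\mathbb S^2;\mathscr C^{0,1}(\widehat\Gamma)\bigr),\qquad G_\br(\boldsymbol\theta)\coloneqq g_\br(\boldsymbol\theta),
\]
and show it is entire and bounded on bounded sets. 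It is the composition of the bounded linear map $\br\mapsto(\boldsymbol\theta\mapsto-\ii\kappa\boldsymbol\theta\cdot\br)$ into $\mathscr C^0(\mathbb S^2;\mathscr C^{0,1}(\widehat\Gamma))$ with the exponential. The latter is entire on this Banach algebra, since it is the power series $\sum_n x^n/n!$ in a Banach algebra under the pointwise product. Its bound is $\exp(C\kappa\|\br\|_{\mathscr C^{0,1}})$.

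Since $\mathscr C^{0,1}(\widehat\Gamma)\hookrightarrow H^{\frac12}(\widehat\Gamma)$ continuously, the map
\[
(T,G)\mapsto\bigl(\widehat{\boldsymbol j}\mapsto[\boldsymbol\theta\mapsto\langle T\widehat{\boldsymbol j},G(\boldsymbol\theta)\rangle_{\widehat\Gamma}]\bigr)
\]
is bounded bilinear from $\mathscr L(\widehat X,\boldsymbol H^{-\frac12})\times\mathscr C^0(\mathbb S^2;\mathscr C^{0,1})$ into $\mathscr L(\widehat X,\mathscr C^0(\mathbb S^2;\mathbb C^3))$. Continuity in $\boldsymbol\theta$ is inherited from continuity of $G$, and the sup over $\boldsymbol\theta$ gives the operator bound. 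Composition of holomorphic maps with bounded multilinear maps then yields holomorphy of $\br\mapsto\widehat{\mathsf F}_{\br,\infty}^{(\kappa)}$, with boundedness on the bounded set $\mathfrak T_\delta$; the latter is bounded because $\mathfrak T$ is compact.

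The main obstacle I expect is the vector-valued kernel step: obtaining uniform-in-$\boldsymbol\theta$ control, i.e.\ holomorphy into $\mathscr C^0(\mathbb S^2;\cdot)$ rather than pointwise in $\boldsymbol\theta$. I would handle it via the Banach-algebra exponential as above, checking that $\mathscr C^0(\mathbb S^2;\mathscr C^{0,1}(\widehat\Gamma))$ is a Banach algebra under pointwise products, up to an equivalent norm. Finally, I would note that for real $\br$ the extension agrees with $\boldsymbol E_\br^\infty$ by \eqref{eq:c11-far-field-composition}.
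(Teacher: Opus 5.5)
Your proposal is correct and follows essentially the same route as the paper. The paper also writes $\boldsymbol E_\br^\infty=\widehat{\mathsf F}_{\br,\infty}^{(\kappa)}\widehat{\boldsymbol j}_\br$ and uses that $\br\mapsto(\boldsymbol\theta\mapsto\exp(-\ii\kappa\boldsymbol\theta\cdot\br))$ is entire into $\mathscr C^0(\mathbb S^2;\mathscr C^{0,1}(\widehat\Gamma))$; it then combines the embedding $\mathscr C^{0,1}\hookrightarrow H^{\frac12}$, the linear dependence of $\widehat{\mathsf F}_\br$ on $\br$, and the bilinear duality and evaluation maps. The only difference is that you justify the exponential step explicitly through the Banach-algebra power series, where the paper cites Mujica.
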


\begin{proof}
The map
$\mathscr C^{0,1}(\widehat\Gamma;\mathbb C^3)\to
\mathscr C^0(\mathbb S^2;\mathscr C^{0,1}(\widehat\Gamma)):
\br\mapsto\bigl(\boldsymbol\theta\mapsto
\exp(-\ii\kappa\boldsymbol\theta\cdot\br(\cdot))\bigr)$
is entire
and bounded on bounded sets
\cite[Sections~5 and~13]{Mujica1986}.
The bounded embedding
$\mathscr C^{0,1}(\widehat\Gamma)\hookrightarrow
H^{\frac12}(\widehat\Gamma)$, the bounded linear dependence of
$\widehat{\mathsf F}_\br$ on $\br$, and the bounded bilinear duality
between $H^{-\frac12}(\widehat\Gamma)$ and $H^{\frac12}(\widehat\Gamma)$,
applied componentwise, show that
\eqref{eq:c11-reference-far-field} defines the entire map
$\mathscr C^{1,1}(\widehat\Gamma;\mathbb C^3)\to
\mathscr L(\widehat X,\mathscr C^0(\mathbb S^2;\mathbb C^3)):
\br\mapsto\widehat{\mathsf F}_{\br,\infty}^{(\kappa)}$,
bounded on bounded sets.
By~\eqref{eq:c11-far-field-composition},
Corollary~\ref{cor:c11-closed-solution-holomorphy}, and
\cite[Proposition~4.20, item~(iii)]{HS21}, the map
$\mathfrak T\ni\br\mapsto\boldsymbol E_\br^\infty
\in\mathscr C^0(\mathbb S^2;\mathbb C^3)$ admits a bounded holomorphic
extension to $\mathfrak T_\delta$, with $0<\delta\leq\delta_f$
chosen within the neighborhood of the solution extension.
\end{proof}

Following the treatment of squared far-field norms in
\cite[Section~5.3]{DolzHenriquez2024} and
\cite[Appendix~A]{DolzHenriquez2026FullyDiscrete},
we define, for each fixed $\boldsymbol\theta\in\mathbb S^2$,
$\Phi_{\boldsymbol\theta}:\mathfrak T\to\mathbb R$
and $\Phi:\mathfrak T\to\mathbb R$ by
\begin{equation}
	\Phi_{\boldsymbol\theta}(\br)
	\coloneqq\|\boldsymbol E_\br^\infty(\boldsymbol\theta)\|_{\mathbb C^3}^2
	\quad\text{and}\quad
	\Phi(\br)
	\coloneqq\|\boldsymbol E_\br^\infty\|_{\boldsymbol L^2(\mathbb S^2)}^2,
	\quad\forall\br\in\mathfrak T.
\end{equation}
Let $\boldsymbol E^\infty$ be the holomorphic far-field extension
provided by Corollary~\ref{cor:c11-closed-far-field-holomorphy}, and set
\begin{equation}
	\Phi_{\boldsymbol\theta}^{\mathbb C}(\br)
	\coloneqq\boldsymbol E^\infty(\br)(\boldsymbol\theta)\cdot
	 \overline{\boldsymbol E^\infty(\overline{\br})(\boldsymbol\theta)}
	\quad\text{and}\quad
	\Phi^{\mathbb C}(\br)
	\coloneqq\int_{\mathbb S^2}
	 \Phi_{\boldsymbol\theta}^{\mathbb C}(\br)\,\dd s_{\boldsymbol\theta}.
	\label{eq:c11-squared-far-field-extension}
\end{equation}

\begin{corollary}
\label{cor:c11-squared-far-field-holomorphy}
Let the assumptions of
Corollary~\ref{cor:c11-closed-far-field-holomorphy} hold.
The maps $\Phi_{\boldsymbol\theta}:\mathfrak T\to\mathbb R$ and
$\Phi:\mathfrak T\to\mathbb R$ admit bounded holomorphic extensions
$\Phi_{\boldsymbol\theta}^{\mathbb C}:\mathfrak T_\delta\to\mathbb C$ and
$\Phi^{\mathbb C}:\mathfrak T_\delta\to\mathbb C$, respectively,
with $\delta$ as in
Corollary~\ref{cor:c11-closed-far-field-holomorphy}.
In particular, for
$U_\delta\coloneqq\mathfrak T_\delta\cap
\mathscr C^{1,1}(\widehat\Gamma;\mathbb R^3)$, the maps
$\Phi_{\boldsymbol\theta}^{\mathbb C}|_{U_\delta},
\Phi^{\mathbb C}|_{U_\delta}:U_\delta\to\mathbb R$
are real analytic extensions of $\Phi_{\boldsymbol\theta}$ and $\Phi$,
respectively.
\end{corollary}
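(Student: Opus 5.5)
The plan is to obtain both extensions from the far-field extension $\boldsymbol E^\infty:\mathfrak T_\delta\to\mathscr C^0(\mathbb S^2;\mathbb C^3)$ of Corollary~\ref{cor:c11-closed-far-field-holomorphy}, using the conjugation trick already built into \eqref{eq:c11-squared-far-field-extension}. First I would check that $\mathfrak T_\delta$ is invariant under complex conjugation $\br\mapsto\overline{\br}$. This holds because $\mathfrak T$ consists of real-valued maps and conjugation is an isometry of $\mathscr C^{1,1}(\widehat\Gamma;\mathbb C^3)$, so $\|\overline{\br}-\widetilde\br\|=\|\br-\widetilde\br\|$ for every $\widetilde\br\in\mathfrak T$. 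Next I would show that $\br\mapsto\overline{\boldsymbol E^\infty(\overline{\br})}$ is holomorphic on $\mathfrak T_\delta$. The composition of the antilinear conjugations on the domain and target with a holomorphic map is again complex Fréchet differentiable: its derivative in direction $\boldsymbol h$ is $\overline{\mathrm D\boldsymbol E^\infty(\overline{\br})[\overline{\boldsymbol h}]}$, which is complex linear in $\boldsymbol h$, and the remainder estimate is preserved because conjugation is isometric. Boundedness is inherited directly, and the same sup bound holds.

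Second, I would show $\Phi_{\boldsymbol\theta}^{\mathbb C}$ is holomorphic. Point evaluation at $\boldsymbol\theta$ is a bounded linear map $\mathscr C^0(\mathbb S^2;\mathbb C^3)\to\mathbb C^3$, and the bilinear dot product (no conjugation) on $\mathbb C^3$ is bounded. So $\Phi_{\boldsymbol\theta}^{\mathbb C}$ is a bounded bilinear map applied to two bounded holomorphic maps, hence holomorphic, with $|\Phi^{\mathbb C}_{\boldsymbol\theta}|\le \sup_{\mathfrak T_\delta}\|\boldsymbol E^\infty\|_{\mathscr C^0}^2$ uniformly in $\boldsymbol\theta$. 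For $\Phi^{\mathbb C}$ I would use the bounded bilinear map $\mathscr C^0(\mathbb S^2;\mathbb C^3)^2\to\mathbb C$ given by $(\boldsymbol u,\boldsymbol w)\mapsto\int_{\mathbb S^2}\boldsymbol u\cdot\boldsymbol w\,\dd s$, bounded by $4\pi\|\boldsymbol u\|\|\boldsymbol w\|$. This avoids any need to differentiate under the integral pointwise.

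Third, I would verify that these are extensions. For $\br\in\mathfrak T$ we have $\overline{\br}=\br$, and $\boldsymbol E^\infty(\br)$ is the physical far-field, so $\Phi^{\mathbb C}_{\boldsymbol\theta}(\br)=\|\boldsymbol E^\infty_\br(\boldsymbol\theta)\|^2$, and $\Phi^{\mathbb C}(\br)=\Phi(\br)$ after integrating. For the real analyticity on $U_\delta$, note that for real $\br\in U_\delta$ again $\overline{\br}=\br$, so $\Phi^{\mathbb C}_{\boldsymbol\theta}(\br)=\|\boldsymbol E^\infty(\br)(\boldsymbol\theta)\|^2\in\mathbb R$, and similarly for $\Phi^{\mathbb C}$. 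The restriction of a holomorphic map on an open set of the complexification to its real trace is real analytic, by the locally convergent power series \cite[Sections~5 and~13]{Mujica1986}. Since it is real valued, it is a real analytic extension.

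The only delicate point I expect is the second half of the first step, namely confirming that the holomorphy argument for $\br\mapsto\overline{\boldsymbol E^\infty(\overline{\br})}$ is fully rigorous in the Banach setting. The explicit derivative formula above settles it.
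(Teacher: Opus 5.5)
Your proposal is correct and follows essentially the same route as the paper: conjugation invariance of $\mathfrak T_\delta$, holomorphy of $\br\mapsto\overline{\boldsymbol E^\infty(\overline{\br})}$, the bounded bilinear dot product followed by bounded integration over $\mathbb S^2$, and real-valuedness and agreement for real $\br$ since $\overline{\br}=\br$. The only difference is cosmetic: you verify holomorphy of the conjugated map through the explicit Fr\'echet derivative $\boldsymbol h\mapsto\overline{\mathrm D\boldsymbol E^\infty(\overline{\br})[\overline{\boldsymbol h}]}$, whereas the paper conjugates the local power series, and the two arguments are equivalent.
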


\begin{proof}
The set $\mathfrak T$ consists of real maps and its complex neighborhood
$\mathfrak T_\delta$ is invariant under conjugation.
Conjugation invariance makes the expressions
in~\eqref{eq:c11-squared-far-field-extension} well-defined.
Conjugating the local power series of $\boldsymbol E^\infty$ shows
that the map $\mathfrak T_\delta\to\mathscr C^0(\mathbb S^2;\mathbb C^3):
\br\mapsto\overline{\boldsymbol E^\infty(\overline{\br})}$ is holomorphic.
The pointwise dot product is bounded bilinear and integration over
$\mathbb S^2$ is bounded linear, so both extensions are holomorphic.
For real $\br\in\mathfrak T_\delta$, we have $\overline{\br}=\br$,
so both extensions are real-valued and equal the corresponding squared
norms. On $\mathfrak T$, they therefore agree with
$\Phi_{\boldsymbol\theta}$ and $\Phi$, respectively.
Writing
$M_{\infty,\delta}\coloneqq\sup_{\br\in\mathfrak T_\delta}
\|\boldsymbol E^\infty(\br)\|_{\mathscr C^0(\mathbb S^2;\mathbb C^3)}<\infty$,
we obtain
\begin{equation}
	\sup_{\br\in\mathfrak T_\delta}
	|\Phi_{\boldsymbol\theta}^{\mathbb C}(\br)|\leq M_{\infty,\delta}^2
	\quad\text{and}\quad
	\sup_{\br\in\mathfrak T_\delta}
	|\Phi^{\mathbb C}(\br)|\leq4\pi M_{\infty,\delta}^2.
	\qedhere
\end{equation}
\end{proof}

\begin{remark}
In \cite[Theorem~7.3]{EscapilJerez2026EFIE}, far-field holomorphy is
proved for countably parametrized affine deformations under
interior electric nonresonance, in
$\mathscr C^m(\mathbb S^2;\mathbb C^3)$ for every $m\geq0$.
Corollary~\ref{cor:c11-closed-far-field-holomorphy}
instead treats the deformation itself in
$\mathscr C^{1,1}(\widehat\Gamma;\mathbb C^3)$ and yields a bounded
holomorphic extension
$\mathfrak T_\delta\to\mathscr C^0(\mathbb S^2;\mathbb C^3)$.
The lack of holomorphy noted in
\cite[Remark~7.3]{EscapilJerez2026EFIE} concerns direct evaluation of
squared norms at complex parameters. Conjugation does not prevent the
construction of holomorphic functions that coincide with the squared
norms for real boundary transformations, as established by
Corollary~\ref{cor:c11-squared-far-field-holomorphy} for the functions
defined in~\eqref{eq:c11-squared-far-field-extension}.
\end{remark}

\section{Shape Holomorphy of the Maxwell EFIE on Screens}
\label{sec:c11-screens}

We consider perfectly conducting screens obtained by deforming a
fixed patch of the reference boundary. Extending densities by zero and
restricting the resulting functionals to screen test fields transfers
the EFIE operator and its shape holomorphy from
Section~\ref{sec:c11-efie} to screens. In addition to
Assumption~\ref{ass:c11-deformations}, we impose the following condition.

\begin{assumption}[Admissible screens]
\label{ass:c11-screen}
There exists a relatively open, simply connected $\mathscr C^{1,1}$ subdomain
$\widehat{\Gamma}^s \subsetneq \widehat{\Gamma}$ with nonempty relative boundary
$\partial_{\widehat{\Gamma}}\widehat{\Gamma}^s$ such that
$\widehat{\Gamma}\setminus\overline{\widehat{\Gamma}^s}$ is nonempty and simply
connected and $\Gamma^s_\br = \br(\widehat{\Gamma}^s)$, for each
$\br \in \mathfrak{T}$.
\end{assumption}

\subsection{Sobolev Spaces on Screens}
\label{sec:c11-screen-scalar}

For each $S\in\{\Gamma_\br^s,\widehat\Gamma^s\}$, we use the standard
screen Sobolev space $H^{\frac12}(S)$, equipped with the quotient
norm induced by restriction from its supporting boundary. We set
$\widetilde H^{-\frac12}(S)\coloneqq H^{\frac12}(S)'$
with $L^2(S)$ as pivot.

We also use the vector-valued spaces
$\boldsymbol H^{\frac12}(S)\coloneqq (H^{\frac12}(S))^3$ and
$\widetilde{\boldsymbol H}^{-\frac12}(S)
\coloneqq(\widetilde H^{-\frac12}(S))^3$.
Let $\mathsf R_\br^{(D)}:H^{\frac12}(\Gamma_\br)\to
H^{\frac12}(\Gamma_\br^s)$ denote the restriction operator to $\Gamma_\br^s$.
Its dual operator
$\mathsf E_\br^{(N)}\coloneqq(\mathsf R_\br^{(D)})':
\widetilde H^{-\frac12}(\Gamma_\br^s)\to
H^{-\frac12}(\Gamma_\br)$ is the extension by zero,
defined by
\begin{equation}
	\langle\mathsf E_\br^{(N)}f,v\rangle_{\Gamma_\br}
	=\langle f,\mathsf R_\br^{(D)}v\rangle_{\Gamma_\br^s},
\end{equation}
for all $f\in\widetilde H^{-\frac12}(\Gamma_\br^s)$ and
$v\in H^{\frac12}(\Gamma_\br)$.
On the reference boundary, we denote by
$\widehat{\mathsf R}^{(D)}:H^{\frac12}(\widehat\Gamma)\to
H^{\frac12}(\widehat\Gamma^s)$ and
$\widehat{\mathsf E}^{(N)}\coloneqq(\widehat{\mathsf R}^{(D)})':
\widetilde H^{-\frac12}(\widehat\Gamma^s)\to
H^{-\frac12}(\widehat\Gamma)$ the corresponding operators for restriction
and extension by zero on the reference boundary $\widehat\Gamma$
and the reference screen $\widehat\Gamma^s$.
The restriction $\mathsf R_\br^{(D)}$ is a bounded surjection and its
dual $\mathsf E_\br^{(N)}$ is an isometric injection, as are their
reference counterparts.
These restriction and zero-extension operators act componentwise on
vector fields, with the same notation.

We use the duality
$\langle\cdot,\cdot\rangle_{S}:
\widetilde H^{-\frac12}(S)\times H^{\frac12}(S)\to\mathbb C$.
For regular functions, it agrees with
$\langle f,\phi\rangle_{S}=\int_S f\phi\,\dd s$,
without complex conjugation.
For vector fields, the pairing is the sum of the scalar pairings over
the three components.

On $H^{\frac12}(\Gamma_\br^s)$ and its reference counterpart, we define
the pullback and its inverse by
$\tau_\br^s f\coloneqq f\circ\br|_{\widehat\Gamma^s}$ and
$(\tau_\br^s)^{-1}\widehat f
\coloneqq\widehat f\circ\br^{-1}|_{\Gamma_\br^s}$.
For all $f\in\widetilde H^{-\frac12}(\Gamma_\br^s)$ and
$\widehat f\in\widetilde H^{-\frac12}(\widehat\Gamma^s)$, we define
\begin{equation}
	\begin{aligned}
	\langle\tau_\br^s f,\widehat\phi\rangle_{\widehat\Gamma^s}
	&\coloneqq\langle f,(\tau_\br^s)^{-1}
		(J_{\text S,\br}^{-1}\widehat\phi)\rangle_{\Gamma_\br^s},\\
	\langle(\tau_\br^s)^{-1}\widehat f,\phi\rangle_{\Gamma_\br^s}
	&\coloneqq\langle\widehat f,J_{\text S,\br}\tau_\br^s\phi
		\rangle_{\widehat\Gamma^s},
	\end{aligned}
	\label{eq:c11-screen-distribution-pullbacks}
\end{equation}
for all $\widehat\phi\in H^{\frac12}(\widehat\Gamma^s)$ and
$\phi\in H^{\frac12}(\Gamma_\br^s)$, respectively.
The Jacobian factors are restricted to $\widehat\Gamma^s$.

\begin{lemma}
\label{lem:c11-screen-scalar-operators}
Let $\mathfrak T$ satisfy
Assumptions~\ref{ass:c11-deformations} and~\ref{ass:c11-screen}.
For each $\br\in\mathfrak T$, the following assertions hold.
\begin{itemize}
 \item[\normalfont{(i)}] For
 $S\in\{\Gamma_\br^s,\widehat\Gamma^s\}$ with supporting boundary
 $\Gamma$, multiplication by $m|_S$, $m\in\mathscr C^{0,1}(\Gamma)$,
 is bounded on $H^{\frac12}(S)$ and
 $\widetilde H^{-\frac12}(S)$, and commutes with restriction
 and extension by zero, respectively.
 \item[\normalfont{(ii)}] The pullbacks
 $\tau_\br^s:H^{\frac12}(\Gamma_\br^s)\to H^{\frac12}(\widehat\Gamma^s)$
 and
 $\tau_\br^s:\widetilde H^{-\frac12}(\Gamma_\br^s)\to
 \widetilde H^{-\frac12}(\widehat\Gamma^s)$
 are bounded isomorphisms with inverse $(\tau_\br^s)^{-1}$.
 They satisfy
 \begin{equation}
	\tau_\br^s\mathsf R_\br^{(D)}
	=\widehat{\mathsf R}^{(D)}\tau_\br
	\quad\text{and}\quad
	\tau_\br\mathsf E_\br^{(N)}
	=\widehat{\mathsf E}^{(N)}\tau_\br^s.
	\label{eq:c11-screen-scalar-transport}
 \end{equation}
\end{itemize}
\end{lemma}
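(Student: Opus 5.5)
The plan is to reduce everything to the closed-surface statements already available, namely Lemma~\ref{lem:c11-jacobian-multiplier} and Lemma~\ref{lem:c11-pullback-isomorphisms}. The tools are the quotient definition of $H^{\frac12}(S)$ and the duality definition $\widetilde H^{-\frac12}(S)=H^{\frac12}(S)'$.

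For item~(i), take $m\in\mathscr C^{0,1}(\Gamma)$ and $f\in H^{\frac12}(S)$. For any extension $F\in H^{\frac12}(\Gamma)$ of $f$, the product $mF$ is an extension of $(m|_S)f$. By the multiplier bound on $H^{\frac12}(\Gamma)$ from Lemma~\ref{lem:c11-jacobian-multiplier}, whose proof applies verbatim on $\Gamma_\br$ as already noted in Section~\ref{sec:c11-maxwell-traces}, we get $\|(m|_S)f\|_{H^{\frac12}(S)}\le\|mF\|_{H^{\frac12}(\Gamma)}\le C_m\|F\|_{H^{\frac12}(\Gamma)}$. Taking the infimum over $F$ gives boundedness on $H^{\frac12}(S)$, together with the identity $(m|_S)\mathsf R^{(D)}=\mathsf R^{(D)}m$. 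On $\widetilde H^{-\frac12}(S)$, multiplication is defined by duality, $\langle mf,\phi\rangle_S\coloneqq\langle f,m\phi\rangle_S$. This is the transpose of a bounded map, hence bounded, and it agrees with pointwise multiplication for regular $f$. The commutation $\mathsf E^{(N)}(mf)=m\,\mathsf E^{(N)}f$ is obtained by transposing $(m|_S)\mathsf R^{(D)}=\mathsf R^{(D)}m$, using that multiplication by $m$ on $H^{-\frac12}(\Gamma)$ is itself the transpose of multiplication on $H^{\frac12}(\Gamma)$, which holds by density of smooth functions.

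For item~(ii), the positive-order case follows the same pattern. Since $\br:\widehat\Gamma\to\Gamma_\br$ is a bijection mapping $\widehat\Gamma^s$ onto $\Gamma_\br^s$, we have $(\tau_\br F)|_{\widehat\Gamma^s}=\tau_\br^s(F|_{\Gamma_\br^s})$, which is the first identity in \eqref{eq:c11-screen-scalar-transport}. Hence $\|\tau_\br^s f\|_{H^{\frac12}(\widehat\Gamma^s)}\le\|\tau_\br\|\inf_F\|F\|_{H^{\frac12}(\Gamma_\br)}$ by Lemma~\ref{lem:c11-pullback-isomorphisms}. The symmetric argument with $\tau_\br^{-1}$ bounds $(\tau_\br^s)^{-1}$, and the two maps are mutually inverse by pointwise composition.

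For negative order, definition \eqref{eq:c11-screen-distribution-pullbacks} expresses $\tau_\br^s$ on $\widetilde H^{-\frac12}(\Gamma^s_\br)$ as the transpose of $\widehat\phi\mapsto(\tau_\br^s)^{-1}(J_{\text S,\br}^{-1}\widehat\phi)$. That map is bounded on $H^{\frac12}$ by item~(i) and the positive-order result, so $\tau_\br^s$ is bounded. The same reasoning with $\phi\mapsto J_{\text S,\br}\tau^s_\br\phi$ bounds $(\tau_\br^s)^{-1}$. Mutual inverseness follows from $J_{\text S,\br}J_{\text S,\br}^{-1}=1$ and the positive-order inverse relation.

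The second identity in \eqref{eq:c11-screen-scalar-transport} I would verify by pairing against $\widehat\phi\in H^{\frac12}(\widehat\Gamma)$:
\[
\langle\tau_\br\mathsf E_\br^{(N)}f,\widehat\phi\rangle_{\widehat\Gamma}
=\langle f,\mathsf R_\br^{(D)}\tau_\br^{-1}(J_{\text S,\br}^{-1}\widehat\phi)\rangle_{\Gamma_\br^s}
=\langle f,(\tau_\br^s)^{-1}\bigl(J_{\text S,\br}^{-1}\widehat{\mathsf R}^{(D)}\widehat\phi\bigr)\rangle_{\Gamma_\br^s}
=\langle\widehat{\mathsf E}^{(N)}\tau_\br^sf,\widehat\phi\rangle_{\widehat\Gamma}.
\]
The middle step uses the first identity applied with $\br^{-1}$ together with the commutation from item~(i).

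I expect the main obstacle to be bookkeeping rather than analysis. One must check that the duality-defined multiplication and pullbacks on $\widetilde H^{-\frac12}$ agree with their closed-surface counterparts under extension by zero. One must also use the correct pairing conventions throughout, bilinear and with $L^2$ as pivot, and keep track of where the Jacobian restrictions enter.
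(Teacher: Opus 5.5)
Your proposal is correct and follows essentially the same route as the paper's proof in Appendix~\ref{app:c11-screen-scalar}. Both arguments use the infimum over extensions in the quotient norm for the multiplier and positive-order pullback bounds, and transposition for the $\widetilde H^{-\frac12}$ statements, identifying $\tau_\br^s$ and $(\tau_\br^s)^{-1}$ there as the duals of $(\tau_\br^s)^{-1}J_{\text S,\br}^{-1}$ and $J_{\text S,\br}\tau_\br^s$; they then verify $\tau_\br\mathsf E_\br^{(N)}=\widehat{\mathsf E}^{(N)}\tau_\br^s$ by the same three-step pairing computation (your appeal to density for identifying multiplication on $H^{-\frac12}(\Gamma)$ as a transpose is unnecessary, since the paper defines it that way in Appendix~\ref{app:c11-jacobian}).
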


\begin{proof}
The proof is given in Appendix~\ref{app:c11-screen-scalar}.
\end{proof}

On the reference screen, these properties hold for multiplication by
$J_{\text S,\br}$, by $J_{\text S,\br}^{-1}$, and by each entry of
$F_\br$. The screen single layer BIO is
\begin{equation}
	\mathsf V_{\Gamma_\br^s}^{(\kappa)}
	\coloneqq \mathsf R_\br^{(D)}\mathsf V_{\Gamma_\br}^{(\kappa)}
	      \mathsf E_\br^{(N)}
	\in\mathscr L\bigl(\widetilde H^{-\frac{1}{2}}(\Gamma_\br^s),
	                   H^{\frac{1}{2}}(\Gamma_\br^s)\bigr).
	\label{eq:c11-screen-singlelayer}
\end{equation}

\subsection{Maxwell Trace Spaces and the Piola Transformation on Screens}
For each $\br\in\mathfrak T$, we define the Maxwell trace space
$X_\br^s\coloneqq\widetilde{\boldsymbol H}^{-\frac{1}{2}}
(\operatorname{div}_{\Gamma_\br^s},\Gamma_\br^s)$
as the space of fields
$\boldsymbol j\in\widetilde{\boldsymbol H}^{-\frac{1}{2}}(\Gamma_\br^s)$
with zero normal component,
$\bigl(\mathsf R_\br^{(D)}\boldsymbol n_\br\bigr)
 \cdot\boldsymbol j=0$
in $\widetilde H^{-\frac12}(\Gamma_\br^s)$,
for which there exists $f\in\widetilde H^{-\frac{1}{2}}(\Gamma_\br^s)$
such that
\begin{equation}
	\langle f,\mathsf R_\br^{(D)}\phi\rangle_{\Gamma_\br^s}
	=-\langle\boldsymbol j,
	         \mathsf R_\br^{(D)}
	         \nabla_{\Gamma_\br}\phi\rangle_{\Gamma_\br^s},
\end{equation}
for all $\phi\in\mathscr C^{1,1}(\Gamma_\br)$.
The density $f$ is uniquely determined by this identity.
We define the surface divergence of $\boldsymbol j$ by
$\operatorname{div}_{\Gamma_\br^s}\boldsymbol j\coloneqq f$.
We equip $X_\br^s$ with the norm
\begin{equation}
	\|\boldsymbol j\|_{X_\br^s}^2
	\coloneqq \|\boldsymbol j\|_{\widetilde{\boldsymbol H}^{-\frac{1}{2}}(\Gamma_\br^s)}^2
	+\|\operatorname{div}_{\Gamma_\br^s}\boldsymbol j\|_
	      {\widetilde H^{-\frac{1}{2}}(\Gamma_\br^s)}^2,
	\quad \forall\boldsymbol j\in X_\br^s.
\end{equation}
The reference space
$\widehat X^s\coloneqq\widetilde{\boldsymbol H}^{-\frac{1}{2}}
(\operatorname{div}_{\widehat\Gamma^s},\widehat\Gamma^s)$
and its norm are defined analogously.
Since the tangential projector is bounded on
$\boldsymbol H^{\pm\frac12}$ for $\mathscr C^{1,1}$ surfaces and restrictions
of smooth functions are dense in $H^{\frac12}(\Gamma_\br^s)$, this
definition agrees with
\cite[Section~2.3, Definition~1]{BuffaChristiansen2003}.

For every $\phi\in\mathscr C^{1,1}(\Gamma_\br)$,
\begin{equation*}
 \begin{aligned}
 \bigl\langle\operatorname{div}_{\Gamma_\br}
      (\mathsf E_\br^{(N)}\boldsymbol j),\phi\bigr\rangle_{\Gamma_\br}
 &=-\bigl\langle\boldsymbol j,
      \mathsf R_\br^{(D)}\nabla_{\Gamma_\br}\phi
      \bigr\rangle_{\Gamma_\br^s}\\
 &=\bigl\langle\mathsf E_\br^{(N)}
      \operatorname{div}_{\Gamma_\br^s}\boldsymbol j,
      \phi\bigr\rangle_{\Gamma_\br}.
 \end{aligned}
\end{equation*}
Therefore,
\begin{equation}
	\operatorname{div}_{\Gamma_\br}
	\bigl(\mathsf E_\br^{(N)}\boldsymbol j\bigr)
	=\mathsf E_\br^{(N)}
	      \operatorname{div}_{\Gamma_\br^s}\boldsymbol j,
	\label{eq:c11-screen-zero-extension-divergence}
\end{equation}
in $H^{-\frac{1}{2}}(\Gamma_\br)$.

Extension by zero preserves the zero normal component.
The scalar extension bound and
\eqref{eq:c11-screen-zero-extension-divergence} therefore show that
it is bounded in the Maxwell norm. The same holds on the reference
screen.
For each $\br\in\mathfrak T$, let
$\mathsf E_{\br,X}:X_\br^s\to X_\br$ denote extension by zero
and define its dual restriction operator
$\mathsf R_{\br,X}\coloneqq \mathsf E_{\br,X}':
X_\br'\to X_\br^{s\prime}$.
The corresponding reference operators are denoted by
$\widehat{\mathsf E}_X$ and $\widehat{\mathsf R}_X$.

Using the surface Piola formula~\eqref{eq:c11-surface-piola}, we define
for each $\br\in\mathfrak T$ the map
$\mathcal P_\br^s:X_\br^s\to
\widetilde{\boldsymbol H}^{-\frac12}(\widehat\Gamma^s)$
by
\begin{equation}
	\mathcal P_\br^s\boldsymbol j
	\coloneqq
	\widehat{\mathsf R}^{(D)}(J_{\text S,\br}F_\br^{-1})
	      \tau_\br^s\boldsymbol j,
	\quad \forall\boldsymbol j\in X_\br^s.
\end{equation}
Here $\widehat{\mathsf R}^{(D)}$ acts entrywise on the coefficient matrix.
The next lemma shows that the screen Piola map is an isomorphism and
commutes with extension by zero. This compatibility yields the transported
screen EFIE from the closed surface operator.

\begin{lemma}
\label{lem:c11-screen-extension}
Let $\mathfrak T$ satisfy
Assumptions~\ref{ass:c11-deformations} and~\ref{ass:c11-screen}.
For each $\br\in\mathfrak T$, the maps
 \begin{equation}
 	\mathcal P_\br^s\in\mathscr L_{\text{\normalfont iso}}(X_\br^s,\widehat X^s)
 	\quad\text{and}\quad
 	(\mathcal P_\br^s)^{-1}\in\mathscr L_{\text{\normalfont iso}}(\widehat X^s,X_\br^s)
 \end{equation}
 satisfy
 \begin{equation}
 	\widehat{\mathsf E}_X\mathcal P_\br^s
 	=\mathcal P_\br\mathsf E_{\br,X}.
 	\label{eq:c11-screen-piola-extension}
 \end{equation}
\end{lemma}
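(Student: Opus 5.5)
The plan is to first prove the commutation identity \eqref{eq:c11-screen-piola-extension} and then deduce the isomorphism property from the closed-surface Piola isomorphism. Here $\mathcal P_\br$ is the isomorphism $X_\br\to\widehat X$ of Subsection~\ref{sec:c11-maxwell-traces}.

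\emph{Step 1: the identity \eqref{eq:c11-screen-piola-extension} at the level of $\widetilde{\boldsymbol H}^{-\frac12}$.} Fix $\boldsymbol j\in X_\br^s$. By Lemma~\ref{lem:c11-screen-scalar-operators}(ii), componentwise, we have $\tau_\br\mathsf E_\br^{(N)}\boldsymbol j=\widehat{\mathsf E}^{(N)}\tau_\br^s\boldsymbol j$. Multiplication by the Lipschitz matrix $J_{\text S,\br}F_\br^{-1}$ commutes with extension by zero, in the sense that $m\,\widehat{\mathsf E}^{(N)}g=\widehat{\mathsf E}^{(N)}\bigl((\widehat{\mathsf R}^{(D)}m)g\bigr)$. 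This follows from Lemma~\ref{lem:c11-screen-scalar-operators}(i) applied entrywise and summed over the matrix-vector product. Combining these two facts gives
\[
\mathcal P_\br\mathsf E_{\br,X}\boldsymbol j
=J_{\text S,\br}F_\br^{-1}\widehat{\mathsf E}^{(N)}\tau_\br^s\boldsymbol j
=\widehat{\mathsf E}^{(N)}\mathcal P_\br^s\boldsymbol j .
\]

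\emph{Step 2: $\mathcal P_\br^s$ maps into $\widehat X^s$ boundedly.} By Step 1, $\widehat{\mathsf E}^{(N)}\mathcal P_\br^s\boldsymbol j=\mathcal P_\br\mathsf E_{\br,X}\boldsymbol j\in\widehat X$. Its normal component and surface divergence are therefore controlled by the closed-surface estimates. It remains to show that a field $\widehat{\boldsymbol g}\in\widetilde{\boldsymbol H}^{-\frac12}(\widehat\Gamma^s)$ lies in $\widehat X^s$ whenever $\widehat{\mathsf E}^{(N)}\widehat{\boldsymbol g}\in\widehat X$ and $\operatorname{div}_{\widehat\Gamma}\widehat{\mathsf E}^{(N)}\widehat{\boldsymbol g}$ is supported in $\overline{\widehat\Gamma^s}$. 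Concretely, $\operatorname{div}_{\widehat\Gamma}\widehat{\mathsf E}^{(N)}\widehat{\boldsymbol g}$ must equal $\widehat{\mathsf E}^{(N)}f$ for some $f\in\widetilde H^{-\frac12}(\widehat\Gamma^s)$.

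For our field this follows from \eqref{eq:c11-piola-divergence} and \eqref{eq:c11-screen-zero-extension-divergence}:
\[
\operatorname{div}_{\widehat\Gamma}\bigl(\mathcal P_\br\mathsf E_{\br,X}\boldsymbol j\bigr)
=J_{\text S,\br}\tau_\br\mathsf E_\br^{(N)}\operatorname{div}_{\Gamma_\br^s}\boldsymbol j
=\widehat{\mathsf E}^{(N)}\bigl(J_{\text S,\br}\tau_\br^s\operatorname{div}_{\Gamma_\br^s}\boldsymbol j\bigr),
\]
using Lemma~\ref{lem:c11-screen-scalar-operators} once more. Testing the definition of $\widehat X^s$ against $\widehat{\mathsf R}^{(D)}\nabla_{\widehat\Gamma}\phi$ and using the duality $\widehat{\mathsf E}^{(N)}=(\widehat{\mathsf R}^{(D)})'$ then identifies
\[
\operatorname{div}_{\widehat\Gamma^s}\mathcal P_\br^s\boldsymbol j
=J_{\text S,\br}\tau_\br^s\operatorname{div}_{\Gamma_\br^s}\boldsymbol j .
\]
The normal condition $(\widehat{\mathsf R}^{(D)}\widehat{\boldsymbol n})\cdot\mathcal P_\br^s\boldsymbol j=0$ follows in the same way, since $\widehat{\mathsf E}^{(N)}$ is injective. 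The norm bound then comes from the isometry of $\widehat{\mathsf E}^{(N)}$, the multiplier bounds, and the boundedness of $\tau_\br^s$.

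\emph{Step 3: invertibility.} I would define a candidate inverse by the screen analogue of the inverse formula in \eqref{eq:c11-surface-piola}:
\[
(\mathcal P_\br^s)^{-1}\widehat{\boldsymbol j}\coloneqq(\tau_\br^s)^{-1}\bigl(\widehat{\mathsf R}^{(D)}(J_{\text S,\br}^{-1}F_\br)\widehat{\boldsymbol j}\bigr).
\]
Its boundedness $\widehat X^s\to X_\br^s$ follows by repeating Steps 1 and 2 with the roles of the surfaces exchanged, that is, applying the divergence calculation to $\br^{-1}$ exactly as in the closed case. The two maps are mutually inverse because $F_\br F_\br^{-1}$ and $F_\br^{-1}F_\br$ act as the identity on tangential fields. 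Both maps preserve tangentiality, and the tangentiality on the screen is again checked after extension by zero, using the closed-surface fact.

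The main obstacle I expect is Step 2. The delicate point is the characterization of $\widehat X^s$: the screen divergence must be a genuine element of $\widetilde H^{-\frac12}(\widehat\Gamma^s)$, and not merely of $H^{-\frac12}(\widehat\Gamma)$ with support in $\overline{\widehat\Gamma^s}$. The approach above sidesteps this by computing the divergence explicitly as $\widehat{\mathsf E}^{(N)}$ of an element of $\widetilde H^{-\frac12}(\widehat\Gamma^s)$, which relies on the commutation of multipliers with extension by zero from Lemma~\ref{lem:c11-screen-scalar-operators}(i).
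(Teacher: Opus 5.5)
Your proposal is correct and follows essentially the same route as the paper. Both arguments obtain the componentwise identity $\widehat{\mathsf E}^{(N)}\mathcal P_\br^s\boldsymbol j=\mathcal P_\br\mathsf E_{\br,X}\boldsymbol j$ from Lemma~\ref{lem:c11-screen-scalar-operators}, use the closed-surface divergence formula \eqref{eq:c11-piola-divergence} together with \eqref{eq:c11-screen-zero-extension-divergence} and injectivity of $\widehat{\mathsf E}^{(N)}$ to identify the normal component and screen divergence, and handle the explicit inverse via $\br^{-1}$ and the tangential-projector identities; the only difference is that you state the commutation identity first while the paper records it last, which is immaterial.
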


\begin{proof}
For each $\boldsymbol j\in X_\br^s$,
\eqref{eq:c11-screen-scalar-transport} and
Lemma~\ref{lem:c11-screen-scalar-operators}, item~(i), give
\begin{equation}
	\widehat{\mathsf E}^{(N)}
	    \mathcal P_\br^s\boldsymbol j
	=\mathcal P_\br\mathsf E_{\br,X}\boldsymbol j.
	\label{eq:c11-screen-piola-components}
\end{equation}
Since $\mathcal P_\br:X_\br\to\widehat X$, the field
$\mathcal P_\br\mathsf E_{\br,X}\boldsymbol j$ belongs to
$\widehat X\subset\boldsymbol H^{-\frac12}(\widehat\Gamma)$.
In particular,
$\widehat{\boldsymbol n}\cdot\mathcal P_\br\mathsf E_{\br,X}\boldsymbol j=0$
in $H^{-\frac12}(\widehat\Gamma)$.
By~\eqref{eq:c11-piola-divergence} and
\eqref{eq:c11-screen-zero-extension-divergence}, its surface divergence
on $\widehat\Gamma$ is the zero extension of
$\widehat{\mathsf R}^{(D)}(J_{\text S,\br})\tau_\br^s
\operatorname{div}_{\Gamma_\br^s}\boldsymbol j$.
Since $\widehat{\mathsf E}^{(N)}$ is injective and commutes with multiplication,
$\widehat{\mathsf R}^{(D)}(\widehat{\boldsymbol n})
 \cdot\mathcal P_\br^s\boldsymbol j=0$
in $\widetilde H^{-\frac12}(\widehat\Gamma^s)$.
For all $\widehat\phi\in\mathscr C^{1,1}(\widehat\Gamma)$, the formula
for $\operatorname{div}_{\widehat\Gamma}
(\mathcal P_\br\mathsf E_{\br,X}\boldsymbol j)$ and the duality defining
$\widehat{\mathsf E}^{(N)}$ give the variational identity
\begin{equation*}
 \bigl\langle\mathcal P_\br^s\boldsymbol j,
       \widehat{\mathsf R}^{(D)}
          \nabla_{\widehat\Gamma}\widehat\phi\bigr\rangle_{\widehat\Gamma^s}
 ={}
 -\bigl\langle
       \widehat{\mathsf R}^{(D)}(J_{\text S,\br})\tau_\br^s
          \operatorname{div}_{\Gamma_\br^s}\boldsymbol j,
       \widehat{\mathsf R}^{(D)}\widehat\phi
       \bigr\rangle_{\widehat\Gamma^s}.
\end{equation*}
The variational identity shows that
$\mathcal P_\br^s\boldsymbol j\in\widehat X^s$ and
\begin{equation}
	\operatorname{div}_{\widehat\Gamma^s}
	    (\mathcal P_\br^s\boldsymbol j)
	=\widehat{\mathsf R}^{(D)}(J_{\text S,\br})\tau_\br^s
	    \operatorname{div}_{\Gamma_\br^s}\boldsymbol j.
	\label{eq:c11-screen-piola-divergence}
\end{equation}
The field formula for $\mathcal P_\br^s$,
\eqref{eq:c11-screen-piola-divergence}, and the multiplier and
pullback estimates in Lemma~\ref{lem:c11-screen-scalar-operators},
items~(i) and~(ii),
give boundedness from $X_\br^s$ to $\widehat X^s$.

Applying the same divergence calculation to the inverse surface
diffeomorphism $\br^{-1}:\Gamma_\br^s\to\widehat\Gamma^s$, together with
the screen multiplier and pullback estimates, gives the bounded map
\begin{equation}
	(\mathcal P_\br^s)^{-1}\widehat{\boldsymbol j}
	=(\tau_\br^s)^{-1}
	  \bigl(\widehat{\mathsf R}^{(D)}(J_{\text S,\br}^{-1}F_\br)
	         \widehat{\boldsymbol j}\bigr)\in X_\br^s,
	\quad\forall\widehat{\boldsymbol j}\in\widehat X^s.
\end{equation}
The tangential-projector identities
$F_\br^{-1}F_\br=\widehat P$ and
$F_\br F_\br^{-1}=\mathsf I_3-\boldsymbol N_\br\boldsymbol N_\br^\top$,
together with tangency and injectivity of $\tau_\br^s$, show that these
maps are mutually inverse.
Since $\widehat{\mathsf E}_X$ acts component-wise as
$\widehat{\mathsf E}^{(N)}$,
\eqref{eq:c11-screen-piola-components} is
\eqref{eq:c11-screen-piola-extension}, completing the proof.
\end{proof}

\subsection{Shape Holomorphy of the EFIE on Screens}
For each $\br\in\mathfrak T$, we define the bilinear form
$\mathsf a_{\Gamma_\br^s}^{(\kappa)}:
X_\br^s\times X_\br^s\to\mathbb C$
for the EFIE on a screen as 
\begin{equation}
	\mathsf a_{\Gamma_\br^s}^{(\kappa)}(\boldsymbol j,\boldsymbol v)
	\coloneqq\ii\kappa\,
	\dual{\boldsymbol v}
	     {\mathsf V_{\Gamma_\br^s}^{(\kappa)}\boldsymbol j}
	     {\Gamma_\br^s}
	+\frac{1}{\ii\kappa}\,
	\dual{\operatorname{div}_{\Gamma_\br^s}\boldsymbol v}
	     {\mathsf V_{\Gamma_\br^s}^{(\kappa)}
	      \operatorname{div}_{\Gamma_\br^s}\boldsymbol j}
	     {\Gamma_\br^s},
\end{equation}
for all $\boldsymbol j,\boldsymbol v\in X_\br^s$.
The pairings are those fixed in Subsection~\ref{sec:c11-screen-scalar},
and the single layer BIO is defined in~\eqref{eq:c11-screen-singlelayer}.
Furthermore, for each $\br \in \mathfrak{T}$, this bilinear form induces the operator
$\mathsf A_{\Gamma_\br^s}^{(\kappa)}:X_\br^s\to X_\br^{s\prime}$
through
\begin{equation}
	\dual{\mathsf A_{\Gamma_\br^s}^{(\kappa)}\boldsymbol j}
	     {\boldsymbol v}{X_\br^{s\prime}\times X_\br^s}
	\coloneqq\mathsf a_{\Gamma_\br^s}^{(\kappa)}(\boldsymbol j,\boldsymbol v),
	\quad\forall\boldsymbol j,\boldsymbol v\in X_\br^s.
\end{equation}
The definition~\eqref{eq:c11-screen-singlelayer} and the divergence
identity~\eqref{eq:c11-screen-zero-extension-divergence} give
\begin{equation}
	\mathsf A_{\Gamma_\br^s}^{(\kappa)}
	=\mathsf R_{\br,X}\mathsf A_{\Gamma_\br}^{(\kappa)}
	      \mathsf E_{\br,X}.
	\label{eq:c11-screen-efie-compression}
\end{equation}
Indeed, for all $f,g\in\widetilde H^{-\frac{1}{2}}(\Gamma_\br^s)$
and each $\br \in \mathfrak{T}$,
$\mathsf E_\br^{(N)}=(\mathsf R_\br^{(D)})'$ and the
single layer formula~\eqref{eq:c11-screen-singlelayer} yield
\begin{equation}
	\langle g,\mathsf V_{\Gamma_\br^s}^{(\kappa)}f\rangle_{\Gamma_\br^s}
	=\langle\mathsf E_\br^{(N)}g,
	      \mathsf V_{\Gamma_\br}^{(\kappa)}\mathsf E_\br^{(N)}f
	      \rangle_{\Gamma_\br}.
\end{equation}
Applying this identity to each vector component and to the scalar
divergence term, using~\eqref{eq:c11-screen-zero-extension-divergence},
proves~\eqref{eq:c11-screen-efie-compression}.
The resulting screen EFIE agrees with the formulation in
\cite[Section~3.3, Eq.~(73)]{BuffaChristiansen2003}
up to the factor $\ii\kappa$.

For each $\br\in\mathfrak T$, the transported screen EFIE operator is
\begin{equation}
	\widehat{\mathsf A}_{\br,s}^{(\kappa)}
	\coloneqq ((\mathcal P_\br^s)^{-1})'
	    \mathsf A_{\Gamma_\br^s}^{(\kappa)}(\mathcal P_\br^s)^{-1}
	\in\mathscr L(\widehat X^s,\widehat X^{s\prime}).
\end{equation}

\begin{theorem}[Shape holomorphy of the screen EFIE]
\label{thm:c11-screen-holomorphy}
Let $\mathfrak T$ satisfy
Assumptions~\ref{ass:c11-deformations} and~\ref{ass:c11-screen}.
Then there exists $\delta>0$ such that the map
\begin{equation}
	\widehat{\mathcal A}_s^{(\kappa)}:
	\mathfrak T\to\mathscr L(\widehat X^s,\widehat X^{s\prime}):
	\br\mapsto\widehat{\mathsf A}_{\br,s}^{(\kappa)}
\end{equation}
admits a bounded holomorphic extension to $\mathfrak T_\delta$.
\end{theorem}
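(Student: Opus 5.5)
The plan is to reduce the screen operator to the closed-surface operator of Theorem~\ref{thm:c11-efie-holomorphy} by fixed (i.e.\ $\br$-independent) reference restriction and extension operators. For each $\br\in\mathfrak T$, I would combine the compression identity~\eqref{eq:c11-screen-efie-compression} with Lemma~\ref{lem:c11-screen-extension}. From~\eqref{eq:c11-screen-piola-extension} we get $\mathsf E_{\br,X}(\mathcal P_\br^s)^{-1}=\mathcal P_\br^{-1}\widehat{\mathsf E}_X$, since $\mathcal P_\br$ is an isomorphism. Taking duals then gives $((\mathcal P_\br^s)^{-1})'\mathsf R_{\br,X}=\widehat{\mathsf R}_X(\mathcal P_\br^{-1})'$, using $\mathsf R_{\br,X}=\mathsf E_{\br,X}'$ and $\widehat{\mathsf R}_X=\widehat{\mathsf E}_X'$. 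Substituting both into the definition of $\widehat{\mathsf A}_{\br,s}^{(\kappa)}$ yields the key factorization
\begin{equation*}
\widehat{\mathsf A}_{\br,s}^{(\kappa)}
=\widehat{\mathsf R}_X\,(\mathcal P_\br^{-1})'\mathsf A_{\Gamma_\br}^{(\kappa)}\mathcal P_\br^{-1}\,\widehat{\mathsf E}_X
=\widehat{\mathsf R}_X\,\widehat{\mathsf A}_\br^{(\kappa)}\,\widehat{\mathsf E}_X,
\quad\forall\br\in\mathfrak T.
\end{equation*}

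Next, I would verify that $\widehat{\mathsf E}_X\in\mathscr L(\widehat X^s,\widehat X)$ and $\widehat{\mathsf R}_X\in\mathscr L(\widehat X',\widehat X^{s\prime})$ are bounded and independent of $\br$. Boundedness of $\widehat{\mathsf E}_X$ follows from the scalar zero-extension bound together with~\eqref{eq:c11-screen-zero-extension-divergence} on the reference screen, which was already noted before Lemma~\ref{lem:c11-screen-extension}. The restriction $\widehat{\mathsf R}_X$ is then bounded as the dual of $\widehat{\mathsf E}_X$.

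Consequently, the map
\begin{equation*}
\mathscr L(\widehat X,\widehat X')\ni T\mapsto\widehat{\mathsf R}_X T\widehat{\mathsf E}_X\in\mathscr L(\widehat X^s,\widehat X^{s\prime})
\end{equation*}
is bounded and complex linear, with norm at most $\|\widehat{\mathsf E}_X\|^2$. Let $\delta$ be as in Theorem~\ref{thm:c11-efie-holomorphy}. I would then define the extension on $\mathfrak T_\delta$ by $\br\mapsto\widehat{\mathsf R}_X\widehat{\mathsf A}_\br^{(\kappa)}\widehat{\mathsf E}_X$, using the holomorphic extension of $\br\mapsto\widehat{\mathsf A}_\br^{(\kappa)}$. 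This map is holomorphic, being a bounded linear map composed with a holomorphic one, and it is bounded with sup bound at most $\|\widehat{\mathsf E}_X\|^2\sup_{\mathfrak T_\delta}\|\widehat{\mathsf A}_\br^{(\kappa)}\|$. By the factorization above, it coincides with $\widehat{\mathsf A}_{\br,s}^{(\kappa)}$ on $\mathfrak T$.

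I expect the main obstacle to be the dualization step, which needs care with the pairing conventions. Transposing the identity~\eqref{eq:c11-screen-piola-extension} requires it to hold as an identity of bounded operators on the full spaces, which is what Lemma~\ref{lem:c11-screen-extension} supplies. It also requires all duals to be taken with the bilinear pairings, so that $(\mathcal P_\br^{-1})'$ and $((\mathcal P_\br^s)^{-1})'$ match the definitions of $\widehat{\mathsf A}_\br^{(\kappa)}$ and $\widehat{\mathsf A}_{\br,s}^{(\kappa)}$. To settle this, I would check the identity weakly: for $\widehat{\boldsymbol j},\widehat{\boldsymbol v}\in\widehat X^s$, I would verify $\mathsf a_{\Gamma_\br^s}^{(\kappa)}((\mathcal P_\br^s)^{-1}\widehat{\boldsymbol j},(\mathcal P_\br^s)^{-1}\widehat{\boldsymbol v})=\widehat{\mathsf a}_\br^{(\kappa)}(\widehat{\mathsf E}_X\widehat{\boldsymbol j},\widehat{\mathsf E}_X\widehat{\boldsymbol v})$ directly from~\eqref{eq:c11-screen-efie-compression} and~\eqref{eq:c11-screen-piola-extension}.
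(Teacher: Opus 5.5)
Your proposal is correct and follows the paper's proof. You derive the factorization $\widehat{\mathsf A}_{\br,s}^{(\kappa)}=\widehat{\mathsf R}_X\widehat{\mathsf A}_\br^{(\kappa)}\widehat{\mathsf E}_X$ from the compression identity and the Piola--extension commutation, then compose the holomorphic extension of Theorem~\ref{thm:c11-efie-holomorphy} with the fixed bounded linear map $T\mapsto\widehat{\mathsf R}_X T\widehat{\mathsf E}_X$; beyond the paper, you also spell out the inversion and dualization steps behind the factorization, which the paper leaves implicit.
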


\begin{proof}
Recalling~\eqref{eq:c11-screen-piola-extension}
and~\eqref{eq:c11-screen-efie-compression}, we obtain
\begin{equation}\label{eq:c11-screen-fixed-compression}
	\widehat{\mathsf A}_{\br,s}^{(\kappa)}
	=
	\widehat{\mathsf R}_X
	\widehat{\mathsf A}_\br^{(\kappa)}
	\widehat{\mathsf E}_X.
\end{equation}
The reference extension and restriction operators are bounded and
independent of $\br$.
The map
$\mathscr L(\widehat X,\widehat X')\ni\mathsf{T}\mapsto
\widehat{\mathsf R}_X\mathsf{T}\widehat{\mathsf E}_X
\in\mathscr L(\widehat X^s,\widehat X^{s\prime})$
is bounded and complex linear.
Thus the map
$\mathfrak T\ni\br\mapsto\widehat{\mathsf A}_{\br,s}^{(\kappa)}
\in\mathscr L(\widehat X^s,\widehat X^{s\prime})$
admits a bounded holomorphic extension to $\mathfrak T_\delta$
by Theorem~\ref{thm:c11-efie-holomorphy}.
\end{proof}

\subsection{Shape Holomorphy of the EFIE Solution on a Screen}
\label{sec:c11-screen-solution}

Let $\boldsymbol E^{\mathrm{inc}}$ be an incident electric field as in
Subsection~\ref{sec:c11-closed-formulation}.
For each $\br\in\mathfrak T$, consider scattering by the screen
$\Gamma_\br^s$.
Set $\text{D}_\br^{\text{c},s}\coloneqq
\mathbb R^3\setminus\overline{\Gamma_\br^s}$.
The traces $\gamma_T^\pm$ and $\gamma_\times^\pm$ on $\Gamma_\br$
are taken from the exterior and interior of $\text{D}_\br$, respectively,
using the same normal $\boldsymbol n_\br$ on both sides.
For fields with these one-sided tangential traces, we define the
electric traces on the screen as
\begin{equation}
	\gamma_{T,s}^\pm\boldsymbol u
	\coloneqq\mathsf R_{\br,X}(\gamma_T^\pm\boldsymbol u)
	\in X_\br^{s\prime}.
\end{equation}
For the smooth incident field, the two traces agree and are denoted by
$\gamma_{T,s}\boldsymbol E^{\mathrm{inc}}$.
For each $\br\in\mathfrak T$, we seek
$\boldsymbol E_{\br,s}^{\mathrm{sc}}\in
\boldsymbol H_{\mathrm{loc}}(\curl,\mathbb R^3)$ such that
\begin{equation}
	\begin{aligned}
	\curl\curl\boldsymbol E_{\br,s}^{\mathrm{sc}}
	-\kappa^2\boldsymbol E_{\br,s}^{\mathrm{sc}}
	&=0 &&\text{in }\text{D}_\br^{\text{c},s},\\
	\gamma_{T,s}^+\boldsymbol E_{\br,s}^{\mathrm{sc}}
	=\gamma_{T,s}^-\boldsymbol E_{\br,s}^{\mathrm{sc}}
	&=-\gamma_{T,s}\boldsymbol E^{\mathrm{inc}}
	&&\text{in }X_\br^{s\prime},
	\end{aligned}
	\label{eq:c11-screen-scattering}
\end{equation}
together with the Silver--M\"uller radiation condition
\eqref{eq:c11-radiation}, with $\boldsymbol E_{\br,s}^{\mathrm{sc}}$
in place of $\boldsymbol E_\br^{\mathrm{sc}}$
\cite[Section~3.1]{BuffaChristiansen2003}.

For all $\mu\in\widetilde H^{-\frac12}(\Gamma_\br^s)$, the scalar
Helmholtz single layer potential is defined by
\begin{equation}
	(\mathsf S_{\Gamma_\br^s}^{(\kappa)}\mu)(\mathbf{x})
	\coloneqq\int_{\Gamma_\br^s}G^{(\kappa)}(\mathbf{x},\mathbf{y})
	                 \mu(\mathbf{y})\,\dd s_{\mathbf{y}},
	\quad 
	\mathbf{x}\in\text{D}_\br^{\text{c},s},
\end{equation}
where the integral is understood as the
$\widetilde H^{-\frac12}(\Gamma_\br^s)\times H^{\frac12}(\Gamma_\br^s)$
duality pairing. For all $\boldsymbol j\in X_\br^s$, the electric
potential generated by $\boldsymbol j$ is
\begin{equation}
	\mathsf{SL}_{\Gamma_\br^s}^{(\kappa)}\boldsymbol j
	\coloneqq\ii\kappa\mathsf S_{\Gamma_\br^s}^{(\kappa)}
	                 \boldsymbol j
	-\frac{1}{\ii\kappa}\nabla\mathsf S_{\Gamma_\br^s}^{(\kappa)}
	                 \operatorname{div}_{\Gamma_\br^s}\boldsymbol j.
	\label{eq:c11-screen-electric-potential}
	\noeqref{eq:c11-screen-electric-potential}
\end{equation}
Here $\mathsf S_{\Gamma_\br^s}^{(\kappa)}$ acts componentwise on the
density and as a scalar operator on its surface divergence.

By \cite[Section~3.2]{BuffaChristiansen2003}, the map
$\mathsf{SL}_{\Gamma_\br^s}^{(\kappa)}:X_\br^s\to
\boldsymbol H_{\mathrm{loc}}(\curl,\mathbb R^3)$ is continuous and linear.
For every $\boldsymbol j\in X_\br^s$, the resulting field satisfies
the Maxwell equation in $\text{D}_\br^{\text{c},s}$ and the
Silver--M\"uller radiation condition.

Zero extension leaves the scalar single layer potential unchanged, so
\eqref{eq:c11-screen-zero-extension-divergence} gives
$\mathsf{SL}_{\Gamma_\br^s}^{(\kappa)}\boldsymbol j
=\mathsf{SL}_{\Gamma_\br}^{(\kappa)}\mathsf E_{\br,X}\boldsymbol j$
in $\mathbb R^3\setminus\Gamma_\br$ for all $\boldsymbol j\in X_\br^s$.
Taking either restricted tangential trace and applying
\eqref{eq:c11-efie-trace-operator} and
\eqref{eq:c11-screen-efie-compression} yields
\begin{equation}
	\gamma_{T,s}^\pm\mathsf{SL}_{\Gamma_\br^s}^{(\kappa)}
	=\mathsf R_{\br,X}\mathsf A_{\Gamma_\br}^{(\kappa)}
	  \mathsf E_{\br,X}
	=\mathsf A_{\Gamma_\br^s}^{(\kappa)}
	\in\mathscr L(X_\br^s,X_\br^{s\prime}).
\end{equation}

Let $\boldsymbol E_{\br,s}^{\mathrm{sc}}$ solve the screen scattering
problem~\eqref{eq:c11-screen-scattering}. The screen Stratton--Chu
representation \cite[Theorem~3.2]{BuffaChristiansen2003} gives
\begin{equation}
	\boldsymbol E_{\br,s}^{\mathrm{sc}}
	=\mathsf{SL}_{\Gamma_\br^s}^{(\kappa)}\boldsymbol j_\br^s
	\quad\text{in }\text{D}_\br^{\text{c},s},
	\label{eq:c11-screen-representation}
	\noeqref{eq:c11-screen-representation}
\end{equation}
where $\boldsymbol j_\br^s\in X_\br^s$ is determined by
\begin{equation*}
	\mathsf E_{\br,X}\boldsymbol j_\br^s
	=\frac{1}{\ii\kappa}
	\bigl(\gamma_\times^+\curl\boldsymbol E_{\br,s}^{\mathrm{sc}}
	      -\gamma_\times^-\curl\boldsymbol E_{\br,s}^{\mathrm{sc}}\bigr)
	\quad\text{in }X_\br.
\end{equation*}
Taking either tangential trace on the screen and using the boundary
condition in~\eqref{eq:c11-screen-scattering} yields the screen EFIE:
for each $\br\in\mathfrak T$, we seek
$\boldsymbol j_\br^s\in X_\br^s$ such that
\begin{equation}
	\mathsf A_{\Gamma_\br^s}^{(\kappa)}\boldsymbol j_\br^s
	=\boldsymbol f_\br^s,
	\quad\boldsymbol f_\br^s\coloneqq-\gamma_{T,s}\boldsymbol E^{\mathrm{inc}}
	\in X_\br^{s\prime}.
	\label{eq:c11-screen-trace-efie}
\end{equation}
Equivalently,
\begin{equation}
	\mathsf a_{\Gamma_\br^s}^{(\kappa)}(\boldsymbol j_\br^s,\boldsymbol v)
	=\dual{\boldsymbol f_\br^s}{\boldsymbol v}{X_\br^{s\prime}\times X_\br^s},
	\quad \forall\boldsymbol v\in X_\br^s.
	\label{eq:c11-screen-variational-efie}
\end{equation}
Conversely, the electric potential associated with any solution of
\eqref{eq:c11-screen-trace-efie} satisfies the screen scattering problem.

Since $\Gamma_\br\setminus\overline{\Gamma_\br^s}$ is a nonempty
relatively open aperture connecting $\text{D}_\br$ with its exterior,
the screen complement $\text{D}_\br^{\text{c},s}$ is connected.
By \cite[Corollary~3.6]{BuffaChristiansen2003},
$\mathsf A_{\Gamma_\br^s}^{(\kappa)}:X_\br^s\to X_\br^{s\prime}$
is a bounded isomorphism, and the scattering problem is
uniquely solvable for all $\kappa>0$.

To study shape dependence, we transport the data and solution of the
EFIE to the reference screen by setting
$\widehat{\boldsymbol f}_\br^s\coloneqq
((\mathcal P_\br^s)^{-1})'\boldsymbol f_\br^s$ and
$\widehat{\boldsymbol j}_\br^s\coloneqq
\mathcal P_\br^s\boldsymbol j_\br^s$.
\Needspace{8\baselineskip}
\begin{corollary}[Shape holomorphy of the screen EFIE solution]
\label{cor:c11-screen-solution-holomorphy}
Let $\mathfrak T$ satisfy
Assumptions~\ref{ass:c11-deformations} and~\ref{ass:c11-screen}.
Then the following assertions hold.
\begin{itemize}
 \item[\normalfont{(i)}] There exists $\delta>0$ such that the map
 \begin{equation}
 	\mathfrak T\to\mathscr L(\widehat X^{s\prime},\widehat X^s):
 	\br\mapsto(\widehat{\mathsf A}_{\br,s}^{(\kappa)})^{-1}
 \end{equation}
 admits a bounded holomorphic extension to $\mathfrak T_\delta$.
 \item[\normalfont{(ii)}] For each $\br\in\mathfrak T$, let
 $\boldsymbol f_\br^s\in X_\br^{s\prime}$
 and assume that $\mathfrak T\ni\br\mapsto\widehat{\boldsymbol f}_\br^s
 \in\widehat X^{s\prime}$ admits a bounded holomorphic extension to
 $\mathfrak T_{\delta_f}$ for some $\delta_f>0$.
 For each $\br\in\mathfrak T$, we seek
 $\boldsymbol j_\br^s\in X_\br^s$ such that
 \begin{equation}
 	\mathsf A_{\Gamma_\br^s}^{(\kappa)}\boldsymbol j_\br^s
	=\boldsymbol f_\br^s.
 \end{equation}
 This problem is well-posed. There exists
 $\delta\in(0,\delta_f]$ such that the map
 \begin{equation}
 	\mathfrak T\to\widehat X^s:\br\mapsto\widehat{\boldsymbol j}_\br^s
 	=(\widehat{\mathsf A}_{\br,s}^{(\kappa)})^{-1}
 	     \widehat{\boldsymbol f}_\br^s
 \end{equation}
 admits a bounded holomorphic extension to $\mathfrak T_\delta$.
\end{itemize}
\end{corollary}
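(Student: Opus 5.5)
The plan is to mirror the proof of Corollary~\ref{cor:c11-closed-solution-holomorphy}, with the closed-surface nonresonance assumption replaced by the unconditional invertibility of the screen EFIE. First, for each $\br\in\mathfrak T$, \cite[Corollary~3.6]{BuffaChristiansen2003} gives $\mathsf A_{\Gamma_\br^s}^{(\kappa)}\in\mathscr L_{\text{\normalfont iso}}(X_\br^s,X_\br^{s\prime})$. This requires only that $\text{D}_\br^{\text{c},s}$ be connected, which holds because of the nonempty aperture guaranteed by Assumption~\ref{ass:c11-screen}. Next, Lemma~\ref{lem:c11-screen-extension} shows that $\mathcal P_\br^s$ and $(\mathcal P_\br^s)^{-1}$ are isomorphisms. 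Since taking duals preserves this property, $\widehat{\mathsf A}_{\br,s}^{(\kappa)}=((\mathcal P_\br^s)^{-1})'\mathsf A_{\Gamma_\br^s}^{(\kappa)}(\mathcal P_\br^s)^{-1}$ is an isomorphism $\widehat X^s\to\widehat X^{s\prime}$ for every real $\br\in\mathfrak T$. This step already gives well-posedness in item~(ii), because $\boldsymbol j_\br^s=(\mathcal P_\br^s)^{-1}(\widehat{\mathsf A}_{\br,s}^{(\kappa)})^{-1}\widehat{\boldsymbol f}_\br^s$.

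Second, Theorem~\ref{thm:c11-screen-holomorphy} (through the fixed compression \eqref{eq:c11-screen-fixed-compression}) gives a bounded holomorphic, and in particular continuous, extension $\br\mapsto\widehat{\mathsf A}_{\br,s}^{(\kappa)}$ on some $\mathfrak T_{\delta_0}$. Inversion is continuous on the open set of isomorphisms, and $\mathfrak T$ is compact, so $M\coloneqq\sup_{\br\in\mathfrak T}\|(\widehat{\mathsf A}_{\br,s}^{(\kappa)})^{-1}\|<\infty$. Bounded holomorphy on $\mathfrak T_{\delta_0}$ also makes the extension uniformly Lipschitz on smaller neighborhoods, for instance by a Cauchy estimate on balls of radius $\delta_0/2$. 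Hence for $\delta$ small and every $\br\in\mathfrak T_\delta$ there is $\widetilde\br\in\mathfrak T$ with $\|\widehat{\mathsf A}_{\br,s}^{(\kappa)}-\widehat{\mathsf A}_{\widetilde\br,s}^{(\kappa)}\|\le 1/(2M)$. A Neumann series, as in the perturbation estimate \cite[Proposition~4.20, item~(i)]{HS21}, then gives invertibility on $\mathfrak T_\delta$ with the uniform bound $2M$.

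Third, inversion is holomorphic on the set of isomorphisms \cite[Proposition~4.20, item~(ii)]{HS21}. Composing it with the holomorphic operator map therefore proves item~(i). For item~(ii), take $\delta\le\delta_f$ and compose with the bounded bilinear evaluation $\mathscr L(\widehat X^{s\prime},\widehat X^s)\times\widehat X^{s\prime}\to\widehat X^s$, as in \cite[Proposition~4.20, item~(iii)]{HS21}. The resulting extension is bounded by $2M\sup_{\mathfrak T_\delta}\|\widehat{\boldsymbol f}_\br^s\|$, and on $\mathfrak T$ it coincides with $\mathcal P_\br^s\boldsymbol j_\br^s$.

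The main obstacle is the uniform invertibility near $\mathfrak T$: it requires passing from pointwise isomorphism on the compact real set to a uniform complex neighborhood. I will handle it with the compactness, continuity, and Neumann-series argument above. Unlike the closed case, no spectral hypothesis enters, because \cite[Corollary~3.6]{BuffaChristiansen2003} holds for all $\kappa>0$.
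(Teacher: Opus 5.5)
Your proposal is correct and follows the paper's proof. You obtain invertibility for real $\br$ from \cite[Corollary~3.6]{BuffaChristiansen2003} transported through the screen Piola isomorphisms of Lemma~\ref{lem:c11-screen-extension}, then combine Theorem~\ref{thm:c11-screen-holomorphy} with the compactness, perturbation, and holomorphic-inversion argument of \cite[Proposition~4.20]{HS21} and the bilinear evaluation for item~(ii); the only difference is that you spell out the uniform perturbation step via a Cauchy estimate and a Neumann series, where the paper cites \cite[Proposition~4.20, item~(i)]{HS21} and continuity of the operator extension.
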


\begin{proof}
The bounded isomorphism
$\mathsf A_{\Gamma_\br^s}^{(\kappa)}:X_\br^s\to X_\br^{s\prime}$
from \cite[Corollary~3.6]{BuffaChristiansen2003} and
Lemma~\ref{lem:c11-screen-extension} imply that
$\widehat{\mathsf A}_{\br,s}^{(\kappa)}$ is boundedly invertible for each
$\br\in\mathfrak T$.
Theorem~\ref{thm:c11-screen-holomorphy} and the compactness and
holomorphic inversion argument used in the proof of
Corollary~\ref{cor:c11-closed-solution-holomorphy}
\cite[Proposition~4.20, items~(i) and~(ii)]{HS21}
give item~(i), with
\begin{equation}
	M_{\delta,s}\coloneqq\sup_{\br\in\mathfrak T_\delta}
	\|(\widehat{\mathsf A}_{\br,s}^{(\kappa)})^{-1}\|_
	      {\mathscr L(\widehat X^{s\prime},\widehat X^s)}<\infty.
\end{equation}
For item~(ii), choose $\delta\leq\delta_f$ within the inverse extension
neighborhood. As in that proof, the bounded bilinear evaluation
$\mathscr L(\widehat X^{s\prime},\widehat X^s)\times
\widehat X^{s\prime}\to\widehat X^s$ gives the asserted bounded
holomorphic solution map
\cite[Proposition~4.20, item~(iii)]{HS21}, with
\begin{equation}
	\sup_{\br\in\mathfrak T_\delta}
	      \|\widehat{\boldsymbol j}_\br^s\|_{\widehat X^s}
	\leq M_{\delta,s}\sup_{\br\in\mathfrak T_\delta}
	      \|\widehat{\boldsymbol f}_\br^s\|_{\widehat X^{s\prime}}
	<\infty.\qedhere
\end{equation}
\end{proof}

We now apply Corollary~\ref{cor:c11-screen-solution-holomorphy} to the
plane wave~\eqref{eq:c11-plane-wave}. The data transported to the
reference screen satisfy
\begin{equation}
	\dual{\widehat{\boldsymbol f}_\br^s}{\widehat{\boldsymbol v}}
	     {\widehat X^{s\prime}\times\widehat X^s}
	=-\dual{\widehat{\mathsf F}_\br\widehat{\mathsf E}_X\widehat{\boldsymbol v}}
	       {\boldsymbol p\exp(\ii\kappa\boldsymbol d\cdot\br)}
	       {\widehat\Gamma},
	\quad\forall\widehat{\boldsymbol v}\in\widehat X^s.
	\label{eq:c11-screen-plane-wave-data}
\end{equation}
Indeed, \eqref{eq:c11-screen-piola-extension} gives
$\widehat{\boldsymbol f}_\br^s
=\widehat{\mathsf R}_X\widehat{\boldsymbol f}_\br$.
Thus \eqref{eq:c11-screen-plane-wave-data} follows from
\eqref{eq:c11-plane-wave-reference-data}.
Since $\widehat{\mathsf R}_X$ is bounded and complex linear,
the holomorphic extension of the plane-wave data established in
Subsection~\ref{sec:c11-closed-solution} yields a bounded holomorphic
extension of the map
$\mathfrak T\ni\br\mapsto\widehat{\boldsymbol f}_\br^s
\in\widehat X^{s\prime}$ to $\mathfrak T_\delta$ for some $\delta>0$.

After decreasing $\delta$ if necessary,
Corollary~\ref{cor:c11-screen-solution-holomorphy}, item~(ii), therefore
shows that the map $\mathfrak T\to\widehat X^s:$
$\br\mapsto\widehat{\boldsymbol j}_\br^s$ admits a bounded holomorphic
extension to $\mathfrak T_\delta$.

\section{Concluding Remarks}
\label{sec:c11-conclusions}

We have established shape holomorphy of the complete
Helmholtz Calder\'on projector on closed $\mathscr C^{1,1}$ surfaces
and of the Maxwell EFIE on closed surfaces and screens of the same
regularity.
Under the stated data and solvability assumptions, the transported
EFIE solution maps admit bounded holomorphic extensions.
For closed surfaces, the electric far-field also admits a bounded
holomorphic extension.
Any fixed bounded linear functional of the transported solution or,
for closed surfaces, of the electric far-field also admits a bounded
holomorphic extension.

For any well-posed boundary integral formulation of a Helmholtz
problem, the transported shape-to-solution map inherits the shape
holomorphy of the corresponding BIO when the transported data depend
holomorphically on the deformation. This follows from holomorphy of
operator inversion \cite[Proposition~4.20]{HS21}.
This argument has been studied extensively for Laplace and Helmholtz
problems in \cite{Henriquez2021,HS21}, for multiple open arcs in
\cite{PHJ24}, and for parametric Helmholtz formulations in
\cite{DolzHenriquez2024}.

Since the holomorphy results are formulated directly in terms of the
boundary transformations, they can be applied to different
parametrizations without repeating the operator analysis.
Their use is therefore not restricted to affine parametrizations,
as in \cite[Section~2.2 and Theorem~6.1]{EscapilJerez2026EFIE},
but also includes nonlinear, possibly nonanalytic parametrizations
of Gevrey regularity, as in
\cite[Section~2]{DjurdjevacKaarniojaSchillingsZepernick2026}.
If the range of the parametrization lies in a compact family of
admissible transformations and its parametric derivative bounds hold
in the $\mathscr C^{1,1}(\widehat\Gamma;\mathbb R^3)$ norm,
composition with the holomorphic operator maps preserves the Gevrey order
\cite[Section~2.4]{HarbrechtSchmidlinSchwab2024}.
Under the stated solvability assumptions, the EFIE solution maps
inherit the same order when the transported data satisfy the
corresponding Gevrey bounds.

Parametric holomorphy obtained by composing the operator and solution
maps with holomorphic parametrizations of the boundary transformations
provides an analytic basis for sparse polynomial and sparse grid approximation
\cite{ChkifaCohenSchwab2014},
higher-order quasi-Monte Carlo integration \cite{DickLeGiaSchwab2016},
neural network approximation \cite{SchwabZech2019}, and reduced
basis methods \cite{CohenDeVore2016}. Dimension-independent rates
additionally require quantitative summability and stability.

Extensions of these results to Helmholtz and Maxwell problems on
closed surfaces and screens of Lipschitz regularity, with precise
control of the dependence of the domain of holomorphy on the
wavenumber $\kappa$, are currently underway.

\section*{Acknowledgments}

The author's work is funded by the Deutsche Forschungsgemeinschaft
(DFG, German Research Foundation) -- Project-ID 258734477 -- SFB 1173
and the Austrian Science Fund (FWF) under project I6667-N.
Funding was also received from the European Research Council (ERC)
under the Horizon Europe research and innovation programme of the
European Union (grant agreement No.~101125225, project ``New Frontiers
in Optimal Adaptivity'', PI: Michael Feischl).

\section*{Declaration of Generative AI Use}
Generative AI was used to assist with editing this manuscript and
reviewing its mathematical arguments. 
All proofs have been checked by the author, who takes full
responsibility for their correctness.

\appendix

\section{Surface Jacobians and Multiplication Operators}
\label{app:c11-jacobian}

\subsection{Proof of Lemma~\ref{lem:c11-jacobian-multiplier}}
\label{sec:proof_c11-jacobian-multiplier}
Fix $\br\in\mathfrak T$ and set
$\boldsymbol a_\br\coloneqq\operatorname{cof}(F_\br)\widehat{\boldsymbol n}$.
By~\eqref{eq:c11-surface-jacobian},
$J_{\text S,\br}=\|\boldsymbol a_\br\|>0$.
Both $F_\br$ and $\widehat{\boldsymbol n}$ are Lipschitz on $\widehat\Gamma$, so
$\boldsymbol a_\br\in\mathscr C^{0,1}(\widehat\Gamma;\IR^3)$.
The inequality $\bigl|\|\boldsymbol a\|-\|\boldsymbol b\|\bigr|
\leq\|\boldsymbol a-\boldsymbol b\|$ gives
\begin{equation}
	\|J_{\text S,\br}\|_{\mathscr C^0(\widehat\Gamma)}
	\leq\|\boldsymbol a_\br\|_{\mathscr C^0(\widehat\Gamma;\mathbb R^3)}
	\quad\text{and}\quad
	|J_{\text S,\br}|_{\mathscr C^{0,1}(\widehat\Gamma)}
	\leq|\boldsymbol a_\br|_{\mathscr C^{0,1}(\widehat\Gamma;\mathbb R^3)}.
\end{equation}
Compactness of $\widehat\Gamma$ gives
$c_\br\coloneqq \min_{\widehat\Gamma}J_{\text S,\br}>0$. In particular,
\begin{equation}
	\|J_{\text S,\br}^{-1}\|_{\mathscr C^0(\widehat\Gamma)}\leq c_\br^{-1}
	\quad\text{and}\quad
	|J_{\text S,\br}^{-1}|_{\mathscr C^{0,1}(\widehat\Gamma)}
	\leq c_\br^{-2}|J_{\text S,\br}|_{\mathscr C^{0,1}(\widehat\Gamma)}.
\end{equation}
For all $m\in\mathscr C^{0,1}(\widehat\Gamma)$ and
$u\in\mathscr C^{0,\alpha}(\widehat\Gamma)$,
the H\"older product estimate gives
\begin{equation}
	\|mu\|_{\mathscr C^{0,\alpha}(\widehat\Gamma)}
	\leq C_{\alpha,\widehat\Gamma}
	\|m\|_{\mathscr C^{0,1}(\widehat\Gamma)}
	\|u\|_{\mathscr C^{0,\alpha}(\widehat\Gamma)}.
\end{equation}
Multiplication by $m$ is also bounded on
$L^2(\widehat\Gamma)$ and $H^1(\widehat\Gamma)$, with
\begin{equation}
	\|mu\|_{L^2(\widehat\Gamma)}
	\leq\|m\|_{\mathscr C^0(\widehat\Gamma)}\|u\|_{L^2(\widehat\Gamma)}
	\quad\text{and}\quad
	\|mu\|_{H^1(\widehat\Gamma)}
	\leq C_{\widehat\Gamma}\|m\|_{\mathscr C^{0,1}(\widehat\Gamma)}
	       \|u\|_{H^1(\widehat\Gamma)}.
\end{equation}
By interpolation \cite[Theorem~B.11]{McLean2000},
we obtain, for $0<s<1$ and all $u\in H^s(\widehat\Gamma)$,
\begin{equation}
	\|mu\|_{H^s(\widehat\Gamma)}
	\leq C_{s,\widehat\Gamma}\|m\|_{\mathscr C^{0,1}(\widehat\Gamma)}
	            \|u\|_{H^s(\widehat\Gamma)}.
\end{equation}
For $0<s\leq1$ and all $g\in H^{-s}(\widehat\Gamma)$, we define multiplication
by duality as
\begin{equation}
	\dual{mg}{\phi}{\widehat\Gamma}
	\coloneqq \dual{g}{m\phi}{\widehat\Gamma},
	\quad \forall\phi\in H^s(\widehat\Gamma).
\end{equation}
Multiplication on $H^{-s}(\widehat\Gamma)$ is therefore the
dual of multiplication on $H^s(\widehat\Gamma)$.
It has the same operator norm, so the preceding multiplier estimate
also gives the required bound on $H^{-s}(\widehat\Gamma)$.
Taking $m=J_{\text S,\br}$ and $m=J_{\text S,\br}^{-1}$
proves the asserted multiplier bounds.

\subsection{Proof of Lemma~\ref{lem:c11-geometric-holomorphy}}
\label{sec:proof_c11-geometric-holomorphy}
The map $\mathscr C^{1,1}(\widehat\Gamma;\mathbb C^3)\ni\br\mapsto
F_\br\in\mathscr C^{0,1}(\widehat\Gamma;\mathbb C^{3\times3})$
is bounded and complex linear. Hence
$\boldsymbol a_\br\coloneqq
\operatorname{cof}(F_\br)\widehat{\boldsymbol n}$
depends polynomially on $\br$ with values in
$\mathscr C^{0,1}(\widehat\Gamma;\mathbb C^3)$.
Fix $\br_0\in\mathfrak T$. For complex-valued $\br$ near $\br_0$, set
\begin{equation}
\begin{aligned}
 J_{\text S,\br}
 &\coloneqq\sqrt{\boldsymbol a_\br\cdot\boldsymbol a_\br},
 \\
 F_\br^{-1}
 &\coloneqq
 (F_\br^\top F_\br+
 \widehat{\boldsymbol n}\widehat{\boldsymbol n}^\top)^{-1}
 F_\br^\top,
\end{aligned}
\label{eq:c11-complex-geometric-coefficients}
\end{equation}
and set $J_{\text S,\br}^{-1}\coloneqq(J_{\text S,\br})^{-1}$.
We also set
\begin{equation}
 \boldsymbol N_\br\coloneqq
 \frac{(\mathsf I_3-F_\br F_\br^{-1})\boldsymbol N_{\br_0}}
 {\sqrt{\boldsymbol N_{\br_0}\cdot
 (\mathsf I_3-F_\br F_\br^{-1})\boldsymbol N_{\br_0}}}.
\end{equation}
The dot products are bilinear, and the transposes do not involve
complex conjugation. Both square roots are chosen positive at $\br_0$.

At $\br_0$, the first square-root argument is $J_{\text S,\br_0}^2$,
which has a positive minimum on $\widehat\Gamma$, and the second is
identically one. The matrix to be inverted is uniformly positive
definite. Square roots, the matrix inverse, and the reciprocal of
$J_{\text S,\br}$ therefore depend holomorphically on the coefficients
near their values at $\br_0$.
Pointwise multiplication is continuous in
$\mathscr C^{0,1}(\widehat\Gamma)$ and
$\mathscr C^{0,1}(\widehat\Gamma;\mathbb C^{3\times3})$.
The formulas are holomorphic on a sufficiently small ball in
$\mathscr C^{1,1}(\widehat\Gamma;\mathbb C^3)$.
Since $\br_0\in\mathfrak T$, we may shrink this ball so that every
$\br$ in its intersection with
$\mathscr C^{1,1}(\widehat\Gamma;\mathbb R^3)$ is a
$\mathscr C^{1,1}$ diffeomorphism from $\widehat\Gamma$ onto
$\br(\widehat\Gamma)$.
For such transformations, the formulas give the surface Jacobian and
tangential inverse; the normal formula agrees with
$\boldsymbol n_\br\circ\br$, since
$(\boldsymbol n_\br\circ\br)\cdot\boldsymbol N_{\br_0}>0$
by continuity.

As in the proof of Proposition~\ref{prop:c11-singlelayer-holomorphy},
\cite[Theorem~5.34]{DudleyNorvaisa2011} shows that the extensions agree
on the overlaps. Combining a finite covering argument with the
compactness of $\mathfrak T$, we obtain bounded holomorphic extensions
of $J_{\text S,\br}$, $J_{\text S,\br}^{-1}$, $F_\br^{-1}$, and
$\boldsymbol N_\br$ to a common $\mathfrak T_\delta$, for some
$\delta>0$.

\section{Pullback Operators}
\label{app:c11-pullbacks}

\begin{proof}[Proof of Lemma~\ref{lem:c11-pullback-isomorphisms}]
Fix $\br\in\mathfrak T$.
By change of variables and the positive minimum and finite maximum
of $J_{\text S,\br}$ on $\widehat\Gamma$, we obtain
\begin{equation}
	\|\tau_\br f\|_{L^2(\widehat\Gamma)}
	\leq C_\br\|f\|_{L^2(\Gamma_\br)}
	\quad\text{and}\quad
	\|\tau_\br^{-1}\widehat f\|_{L^2(\Gamma_\br)}
	\leq C_\br\|\widehat f\|_{L^2(\widehat\Gamma)},
\end{equation}
for all $f\in L^2(\Gamma_\br)$ and
$\widehat f\in L^2(\widehat\Gamma)$.
Recalling the tangential inverse $F_\br^{-1}$,
the tangential chain rule gives, for all
$f\in\mathscr C^1(\Gamma_\br)$ and
$\widehat f\in\mathscr C^1(\widehat\Gamma)$,
\begin{equation}
	\nabla_{\widehat\Gamma}(\tau_\br f)
	=F_\br^\top\tau_\br(\nabla_{\Gamma_\br}f)
	\quad\text{and}\quad
	\tau_\br\bigl(\nabla_{\Gamma_\br}
	                   (\tau_\br^{-1}\widehat f)\bigr)
	=F_\br^{-\top}\nabla_{\widehat\Gamma}\widehat f.
\end{equation}
Using these chain rules, the boundedness of $F_\br$ and $F_\br^{-1}$,
and the preceding estimates, we obtain
\begin{equation}
	\|\tau_\br f\|_{H^1(\widehat\Gamma)}
	\leq C_\br\|f\|_{H^1(\Gamma_\br)}
	\quad\text{and}\quad
	\|\tau_\br^{-1}\widehat f\|_{H^1(\Gamma_\br)}
	\leq C_\br\|\widehat f\|_{H^1(\widehat\Gamma)}.
\end{equation}
By density, these identities and bounds hold for all
$f\in H^1(\Gamma_\br)$ and
$\widehat f\in H^1(\widehat\Gamma)$.
By interpolation \cite[Theorem~B.11]{McLean2000}, these bounds give
boundedness of $\tau_\br$ and
$\tau_\br^{-1}$ between $H^s(\Gamma_\br)$ and
$H^s(\widehat\Gamma)$ for $0<s<1$.
The identities $\tau_\br^{-1}\tau_\br f=f$ and
$\tau_\br\tau_\br^{-1}\widehat f=\widehat f$ hold for
$f\in H^s(\Gamma_\br)$ and $\widehat f\in H^s(\widehat\Gamma)$,
since these spaces are contained in $L^2(\Gamma_\br)$ and
$L^2(\widehat\Gamma)$, respectively.

For $f\in\mathscr C^{0,\alpha}(\Gamma_\br)$,
\begin{equation}
	|f\circ\br|_{\mathscr C^{0,\alpha}(\widehat\Gamma)}
	\leq\operatorname{Lip}(\br)^\alpha
	      |f|_{\mathscr C^{0,\alpha}(\Gamma_\br)}.
\end{equation}
The seminorm estimate gives boundedness of $\tau_\br$ from
$\mathscr C^{0,\alpha}(\Gamma_\br)$ to
$\mathscr C^{0,\alpha}(\widehat\Gamma)$.
The two surface gradient chain rules give boundedness from
$\mathscr C^{1,\alpha}(\Gamma_\br)$ to
$\mathscr C^{1,\alpha}(\widehat\Gamma)$, using the Lipschitz coefficients
$F_\br$ and $F_\br^{-1}$.
Applying these estimates with $\br^{-1}$ and the boundaries
interchanged gives the bounds for $\tau_\br^{-1}$.

For $-1\leq s<0$, consider the maps
$\widehat\phi\mapsto\tau_\br^{-1}
(J_{\text S,\br}^{-1}\widehat\phi)$ from
$H^{-s}(\widehat\Gamma)$ to $H^{-s}(\Gamma_\br)$ and
$\phi\mapsto J_{\text S,\br}\tau_\br\phi$ in the opposite direction.
Composing these two maps cancels the Jacobian factors and the
pullbacks, so each composition is the identity.
Their boundedness between $H^{-s}(\widehat\Gamma)$ and
$H^{-s}(\Gamma_\br)$ follows from the
positive-order pullback bounds and
Lemma~\ref{lem:c11-jacobian-multiplier}.
By~\eqref{eq:c11-distribution-pullbacks}, $\tau_\br$ and $\tau_\br^{-1}$
on $H^s(\Gamma_\br)$ and $H^s(\widehat\Gamma)$
are the duals of these maps. They are therefore bounded and
mutually inverse.
All constants may depend on $\br$,
but not on the functions to which the operators are applied.
\end{proof}

\section{Proof of Lemma~\ref{lem:c11-screen-scalar-operators}}
\label{app:c11-screen-scalar}

Fix $S\in\{\Gamma_\br^s,\widehat\Gamma^s\}$ with supporting boundary
$\Gamma$, and omit hats and deformation subscripts temporarily.

For all $u\in H^{\frac12}(S)$ and every extension
$v\in H^{\frac12}(\Gamma)$ with $\mathsf R^{(D)}v=u$, the multiplier
estimate of Appendix~\ref{app:c11-jacobian}, applied on $\Gamma$, gives
\begin{equation}
	\|(m|_S)u\|_{H^{\frac12}(S)}
	\leq\|mv\|_{H^{\frac12}(\Gamma)}
	\leq C_\Gamma\|m\|_{\mathscr C^{0,1}(\Gamma)}
		\|v\|_{H^{\frac12}(\Gamma)}.
\end{equation}
Taking the infimum over all such $v$ proves the multiplier bound
on $H^{\frac12}(S)$.
Restriction gives
$\mathsf R^{(D)}(mv)=(m|_S)\mathsf R^{(D)}v$
for all $v\in H^{\frac12}(\Gamma)$.
The dual multiplier is multiplication by the same function $m$
and has the same operator norm. Taking duals, we obtain
\begin{equation}
	\mathsf E^{(N)}\bigl((m|_S)f\bigr)
	=m\,\mathsf E^{(N)}f,
	\quad\forall f\in\widetilde H^{-\frac12}(S),
\end{equation}
which proves item~(i).

Composition with $\br$ gives
$\tau_\br^s\mathsf R_\br^{(D)}=\widehat{\mathsf R}^{(D)}\tau_\br$.
In particular, the screen pullback is independent of the chosen
extension to the supporting boundary.
For all $f\in H^{\frac12}(\Gamma_\br^s)$, the quotient norm gives
\begin{equation}
	\|\tau_\br^s f\|_{H^{\frac12}(\widehat\Gamma^s)}
	\leq\inf_{\substack{v\in H^{\frac12}(\Gamma_\br)\\
		\mathsf R_\br^{(D)}v=f}}
		\|\tau_\br v\|_{H^{\frac12}(\widehat\Gamma)}
	\leq C_\br\|f\|_{H^{\frac12}(\Gamma_\br^s)}.
\end{equation}
Interchanging $\Gamma_\br^s$ and $\widehat\Gamma^s$ and replacing
$\br$ by $\br^{-1}$ gives
$\|(\tau_\br^s)^{-1}\widehat f\|_{H^{\frac12}(\Gamma_\br^s)}
\leq C_\br\|\widehat f\|_{H^{\frac12}(\widehat\Gamma^s)}$
for all $\widehat f\in H^{\frac12}(\widehat\Gamma^s)$.
On the negative-order spaces,
\eqref{eq:c11-screen-distribution-pullbacks} identifies
$\tau_\br^s$ and $(\tau_\br^s)^{-1}$ with the duals of
the operators $(\tau_\br^s)^{-1}J_{\text S,\br}^{-1}$ and
$J_{\text S,\br}\tau_\br^s$, respectively.
These operators act from $H^{\frac12}(\widehat\Gamma^s)$ to
$H^{\frac12}(\Gamma_\br^s)$ and in the opposite direction, respectively.
The Jacobian factors and pullbacks cancel in both compositions,
so the two operators are inverses.
The bounds on $\tau_\br^s$ and $(\tau_\br^s)^{-1}$ just proved,
together with the multiplier bound in item~(i), give their
boundedness between $H^{\frac12}(\widehat\Gamma^s)$ and
$H^{\frac12}(\Gamma_\br^s)$.
Hence $\tau_\br^s$ and $(\tau_\br^s)^{-1}$ are bounded and mutually
inverse between $\widetilde H^{-\frac12}(\Gamma_\br^s)$ and
$\widetilde H^{-\frac12}(\widehat\Gamma^s)$.

Finally, for all $f\in\widetilde H^{-\frac12}(\Gamma_\br^s)$ and
$\widehat\phi\in H^{\frac12}(\widehat\Gamma)$, the duality formulas
\eqref{eq:c11-distribution-pullbacks} and
\eqref{eq:c11-screen-distribution-pullbacks},
the identity $\tau_\br^s\mathsf R_\br^{(D)}
=\widehat{\mathsf R}^{(D)}\tau_\br$ on $H^{\frac12}(\Gamma_\br)$,
and commutation of restriction with multiplication by
$J_{\text S,\br}^{-1}$ give
\begin{equation}
	\begin{aligned}
	\langle\tau_\br\mathsf E_\br^{(N)}f,\widehat\phi\rangle_{\widehat\Gamma}
	&=\bigl\langle f,(\tau_\br^s)^{-1}
		\widehat{\mathsf R}^{(D)}
		(J_{\text S,\br}^{-1}\widehat\phi)\bigr\rangle_{\Gamma_\br^s}\\
	&=\langle\tau_\br^s f,\widehat{\mathsf R}^{(D)}\widehat\phi
		\rangle_{\widehat\Gamma^s}\\
	&=\langle\widehat{\mathsf E}^{(N)}\tau_\br^s f,\widehat\phi
		\rangle_{\widehat\Gamma}.
	\end{aligned}
\end{equation}
This proves $\tau_\br\mathsf E_\br^{(N)}
=\widehat{\mathsf E}^{(N)}\tau_\br^s$ in
\eqref{eq:c11-screen-scalar-transport}, completing item~(ii).

\section{Lax's Lemma}
\label{app:c11-operator-extension}

We recall Lax's lemma in the form given in
\cite[Theorem~4.13]{Kress2014}, used in
Lemma~\ref{lem:c11-sobolev-extension}.

\begin{theorem}
\label{thm:c11-operator-extension}
Let $X$ and $Y$ be complex normed spaces with inner products
$(\cdot,\cdot)_X$ and $(\cdot,\cdot)_Y$. Let
$\|u\|_{X,0}\coloneqq(u,u)_X^{1/2}$ and
$\|v\|_{Y,0}\coloneqq(v,v)_Y^{1/2}$, and assume that
\begin{equation}
	\|u\|_{X,0}\leq c_X\|u\|_X
	\quad\text{and}\quad
	\|v\|_{Y,0}\leq c_Y\|v\|_Y,
	\quad\forall u\in X,\ \forall v\in Y,
\end{equation}
for some $c_X,c_Y>0$.
Suppose that $A\in\mathscr L(X,Y)$ and
$B\in\mathscr L(Y,X)$ satisfy
\begin{equation}
	(Au,v)_Y=(u,Bv)_X,
	\quad\forall u\in X,\ \forall v\in Y.
\end{equation}
Then
\begin{equation}
	\|Au\|_{Y,0}^2
	\leq\|A\|_{\mathscr L(X,Y)}
	     \|B\|_{\mathscr L(Y,X)}\|u\|_{X,0}^2,
	\quad\forall u\in X.
\end{equation}
Consequently, $A$ has a unique bounded extension between the
Hilbert completions of $X$ and $Y$ in their inner product norms,
with operator norm at most
$\bigl(\|A\|_{\mathscr L(X,Y)}
\|B\|_{\mathscr L(Y,X)}\bigr)^{1/2}$.
\end{theorem}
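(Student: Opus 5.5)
The plan is the classical Lax argument: pass from $A$ to the operator $M\coloneqq BA\in\mathscr L(X,X)$, which is symmetric for $(\cdot,\cdot)_X$. Then bound $M$ in the weak norm $\|\cdot\|_{X,0}$ by an iterated Cauchy--Schwarz inequality, and finally transfer the bound back to $A$.

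\emph{Reduction to $M$.} For $u\in X$, the adjoint relation gives
\begin{equation*}
\|Au\|_{Y,0}^2=(Au,Au)_Y=(u,BAu)_X\leq\|u\|_{X,0}\,\|Mu\|_{X,0}.
\end{equation*}
So it suffices to prove $\|Mu\|_{X,0}\leq\|M\|_{\mathscr L(X,X)}\|u\|_{X,0}$, together with $\|M\|_{\mathscr L(X,X)}\leq\|A\|_{\mathscr L(X,Y)}\|B\|_{\mathscr L(Y,X)}$. Next I check that $M$ is symmetric: for $u,w\in X$,
\begin{equation*}
(Mu,w)_X=\overline{(w,BAu)_X}=\overline{(Aw,Au)_Y}=(Au,Aw)_Y=(u,BAw)_X=(u,Mw)_X.
\end{equation*}

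\emph{Iteration.} Symmetry gives, for every $v\in X$,
\begin{equation*}
\|Mv\|_{X,0}^2=(v,M^2v)_X\leq\|v\|_{X,0}\|M^2v\|_{X,0}.
\end{equation*}
Applying this with $v=M^{2^k-1}u$ and inducting on $n$, I expect to obtain
\begin{equation*}
\|Mu\|_{X,0}^{2^n}\leq\|u\|_{X,0}^{2^n-1}\,\|M^{2^n}u\|_{X,0}
\end{equation*}
for $u\ne0$. The case $\|u\|_{X,0}=0$ forces $u=0$, since $(\cdot,\cdot)_X$ is an inner product. The right-hand factor is then bounded in the strong norm:
\begin{equation*}
\|M^{2^n}u\|_{X,0}\leq c_X\|M\|^{2^n}\|u\|_X.
\end{equation*}
Taking $2^n$-th roots and letting $n\to\infty$, the factors $c_X^{2^{-n}}$, $\|u\|_X^{2^{-n}}$ and $\|u\|_{X,0}^{-2^{-n}}$ all tend to $1$. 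This yields $\|Mu\|_{X,0}\leq\|M\|\,\|u\|_{X,0}$. Inserting this into the reduction step gives the asserted inequality.

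\emph{Extension.} The inequality shows that $A$ is bounded from $(X,\|\cdot\|_{X,0})$ to $(Y,\|\cdot\|_{Y,0})$, with norm at most $(\|A\|\,\|B\|)^{1/2}$. Since $X$ is dense in its Hilbert completion, $A$ extends uniquely by continuity, with the same norm bound.

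The main obstacle is the induction step. One must keep track of the exponents correctly and make sure that no division by a vanishing quantity occurs. If $\|M^{2^k}u\|_{X,0}=0$ for some $k$, symmetry gives $Mu=0$, and the claim is trivial. Otherwise all the quantities involved are positive and the division is legitimate.
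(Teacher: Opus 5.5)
Your route is the standard proof of Lax's lemma: pass to the $(\cdot,\cdot)_X$-symmetric operator $M=BA$, iterate Cauchy--Schwarz, and let the strong-norm factors disappear under $2^n$-th roots. This is the argument behind the cited \cite[Theorem~4.13]{Kress2014}; the paper only quotes the statement and gives no proof of its own. The reduction step, the symmetry of $M$, the final limit, and the extension by density are all correct.

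\textbf{One step does not work as written.} Substituting $v=M^{2^k-1}u$ into $\|Mv\|_{X,0}^2\leq\|v\|_{X,0}\|M^2v\|_{X,0}$ gives
$\|M^{2^k}u\|_{X,0}^2\leq\|M^{2^k-1}u\|_{X,0}\|M^{2^k+1}u\|_{X,0}$,
which does not close the doubling induction.

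\textbf{The fix for the doubling induction.} Use the symmetry of $M^{2^k}$ directly:
$\|M^{2^k}u\|_{X,0}^2=(u,M^{2^{k+1}}u)_X\leq\|u\|_{X,0}\|M^{2^{k+1}}u\|_{X,0}$.
Square the case $n=k$ of your claimed inequality and insert this bound; you get the case $n=k+1$. No division occurs anywhere in the induction, so the case distinction on vanishing norms is unnecessary.

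\textbf{An alternative fix.} Apply your inequality with $v=M^{j-1}u$ for \emph{every} $j\geq1$. Then $j\mapsto\|M^ju\|_{X,0}$ is log-convex, which gives $\|Mu\|_{X,0}^m\leq\|u\|_{X,0}^{m-1}\|M^mu\|_{X,0}$ for all $m$. In this version you do divide by the norms. Your remark that $M^mu=0$ forces $Mu=0$ is exactly what handles the zero case there.
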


\bibliographystyle{amsplain}
\bibliography{references}
\end{document}